\documentclass[11pt]{article}
\usepackage{amsmath,amsfonts,amssymb,amsthm,amscd,color}

\title{The resolvent cocycle in twisted cyclic cohomology and a local index formula for the Podle\'s sphere}

\author{Adam Rennie, Roger Senior
\thanks{email: \texttt{adam.rennie@anu.edu.au},\ \texttt{roger.senior@anu.edu.au}}
 \\[6pt]
Mathematical Sciences Institute,
Australian National University\\
Acton, ACT, 0200, Australia\\[6pt]
}


\topmargin=0pt
\advance\topmargin by -\headheight
\advance\topmargin by -\headsep
\textheight=8.9in  
\oddsidemargin=15pt
\evensidemargin=\oddsidemargin
\marginparwidth=0.5in
\textwidth=6.5in  
\parindent=0.0pt

\makeatletter
\def\section{\@startsection{section}{1}{\z@}{-3.5ex plus -1ex minus
  -.2ex}{2.3ex plus .2ex}{\large\bf}}
\def\subsection{\@startsection{subsection}{2}{\z@}{-3.25ex plus -1ex
  minus -.2ex}{1.5ex plus .2ex}{\normalsize\bf}}
\makeatother

\numberwithin{equation}{section} 

\theoremstyle{plain} 
\newtheorem{thm}{Theorem}[section]
\newtheorem{lemma}[thm]{Lemma}
\newtheorem{prop}[thm]{Proposition}
\newtheorem{corl}[thm]{Corollary}

\theoremstyle{definition} 
\newtheorem{defn}[thm]{Definition}


\DeclareMathOperator{\res}{Res} 
\DeclareMathOperator{\Tr}{Tr}     

\newcommand{\s}{\sigma}      

\newcommand{\A}{\mathcal{A}}  
\newcommand{\B}{\mathcal{B}}  
\newcommand{\C}{\mathbb{C}}   
\newcommand{\D}{\mathcal{D}}  
\renewcommand{\H}{\mathcal{H}}  
\newcommand{\K}{\mathcal{K}}   
\newcommand{\cm}{\mathcal{M}} 
\newcommand{\cn}{\mathcal{N}} 
\newcommand{\N}{\mathbb{N}}   
\newcommand{\R}{\mathbb{R}}   
\newcommand{\T}{\mathbb{T}}  
\newcommand{\Z}{\mathbb{Z}}   

\newcommand{\ox}{\otimes}     

\newcommand{\bma}{\left(\begin{array}{cc}}
\newcommand{\ema}{\end{array}\right)}
\newcommand{\bca}{\left(\begin{array}{c}}
\newcommand{\eca}{\end{array}\right)}

\newcommand{\la}{\langle}
\newcommand{\ra}{\rangle}

\newcommand{\p}{\partial}
\newcommand{\cM}{{\mathcal M}}

\newcommand{\half}{\tfrac{1}{2}} 

\newcommand{\hideqed}{\renewcommand{\qed}{}} 

\newcommand{\enveq}[1]{\begin{equation}#1\end{equation}}
\newcommand{\enveqn}[1]{\begin{equation*}#1\end{equation*}}
\newcommand{\enval}[1]{\begin{align}#1\end{align}}
\newcommand{\envaln}[1]{\begin{align*}#1\end{align*}}
\newcommand{\cA}{\mathcal{A}}
\newcommand{\cB}{\mathcal{B}}
\newcommand{\cD}{\mathcal{D}}
\newcommand{\cH}{\mathcal{H}}
\newcommand{\cK}{\mathcal{K}}
\newcommand{\cN}{\mathcal{N}}
\newcommand{\cO}{\mathcal{O}}
\newcommand{\cP}{\mathcal{P}}
\newcommand{\cU}{\mathcal{U}}
\newcommand{\bR}{\mathbb{R}}
\newcommand{\bC}{\mathbb{C}}
\newcommand{\bZ}{\mathbb{Z}}
\newcommand{\bN}{\mathbb{N}}
\newcommand{\bT}{\mathbb{T}}

\newcommand{\re}[3]{t_{#2, #3}^{#1}}

\DeclareMathOperator{\Ind}{\mathrm{Ind}} 

\newcommand{\wop}{{\rm w{\text -}OP}}

\hyphenation{ho-meo-morph-ism ho-meo-morph-isms}


\begin{document}

\maketitle

\vspace{-8pt}

\begin{abstract}
%
%
%
%

We continue the investigation of twisted homology theories in the context of dimension drop phenomena.
This work unifies previous equivariant index calculations in twisted cyclic cohomology. We do this by proving the existence of the resolvent cocycle, 
a finitely summable analogue of the JLO cocycle, under
weaker smoothness hypotheses and in the more general setting of `modular' spectral triples.
As an application we show
that using our twisted resolvent cocycle, we can obtain a local index formula for the Podle\'s sphere. The resulting
twisted cyclic cocycle has non-vanishing Hochschild class which is in dimension 2.

\end{abstract}


\parskip=0.08in

\section{Introduction}
\label{sec:intro}
This paper proves a residue index formula in noncommutative geometry for  `modular spectral triples', which are analogues of spectral 
triples with twisted traces. This is the appropriate setting for examples
arising from $q$-deformations which typically experience `dimension drop' in homology, \cite{H,HK,NT,SW,W}. The main results are as follows.

1) We show that for finitely summable modular spectral triples the resolvent cocycle exists, is continuous
and is an index cocycle under weaker smoothness conditions than have previously been used. 
In particular we do not need the pseudodifferential
calculus to establish these facts, so that we are free to replace the usual pseudodifferential calculus by other schemes later, in order to obtain 
local index formulae.

2) We show that  modular spectral triples have a well-defined pairing with equivariant $K$-theory. In
the finitely summable and weakly smooth case we show that this pairing can be computed using the resolvent cocycle,
which defines a twisted cyclic cocycle.

3) We apply the results of 1) and 2) to prove a local index formula for the Podle\'s sphere
in twisted cyclic cohomology. This index formula puts the results of several authors into a common framework, \cite{H,KW,W}.
In particular, the twisted Hochschild class of our residue cocycle is an explicit constant multiple of the fundamental Hochschild cocycle
for the Podle\'s sphere, \cite{H,KW}, and our explicit index pairings can be compared to those in \cite{W}.

The computations in 3) are similar to what was done in \cite{NT}, 
however they used the twisting by the modular
automorphism, rather than the inverse of the modular automorphism. While the summability
is the same in both cases, the twisted Hochschild homology for the modular automorphism is trivial
in dimension 2, while the inverse of the modular automorphism avoids the dimension drop. 
Thus the cocycle obtained in \cite{NT} is cohomologous to a $0$-cocycle, while ours is not. We 
also note
that in \cite{NT} the starting point was the JLO cocycle in entire cyclic cohomology rather than 
the resolvent cocycle.

The exposition is as follows.
In Section \ref{sec:KMS-index} we introduce the basic definitions for modular spectral triples, including smoothness and summability. 
We then show that a modular spectral triple defines an equivariant $KK$-class, and so gives us a well-posed $K$-theory valued index problem.
The remainder of Section 2 demonstrates
that together with a representative of an equivariant $K$-theory class, we obtain a well-posed numerical index problem. The aim of Section
\ref{sec:local-index} is then to show that these notions are compatible.


We address the existence, continuity and index properties of the resolvent cocycle in Section \ref{sec:local-index}.
We begin by looking at our weak smoothness condition, and
proving some basic results that follow from this assumption. Then we prove the existence and continuity of the resolvent cocycle, which
originated in \cite{CPRS2}, and show that it computes the numerical index. Finally we show, using results from \cite{KNR}, that this numerical index
is compatible with the $K$-theory valued index in a precise way.

In section \ref{sec:pods-chern} we show that the spectral triple introduced by \cite{DS} defines a 2-dimensional modular spectral triple, 
which is weakly smooth in our sense. Numerous results of \cite{KW,NT,SW,W} are incorporated into this statement.
We employ Neshveyev and Tuset's modification of the 
pseudodifferential calculus to obtain a version of the local index formula for the
Podle\'s sphere. Thus we see that with a suitable pseudodifferential calculus, our resolvent index formula can be extended to a 
full local index formula as in \cite{CPRS2,CM,Hig}. We conclude by computing some explicit index pairings, and as a corollary see that the 
degree two term in the residue index cocycle is not a coboundary.

{\bf Acknowledgements.} It is a pleasure to acknowledge the assistance of our 
colleagues Alan Carey, Ulrich Kr\"ahmer and Joe V\'{a}rilly. Both authors were supported by
the Australian Research Council.

\section{Modular spectral triples and equivariant $K$-theory}
\label{sec:KMS-index}

We begin this section by defining modular spectral triples, a generalisation of semifinite spectral triples, \cite{BeF,CP2,CPRS2}, allowing for 
twisted traces (weights) in place of traces. We then consider the index pairings defined by modular spectral triples.

The strategy to study index pairings is almost the same as in \cite{CPRS2,CPRS3}. Given a representative of an equivariant $K$-theory class for
an algebra $\A$, we show that a modular spectral triple over $\A$ allows us to formulate a well-defined 
(semifinite) index problem. By following the strategy of \cite{CPRS2,CPRS3}, we find that the index can be computed  
by pairing a cocycle with the Chern character of the $K$-theory class. 
\subsection{Modular spectral triples}\label{subsec:mod-specs}

Let $\cn$ be a semifinite von Neumann algebra acting on a Hilbert space $\H$,
and fix a faithful normal semifinite weight $\phi$. We denote the modular automorphism group of $\phi$ by $\sigma^\phi_t$. 
Then as $\phi$ is $\s^\phi_t$ invariant,
we see that for all $T\in{\rm dom}\,\phi\subset \cn$ and $t\in\R$
$$
\phi(T)=\phi(\s^\phi_t(T)).
$$
Suppose further that 
the modular group $\s^\phi_t$, which is inner since $\cn$ is semifinite,  is periodic,
and let $\alpha$ be the (least) period of $\s^\phi_t$. Then 
$$
\phi(T)=\frac{1}{\alpha}\int_0^\alpha \phi(\s^\phi_t(T))dt
=\phi\left(\frac{1}{\alpha}\int_0^\alpha \s^\phi_t(T)dt\right)=:(\phi\circ \Psi)(T),
$$
where $\Psi:\cn\to\cm:=\cn^{\s^{\phi}}$ is the expectation onto the 
fixed point algebra $\cm$ defined by the integral. Then the restriction of $\phi$ to $\cm$
is a faithful normal trace. The restriction of $\phi$ to $\cm$ is also semifinite if and only if
$\phi$ is {\em strictly} semifinite, meaning that $\phi$ is the sum of normal positive linear 
functionals whose supports are mutually orthogonal, \cite[p 105]{T}. In everything that follows, we suppose 
that $\phi$ is strictly semifinite.

Given a faithful normal semifinite trace $\tau$ on a von Neumann algebra $\cn$, we define the ideal of $\tau$-compact operators $\K(\cn,\tau)$
 to be the norm closure of the ideal generated by the projections $p$ with finite trace, $\tau(p)<\infty$.

\begin{defn}
Let $\cn$ be a semifinite von Neumann algebra acting on a Hilbert space $\H$,
and fix a faithful normal strictly semifinite weight $\phi$. Suppose further that 
the modular group $\s^\phi_t$ is periodic. Then we say that $(\A,\H,\D)$ is a 
{\bf unital modular spectral triple} with respect to $(\cn,\phi)$ if

0) $\A$ is a separable unital $*$-subalgebra of $\cn$ with norm closure $A$; 

1) $\A$ is invariant under $\s^\phi$, $\A$ consists of analytic vectors for $\s^\phi$, and $\s^\phi|_{A}$ is a strongly continuous action;

2) $\D$ is a self-adjoint operator affiliated to the fixed point algebra
$\cm:=\cn^{\s^\phi}$;

3) $[\D,a]$ extends to a bounded operator in $\cn$ for all $a\in\A$;

4) $(1+\D^2)^{-1/2}\in \K(\cm,\phi|_\cm)$.

The triple is even if there exists $\gamma=\gamma^*\in\cm$ with $\gamma^2=1$, 
$\gamma a=a\gamma$ for all $a\in\A$ and $\gamma\D+\D\gamma=0$. Otherwise
the triple is odd.

We say that the triple is finitely summable with spectral dimension $p\geq 1$ if $p$
is the least number such that
$$
\phi((1+\D^2)^{-s/2})<\infty\ \ \ \ \mbox{for all} \ \ \Re(s)>p.
$$
\end{defn}

Just as for ordinary spectral triples, there is a notion of smoothness and pseudodifferential operators for $QC^\infty$
modular spectral triples, just as  in \cite{CPRS2, CM}, which we recall here.

\begin{defn}\label{qck} A modular spectral triple $(\A,\H,\D)$ relative to $(\cn,\phi)$ is $QC^k$ 
for $k\geq 1$ 
($Q$ for quantum) if for all $a\in\A$ 
the operators $a$ and $[\D,a]$ are in the domain of $\delta^k_1$, where 
$\delta_1(T)=[(1+\D^2)^{1/2},T]$ is the partial derivation on $\cn$ defined by $(1+\D^2)^{1/2}$. We 
say that 
$(\A,\H,\D)$ is $QC^\infty$ if it is $QC^k$ for all $k\geq 1$. Equivalently, \cite[Proposition 6.5]{CPRS2} and \cite[Lemma B.2]{CM},
$(\A,\H,\D)$ is $QC^\infty$ if for all $a\in\A$ we have
$a,\,[\D,a]\in\bigcap_{k,l\geq 0}{\rm dom}L_1^k\circ R_1^l$, where $L,\,R$ are defined by
$$
L(T)=(1+\D^2)^{-1/2}[\D^2,T]\quad{\rm and}\quad R(T)=[\D^2,T](1+\D^2)^{-1/2}.
$$
\end{defn}

\begin{defn} Let $(\A,\H,\D)$ be a modular spectral triple  relative to $(\cn,\phi)$. For $r\in{\R}$
\begin{equation*} 
{\rm OP}^r=(1+\D^2)^{r/2}\left(\bigcap_{n\geq 0}{\rm dom}\,\delta_1^n\right).
\end{equation*}
If $T\in {\rm OP}^r$ we say that $T$ is a pseudodifferential operator and that the order of $T$ is (at most) $r$.
The definition is actually symmetric, since for $r$ an integer 
(at least) we have by \cite[Lemma 6.2]{CPRS2} 
\begin{align*} 
{\rm OP}^r&=(1+\D^2)^{r/2}\left(\bigcap\mbox{dom}\,\delta_1^n\right)
=(1+\D^2)^{r/2}\left(\bigcap\mbox{dom}\,\delta_1^n\right)
(1+\D^2)^{-r/2}(1+\D^2)^{r/2}\\
&\subseteq\left(\bigcap\mbox{dom}\,\delta_1^n\right)(1+\D^2)^{r/2}.
\end{align*}
From this we easily see that ${\rm OP}^r\cdot {\rm OP}^s\subseteq {\rm OP}^{r+s}$.
Finally, we note that if $b\in {\rm OP}^r$ for $r\geq 0$, then since $b=(1+\D^2)^{r/2}a$ 
for some $a\in {\rm OP}^0$, we get
$[(1+\D^2)^{1/2},b]=(1+\D^2)^{r/2}[(1+\D^2)^{1/2},a]=(1+\D^2)^{r/2}\delta_1(a),$
so $[(1+\D^2)^{1/2},b]\in {\rm OP}^r$.
\end{defn}

{\bf Remarks}: 1) An operator $T\in {\rm OP}^r$ if and only if 
$(1+\D^2)^{-r/2}T\in\bigcap_{n\geq 0}\mbox{dom}\,\delta_1^n$. 
Observe that operators of order at most zero are 
bounded. 

2) We will need a weaker notion of smoothness, introduced in Section 3, for modular spectral triples, as 
Definition \ref{qck} is not satisfied for our main example, the Podle\'s sphere.

{\bf Example.} A semifinite spectral triple is a modular spectral triple with $\phi$ a semifinite normal trace
(and so  $\cm=\cn$). 

{\bf Example.} Given a circle action on a unital $C^*$-algebra $A$, every state on $A$ which is KMS
for this circle action gives rise to a modular spectral triple of dimension 1. Explicit examples
are the Cuntz algebra with its usual gauge action, \cite{CPR2}, and the quantum group $SU_q(2)$
with its Haar state, \cite{CRT}. All these examples are $QC^\infty$ (or regular or smooth) when we use the algebra
of analytic vectors $\A\subset A$ for the circle action. More examples arising from a topological
version of the group-measure space construction are presented in \cite{CPPR}.

{\bf Example.} The only other unital example (known to the authors) is the Podle\'s sphere, which provides
an example of a modular spectral triple of dimension 2. This was first presented in \cite{DS}, and 
has been studied in numerous subsequent works by various authors. The paper \cite{W} provides
a good summary. This example is not $QC^\infty$, but a replacement for the pseudodifferential
calculus was developed in \cite{NT}. This example is our main motivation for weakening the $QC^\infty$
condition, and this example will be presented in detail in Section 4.

{\bf Nonunital examples.} We have chosen to work in the unital case for simplicity, but there
are nonunital examples, \cite{CNNR,CMR}. However, to simplify the discussion of the local index formula,
we will restrict to the unital  case. To handle the nonunital case in general, we would need to
modify the definition of modular spectral triple in order to utilise (analogues of) the results of \cite{CGRS2}, where
the local index formula is proved in the nonunital case.

\subsection{Equivariant $KK$-theory and modular spectral triples.}
\label{sec:spec-flow}

An odd modular spectral triple $(\A,\H,\D)$ with respect to $(\cn,\phi)$ 
defines an equivariant Kasparov module, and so a 
class $[\D]\in KK^{1,\T}(A,\K_\cn)$, where we recall that $A=\overline{\A}$. The construction of the Kasparov module 
associated to a modular spectral triple begins with the definition of a suitable ideal.
We will deal explicitly with the odd case here, 
just stating the analogous results in the even case.

\begin{defn}
\label{defn_j_phi}
Given a modular spectral triple $(\cA,\cH,\cD,\cN,\phi)$, let
\enveqn{
J_{\phi} := \{ SkT \colon S, \, T \in \cN, \ k \in \cK(\cM, \phi|_{\cM}) \}
}
denote the norm closed two-sided ideal in $\cN$ generated by $\cK(\cM, \phi|_{\cM})$.
\end{defn}

The ideal $J_{\phi}$ is a right Hilbert module over itself, and $\cA$ acts on the left of $J_{\phi}$ by multiplication. 
The axioms of a modular spectral triple imply that $(1 + \D^{2})^{-1/2} \in J_{\phi}$. With a little effort we can show, as in \cite[Theorem 4.1]{KNR}, that
the pair $(J_\phi,\cD(1+\cD^2)^{-1/2})$ is a Kasparov module, except that the module $J_\phi$ may not be countably generated. 

To deal with this problem, we recall the following construction from \cite[Theorem 5.3]{KNR}.

\begin{defn}
\label{def:bee-phi}
Let $(\cA,\cH,\cD,\cN,\phi)$ be a modular spectral triple, where we recall that $\A$ is separable.
Write $F_\D:=\D(1+\D^2)^{-1/2}$ and let $B_\phi$ be the smallest $C^*$-algebra in $\cN$
containing the elements
\[
F_\cD\,[F_\cD,a], \quad b\,[F_\cD,a], \quad
F_\cD\, b\,[F_\cD,a], \quad  a\,\varphi(\cD)
\]
for all $a,b\in \mathcal{A}$ and $\varphi\in C_0(\R)$. Then
$B_\phi$ 
is separable, and so $\sigma$-unital, and
contained in $J_\phi$. 
\end{defn}

\begin{prop}
\label{prop:Kas-mod}
A modular spectral triple $(\cA,\cH,\cD,\cN,\phi)$ defines an equivariant $KK$-theory class $[\D] = [B_\phi, F_{\D}] \in KK^{1, \bT}(A, B_\phi)$,
where $F_\D:=\D(1+\D^2)^{-1/2}$.
\end{prop}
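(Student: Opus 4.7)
The plan is to verify that $(B_\phi, F_\D)$ with $F_\D = \D(1+\D^2)^{-1/2}$ satisfies the axioms of an equivariant Kasparov $(A, B_\phi)$-module, where $B_\phi$ is viewed as a right Hilbert module over itself and $A$ acts by left multiplication inside $\cN$. The standard identification $\K(B_\phi) \cong B_\phi$ means the compactness conditions reduce to $B_\phi$-membership, $B_\phi$ is $\sigma$-unital by construction, and self-adjointness $F_\D^* = F_\D$ is immediate from the functional calculus.

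First I would establish that $B_\phi$ is stable under left multiplication by $F_\D$ and by each $a' \in \A$, so that we obtain an adjointable operator $F_\D \in \L(B_\phi)$ and a $*$-homomorphism $\A \to \L(B_\phi)$ that extends to $A$ by norm continuity. The key ingredients are the Leibniz identity $F_\D a = aF_\D + [F_\D, a]$, the functional-calculus identity $F_\D^2 - 1 = -(1+\D^2)^{-1} = -\varphi(\D)$ with $\varphi(x) = (1+x^2)^{-1} \in C_0(\R)$, and the analogous $F_\D\varphi(\D) = \psi(\D)$ with $\psi \in C_0(\R)$. Using these, each of the four generator types of $B_\phi$ remains in $B_\phi$ after multiplication by $F_\D$ or by elements of $\A$ on the left. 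The Kasparov conditions then fall out directly: $[F_\D, a] \in B_\phi$ for $a \in \A$ by taking $b = 1$ in the generator $b[F_\D, a]$ (using unitality of $\A$), and $a(F_\D^2 - 1) = -a\varphi(\D)$ is a generator of the fourth type; continuity in $a$ extends both to all $a \in A$. This establishes the odd Kasparov $(A, B_\phi)$-module.

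For the equivariant upgrade to $KK^{1,\T}(A, B_\phi)$, observe that since $\D$ is affiliated to $\cM = \cN^{\s^\phi}$, every bounded Borel function of $\D$ is $\s^\phi$-invariant; in particular $\s^\phi_t$ fixes $F_\D$ and each $\varphi(\D)$. Combined with $\s^\phi_t(\A) \subseteq \A$, this shows $\s^\phi_t(B_\phi) \subseteq B_\phi$. Periodicity of $\s^\phi$ gives a $\T$-action, and strong continuity on $B_\phi$ follows from strong continuity on $\A \subset A$ together with the isometricity of the $\s^\phi_t$ via a standard $\eps/3$ argument. Equivariance of the left action $A \to \L(B_\phi)$ is automatic since $\s^\phi$ is an automorphism of $\cN$, and $F_\D$ is fixed by the induced action on $\L(B_\phi)$. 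I expect the main obstacle to be the algebraic bookkeeping in step one: the generating set of $B_\phi$ has been tailored precisely to make left multiplication by $F_\D$ and by $\A$ stay inside $B_\phi$, but the verification requires repeated use of the Leibniz rule and of the two functional-calculus identities above.
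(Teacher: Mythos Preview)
Your approach is correct and fills in precisely what the paper defers to an external reference. The paper's own proof is a two-line citation: it observes that a modular spectral triple is automatically a von Neumann spectral triple relative to $J_\phi$ in the sense of \cite{KNR}, and then invokes \cite[Theorem~5.3]{KNR} to conclude that $(B_\phi,F_\D)$ is a Kasparov module, with equivariance declared immediate. You instead verify the Kasparov axioms directly, which amounts to reproving the relevant part of \cite{KNR} in this special case. The crucial simplification you exploit---and which makes the bookkeeping tractable---is unitality of $\A$: taking $a=1$ in the fourth generator type shows $\varphi(\D)\in B_\phi$ for every $\varphi\in C_0(\R)$, after which all the cross terms arising from Leibniz and from $F_\D^2=1-(1+\D^2)^{-1}$ are visibly products of elements already in $B_\phi$. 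Your treatment of equivariance is also more explicit than the paper's, spelling out why $\sigma^\phi$ fixes $F_\D$ and preserves $B_\phi$, and why the resulting $\T$-action is strongly continuous. What the paper's route buys is brevity and generality (the cited theorem handles non-unital and non-separable refinements); what your route buys is self-containment and transparency about exactly which structural features of the generating set for $B_\phi$ are doing the work.
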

\begin{proof}
A modular spectral triple is automatically a von Neumann spectral triple with respect to $J_\phi$ in the sense of \cite{KNR}.
Then  \cite[Theorem 5.3]{KNR} shows that $B_\phi$ is a countably generated right $C^*$ $B_\phi$-module, and that the
pair $(B_\phi,F_\cD)$ is a Kasparov module. The equivariance is immediate.
\end{proof}

Having obtained an equivariant Kasparov module, and so a $KK$-class, the Kasparov product defines a $K_0^{\bT}(B_\phi)$-valued 
index pairing between a modular spectral triple and equivariant $K$-theory. That is,
\enveqn{
K_{1}^{\bT}(A) \times KK^{1, \bT}(A, B_{\phi}) \rightarrow K_{0}^{\bT}(B_{\phi}).
}
See \cite[Theorem 18.4.4]{B} for example.
We now seek an analytic formula to compute this index, and in Section \ref{sec:local-index} we obtain such a formula, the resolvent index formula. 
The first step is the construction of a semifinite spectral triple which encodes the index pairing between a 
modular spectral triple and an equivariant $K$-theory class. This is necessary to obtain a well-defined 
numerical index problem. We now describe this procedure.

Given a modular spectral triple $(\A,\H,\D,\cn,\phi)$ and a class $[u]\in K_1^\T(A)$, there is a unitary $u\in M_n(\A)$ and a representation
$V:\T\to M_n(\C)$ such that $u$ is $\sigma^\phi\otimes Ad\,V$ invariant, \cite{B,CNNR}. 
In particular, if $n=1$ then $u$ is $\s^\phi$ invariant.

We can diagonalise the representation $V_t=\oplus_{j=1}^n \lambda_j^{it}$, $\lambda_j\in[1,\infty)$, and 
in this basis it is clear that

1) $u_{ij}$ transforms under $Ad\,V_t$ by $\lambda_i^{it}\lambda_j^{-it}$;

2) $u_{ij}$ transforms under $\s_t^\phi$ by $\lambda_i^{-it}\lambda_j^{it}$;

3) $V_t$ extends to an action of $\C$ which is not a $*$-action, but satisfies
$V_z^*=V_{-\bar{z}}$.

We define a positive functional $G:M_n(\C)\to \C$ by setting
$$
G(T)=\Tr(V_{-i}T),\ \ \ T\in M_n(\C).
$$
Then $G$ is a $KMS_{1}$ functional on $M_n(\C)$, \cite{BR}, for the action $Ad\,V$, but is not a state 
as it is not normalised.

Now consider the fixed point algebra 
$\cm_n=(M_n(\cn))^{\s^\phi\otimes Ad\,V}$, which is the centralizer of 
the weight $\phi\otimes G$, \cite[Proposition 4.3]{T}. Then $\phi\otimes G$ restricts to a 
faithful normal semifinite trace on
$\cm$ and moreover $u \in \cm_n$. The latter statement follows from the 
definition of $u$. The former follows since the strict semifiniteness of $\phi$ implies the
strict semifiniteness of $\phi\otimes G$.

\begin{lemma}
\label{lem:bob-the-builder}
Let  $(\A,\H,\D,\cN,\phi)$ be a modular
spectral triple which is  finitely summable and $u\in M_n(\A)$ a $\sigma^\phi$ equivariant unitary, with
associated representation $V:\T\to M_n(\C)$. Then 
$$
(C^\infty(u),\H\otimes \C^n,\D\otimes {\rm Id}_n, \cm_n,\phi\otimes G)
$$ 
is a finitely summable  semifinite spectral triple.
Here $C^\infty(u)$ is the algebra of all $f(u)\in C^*(u)$ with $f$ a $C^\infty$ function on
the spectrum of $u$. Let $B_{\phi\otimes G}\subseteq \K(\cm_n,\phi\otimes G)$ be defined as in Definition \ref{def:bee-phi}. Then 
this semifinite spectral triple defines a 
Kasparov class in $KK^{1,\T}(C^*(u),B_{\phi\otimes G})$.
\end{lemma}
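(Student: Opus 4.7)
The approach is to verify the axioms of a (semifinite) spectral triple for the proposed data, and then invoke Proposition \ref{prop:Kas-mod} in the special case when the modular group is trivial; this is legitimate because, as noted in the example following Definition 2.1, a semifinite spectral triple is a modular spectral triple with $\phi$ replaced by a faithful normal semifinite trace.

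The discussion preceding the lemma already establishes that $\cm_n$ is a von Neumann algebra with $\phi\otimes G$ restricting to a faithful normal semifinite trace, and that $u\in\cm_n$. It remains to verify: (i) $\D\otimes\Id_n$ is self-adjoint and affiliated to $\cm_n$; (ii) $[\D\otimes\Id_n, f(u)]$ extends boundedly for every $f\in C^\infty(\spec(u))$; (iii) $(1+(\D\otimes\Id_n)^2)^{-1/2}\in\K(\cm_n,\phi\otimes G)$; and (iv) finite summability holds with the same spectral dimension $p$. Claim (i) is immediate from $\D$ being affiliated to $\cm$ and $\Id_n$ being fixed by $\Ad V$. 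For (iii), since $(1+\D^2)^{-1/2}\in\K(\cm,\phi|_\cm)$ by hypothesis and $\Id_n$ has finite $G$-value, the tensor product lies in $\K(\cm_n,\phi\otimes G)$. For (iv), a direct computation gives
\[
(\phi\otimes G)\bigl((1+\D^2\otimes\Id_n)^{-s/2}\bigr) \;=\; \Tr(V_{-i})\cdot\phi\bigl((1+\D^2)^{-s/2}\bigr),
\]
where $\Tr(V_{-i})=\sum_j \lambda_j<\infty$, so the summability index coincides with $p$.

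The most delicate step is (ii), which I expect to be the main technical point. The matrix entries $u_{ij}\in\A$ satisfy $[\D,u_{ij}]\in\cn$ by hypothesis, so the Leibniz rule shows that $[\D\otimes\Id_n, u^k]$ is bounded for every $k\in\Z$ (using $[\D\otimes\Id_n,u^*]=-u^{*}[\D\otimes\Id_n,u]u^{*}$ to handle negative powers), with operator norm growing at most polynomially in $|k|$. Since $\spec(u)$ lies in the unit circle, a smooth function $f$ on $\spec(u)$ has rapidly decaying Fourier coefficients, so $f(u)=\sum_k\hat f(k)u^k$ converges in norm and the termwise commutator series converges to a bounded operator extending $[\D\otimes\Id_n,f(u)]$. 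Finally, the resulting semifinite spectral triple is $\T$-equivariant: the combined action $\sigma^\phi\otimes\Ad V$ fixes $u$ by construction and hence acts trivially on $C^*(u)$, it fixes $\D\otimes\Id_n$ because both tensor factors are invariant, and therefore it also fixes each generator of $B_{\phi\otimes G}$ appearing in Definition \ref{def:bee-phi}. The class produced by Proposition \ref{prop:Kas-mod} thus naturally lives in $KK^{1,\T}(C^*(u),B_{\phi\otimes G})$ as required.
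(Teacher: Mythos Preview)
Your proof is correct and follows the same route as the paper's: verify the semifinite spectral triple axioms from the construction preceding the lemma, then invoke Proposition~\ref{prop:Kas-mod} for the Kasparov class. The paper's proof is a two-line sketch deferring entirely to ``the construction'' and Proposition~\ref{prop:Kas-mod}, whereas you have spelled out the details---in particular the Fourier argument for boundedness of $[\D\otimes\Id_n,f(u)]$---which the paper leaves implicit.
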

\begin{proof}
The statement that we obtain a semifinite spectral triple follows from the construction, and that we get a Kasparov
module follows from Proposition \ref{prop:Kas-mod}. 
\end{proof}

Thus given $[u,V]\in K_1^\T(\A)$, we apply \cite[Theorem 6.9]{KNR} to compute the spectral flow, \cite{Ph}. Let 
$i:B_{\phi\otimes G}\subset \K(\cm_n,\phi\otimes G)$ be the inclusion, and $i_*:K_0(B_{\phi\otimes G})\to K_0(\K(\cm_n,\phi\otimes G))$.
Then \cite[Theorem 6.9]{KNR} allows us to compute the spectral flow as
\enveq{
\label{eqn_sf}
sf_{\phi\otimes G}(\D\ox {\rm Id}_n,\,u(\D\ox {\rm Id}_n)u^*) = (\phi\otimes G)_*(i_*([u]\otimes_{C^*(u)}[\D\otimes{\rm Id}_n])).
}

At this point, we have obtained an index problem which {\em a priori} depends on the representative 
$u$ of the equivariant $K$-theory class (through the use of $C^*(u)$). To show that we do indeed have a well-defined pairing with $K_1^\T(\A)$,
we will show, via the resolvent index formula, that the index can be computed in terms of the Chern character of $u$, which is independent of
the chosen representative of the class $[u]$. Finally, we show that the original index pairing between a modular 
spectral triple and equivariant $K$-theory can be described by the spectral flow above.


\section{The resolvent index formula in twisted cyclic cohomology}
\label{sec:local-index}

In this section we express the spectral flow from Equation \eqref{eqn_sf} in terms of the pairing between a twisted 
cyclic cocycle dependent only on the modular spectral triple and the Chern character of the equivariant unitary. 
In order to achieve this without invoking the $QC^\infty$ property, we make a technical improvement on the 
work of \cite{CPRS2} by using a  weaker smoothness condition. This is necessary for our application, as the Podle\'s sphere modular spectral triple is not
$QC^\infty$.

\subsection{Weakly $QC^\infty$ modular spectral triples}
\label{subsec:weak-qcinfty}

We weaken the $QC^\infty$ condition with the aim of justifying a
resolvent expansion, used in the proof of our index formulae, without recourse to the
pseudodifferential calculus. There are two basic reasons for doing this. 

The first is that
the example of the Podle\'s sphere shows 
that we do not always have the $QC^\infty$ property for modular
spectral triples. 

The  second reason is that, conceptually, the use of the pseudodifferential calculus to
prove existence and continuity of the resolvent cocycle  is 
overkill, requiring us to invoke much more smoothness than is necessary for the 
statment of existence and continuity. 

\begin{defn} 
\label{defn:weak-op}
Let $(\A,\H,\D)$ be a modular spectral triple relative to $(\cn,\phi)$.
For $T\in\cn$ mapping the domain of $\D^2$ to itself, define 
\begin{align*}
WL(T):=(1+\D^2)^{-1}[\D^2,T]&=(1+\D^2)^{-1}T(1+\D^2)-T,\\ 
WR(T):=[\D^2,T](1+\D^2)^{-1}&=(1+\D^2)T(1+\D^2)^{-1}-T.
\end{align*}
We say that $(\A,\H,\D)$ is weakly $QC^\infty$ if 
$$
\A\subset {\rm OP}^0\subset \cN\quad{\rm and}\quad
[\D,\A]\subset \wop^0:=\bigcap_{k,l\geq 0}\mbox{dom}(WL)^k(WR)^l\subset \cN.
$$
\end{defn}

The analogous definition of weak $QC^k$ is awkward, since in Definition \ref{qck}, $QC^k$ is defined in
terms of commutators with $|\D|$ or $(1+\D^2)^{1/2}$. We will leave aside these questions,
and just work with weak $QC^\infty$. Also, $QC^\infty$ implies weak $QC^\infty$ 
by the boundedness of $(1+\D^2)^{-1/2}$.

While we do not have a pseudodifferential calculus for a weakly $QC^\infty$ modular
spectral triple $(\A,\H,\D)$, we may consider the 
weak pseudodifferential operators of order $s\in\R$ given by
$$
\wop^s:=(1+\D^2)^{s/2}\left(\bigcap_{k,l}\,{\rm dom}\,WL^k\circ WR^l\right).
$$
This definition is symmetric, in the sense that 
$$
\wop^s=\left(\bigcap_{k,l}\,{\rm dom}\,WL^k\circ WR^l\right)(1+\D^2)^{s/2},
$$
since for all $s\in\R$, $\wop^s$ is preserved 
by $T\mapsto (1+\D^2)^{\pm s}T(1+\D^2)^{\mp s}$,
by Lemma \ref{lem:bdd-conj} below.
Observe also that we have ${\rm OP}^s\subset \wop^s$.

It follows from the definitions that if $(\A,\H,\D)$
is a weakly $QC^\infty$ modular spectral triple and $u\in M_n(\A)$ is an equivariant unitary, then the associated
semifinite spectral triple $(C^\infty(u),\H\otimes\C^n,\D\otimes{\rm Id}_n, \cm_n,\phi\otimes G)$ 
is also weakly $QC^\infty$.

The next few lemmas record some basic properties of the maps $WL$ and $WR$.

\begin{lemma} 
\label{lem:bdd-conj}
Let $\D:{\rm dom}\D\subset \H\to \H$ be an unbounded self-ajoint operator. 
Then $T\in \B(\H)$ belongs to
$$
\bigcap_{k,l\geq 0}{\rm dom}(WL)^k(WR)^l
$$
if and only if $(1+\D^2)^{s/2}T(1+\D^2)^{-s/2}$ extends to a bounded operator for all $s\in\R$.
\end{lemma}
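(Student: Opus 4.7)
Observe that $T+WL(T)=(1+\D^2)^{-1}T(1+\D^2)$ and $T+WR(T)=(1+\D^2)T(1+\D^2)^{-1}$ realize the two one-step conjugations by $(1+\D^2)^{\pm 1}$, which are mutually inverse on operators preserving ${\rm dom}(\D^2)$; in particular $WL$ and $WR$ commute. Induction combined with the binomial theorem then gives
\begin{align*}
(1+\D^2)^{-k}T(1+\D^2)^k&=\sum_{j=0}^k\binom{k}{j}WL^j(T),\\
WL^k(T)&=\sum_{j=0}^k\binom{k}{j}(-1)^{k-j}(1+\D^2)^{-j}T(1+\D^2)^j,
\end{align*}
and analogous identities for $WR$. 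These reduce the equivalence in the lemma to the assertion that $(1+\D^2)^n T(1+\D^2)^{-n}$ is bounded for every $n\in\Z$ if and only if $(1+\D^2)^{s/2}T(1+\D^2)^{-s/2}$ is bounded for every $s\in\R$.

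The $(\Leftarrow)$ direction is then immediate: at $s=\pm 2$ one sees that $T$ preserves ${\rm dom}(\D^2)$, and each $(WL)^k(WR)^l(T)$ is a finite linear combination of bounded operators $(1+\D^2)^m T(1+\D^2)^{-m}$ via the displayed identities. For $(\Rightarrow)$ I plan to run a Hadamard three-lines interpolation. Let $V\subset\H$ denote the dense subspace spanned by spectral subspaces of $\D^2$ for bounded intervals, and for $\xi,\eta\in V$ set
$$
f_{\xi,\eta}(z):=\langle (1+\D^2)^{z/2}T(1+\D^2)^{-z/2}\xi,\eta\rangle=\langle T(1+\D^2)^{-z/2}\xi,(1+\D^2)^{\bar z/2}\eta\rangle.
$$
Functional calculus makes $f_{\xi,\eta}$ entire in $z$, and the compact spectral support of $\xi,\eta$ makes $\|(1+\D^2)^{\pm z/2}\xi\|$ and $\|(1+\D^2)^{\pm\bar z/2}\eta\|$ bounded on each vertical strip uniformly in $\Im(z)$. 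Hence the three-lines theorem applies on any strip $2k_1\leq\Re(z)\leq 2k_2$ and yields $|f_{\xi,\eta}(s)|\leq C\|\xi\|\|\eta\|$ with $C$ a convex combination (in $\log$) of the two integer-endpoint operator norms $\|(1+\D^2)^{k_i}T(1+\D^2)^{-k_i}\|$; density of $V$ promotes this sesquilinear bound to boundedness of $(1+\D^2)^{s/2}T(1+\D^2)^{-s/2}$ as an operator on $\H$.

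The main obstacle is the interpolation step, specifically verifying the boundedness-on-strips hypothesis of the three-lines theorem. This is exactly where the restriction to vectors with compactly supported spectral data becomes essential, since it forces $|f_{\xi,\eta}(z)|$ to be uniformly bounded on each vertical strip (rather than just integrable or of finite type); everything else is combinatorial book-keeping with the binomial identities above.
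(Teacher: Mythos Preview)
Your argument is correct and takes a genuinely different route from the paper. Both proofs agree on the first reduction: membership in $\bigcap_{k,l}\mathrm{dom}(WL)^k(WR)^l$ is equivalent to boundedness of $(1+\D^2)^{n}T(1+\D^2)^{-n}$ for all integers $n$. For the passage from integers to all real $s$, the paper works by hand with the integral representation
\[
(1+\D^2)^{-s}=\frac{\sin(s\pi)}{\pi}\int_0^\infty \lambda^{-s}(1+\lambda+\D^2)^{-1}\,d\lambda,\qquad 0<s<1,
\]
commutes $T$ past the resolvent at the cost of a commutator term, and then estimates the resulting integrals directly via the functional calculus. You instead run a Hadamard three-lines interpolation on the sesquilinear form $f_{\xi,\eta}(z)=\langle T(1+\D^2)^{-z/2}\xi,(1+\D^2)^{\bar z/2}\eta\rangle$ with $\xi,\eta$ of bounded spectral support; on the boundary lines $\Re(z)=2k$ one factors $(1+\D^2)^{k+it/2}=(1+\D^2)^{it/2}(1+\D^2)^{k}$ and uses unitarity of $(1+\D^2)^{it/2}$ to recover exactly the integer-power hypothesis. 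Your route is shorter and more conceptual, and as a bonus yields the log-convex interpolation bound $\|(1+\D^2)^{s/2}T(1+\D^2)^{-s/2}\|\le \|(1+\D^2)^{k_1}T(1+\D^2)^{-k_1}\|^{1-\theta}\|(1+\D^2)^{k_2}T(1+\D^2)^{-k_2}\|^{\theta}$; the paper's resolvent-integral approach is more explicit and self-contained, and foreshadows the resolvent manipulations used later in Lemma~\ref{lemma_Bs_trace}.
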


\begin{proof}
It follows from Definition \ref{defn:weak-op} that $T\in  \bigcap_{k,l\geq 0}\mbox{dom}(WL)^k(WR)^l$ if and only if 
$(1+\D^2)^{k}T(1+\D^2)^{-k}$ is a bounded operator for all 
$k\in\Z$. It is also immediate that if $(1+\D^2)^{s/2}T(1+\D^2)^{-s/2}$ is 
bounded for all $s\in\R$, then $T\in \bigcap_{k,l\geq 0}\mbox{dom}(WL)^k(WR)^l$. 
So let $0<s<1$, and recall that
$$
(1+\D^2)^{-s}=\frac{\sin(s\pi)}{\pi}\int_0^\infty \lambda^{-s}(1+\lambda+\D^2)^{-1}d\lambda.
$$
Then for $T\in \bigcap_{k,l\geq 0}\mbox{dom}(WL)^k(WR)^l$ we have
\begin{align*}
&(1+\D^2)^sT(1+\D^2)^{-s}\\
&=(1+\D^2)^sT\frac{\sin(s\pi)}{\pi}\int_0^\infty \lambda^{-s}(1+\lambda+\D^2)^{-1}d\lambda\\
&=(1+\D^2)^s\frac{\sin(s\pi)}{\pi}\int_0^\infty \lambda^{-s}
\left((1+\lambda+\D^2)^{-1}T+(1+\lambda+\D^2)^{-1}[\D^2,T](1+\lambda+\D^2)^{-1}\right)d\lambda\\
&=(1+\D^2)^s\frac{\sin(s\pi)}{\pi}\int_0^\infty \lambda^{-s}
(1+\lambda+\D^2)^{-1}\left(T+[\D^2,T](1+\D^2)^{-1}(1+\D^2)(1+\lambda+\D^2)^{-1}\right)d\lambda\\
&=T+(1+\D^2)^s\frac{\sin(s\pi)}{\pi}\int_0^\infty \lambda^{-s}
(1+\lambda+\D^2)^{-1}[\D^2,T](1+\D^2)^{-1}(1+\D^2)(1+\lambda+\D^2)^{-1}d\lambda.
\end{align*}
An application of the functional calculus now
shows that the integral is norm convergent, but in order to show that $(1+\D^2)^s$ times the integral is bounded, we must work a little harder.
We write
$$
(1+\D^2)(1+\lambda+\D^2)^{-1}=1-\lambda(1+\lambda+\D^2)^{-1},
$$
so that the integral can be written, with $B=[\D^2,T](1+\D^2)^{-1}$, as
\begin{align*}
&\int_0^\infty \lambda^{-s}
(1+\lambda+\D^2)^{-1}[\D^2,T](1+\D^2)^{-1}(1+\D^2)(1+\lambda+\D^2)^{-1}d\lambda\\
&=\int_0^\infty \lambda^{-s}
(1+\lambda+\D^2)^{-1}B\,d\lambda-\int_0^\infty \lambda^{-s}\,\lambda
(1+\lambda+\D^2)^{-1}B(1+\lambda+\D^2)^{-1}\,d\lambda.
\end{align*}
The first integral on the right hand side converges in norm to $\frac{\pi}{\sin(s\pi)}(1+\D^2)^{-s}B$. For the second integral on the
right hand side, we suppose first that $B$ is self-adjoint. Then 
$$
\lambda(1+\lambda+\D^2)^{-1}B(1+\lambda+\D^2)^{-1}\leq \Vert B\Vert\,\lambda(1+\lambda+\D^2)^{-2}\leq \Vert B\Vert\,(1+\lambda+\D^2)^{-1}.
$$
Thus for $B$ self-adjoint, the second integral converges in norm to an operator which is bounded above by $\frac{\pi}{\sin(s\pi)}(1+\D^2)^{-s}\Vert B\Vert$. By decomposing 
$B$ into its real and imaginary parts, this is true for any bounded $B$.
Thus  for $0<s<1$, $(1+\D^2)^sT(1+\D^2)^{-s}$ is bounded, and a
similar argument shows that $(1+\D^2)^{-s}T(1+\D^2)^{s}$ is bounded.
\end{proof}

In all the following, we define $R_s(\lambda):=(\lambda-(1+s^2+\D^2))^{-1}$ for $s\geq 0$ and 
$\lambda$ in the vertical line 
$$
l:=\{a+iv: \ -\infty<v<\infty\}
$$
for some fixed $0<a<1/2$.

\begin{lemma}
\label{lem:bdd}
Let $(\A,\H,\D)$ be a weakly $QC^\infty$ modular spectral triple relative to
$(\cn,\phi)$. Then $R_s(\lambda)[\D^2,T]$ is 
uniformly bounded on the line $l$ independent
of $s,\,\lambda$ for all $T\in\A\cup[\D,\A]$. For all $T\in\A\cup[\D,\A]$, the function 
$\lambda\mapsto R_s(\lambda)TR_s(\lambda)^{-1}$ is uniformly bounded
and differentiable on the line $l$ with derivative $-R_s(\lambda)^2[\D^2,T]$ which vanishes 
as $\lambda\to a\pm i\infty$.
\end{lemma}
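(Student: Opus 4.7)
My plan has two steps. First, bound $R_s(\lambda)[\D^2,T]$ uniformly by factoring off $(1+\D^2)$ and controlling $R_s(\lambda)(1+\D^2)$ by an elementary resolvent estimate. Second, derive the differentiability and vanishing assertions from a commutator identity together with the holomorphy of the resolvent.

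For the first step, since $T\in\A\cup[\D,\A]\subset \wop^0\subset{\rm dom}\,WL$, the operator $WL(T)=(1+\D^2)^{-1}[\D^2,T]$ extends to an element of $\cn$, and we may factor $[\D^2,T]=(1+\D^2)WL(T)$. From the definition of $R_s(\lambda)$ one has $1+\D^2=(\lambda-s^2)-R_s(\lambda)^{-1}$, whence
\[
R_s(\lambda)(1+\D^2)=(\lambda-s^2)R_s(\lambda)-1.
\]
Because $\spec(1+s^2+\D^2)\subset[1+s^2,\infty)$ and $0<a<1/2$, we have $\|R_s(\lambda)\|=((1+s^2-a)^2+v^2)^{-1/2}$ for $\lambda=a+iv\in l$, and a short case split (on $s^2\le a$, $a<s^2\le 1$, and $s^2>1$) shows that $|\lambda-s^2|\,\|R_s(\lambda)\|\le 1$ uniformly in $s\ge 0$ and $\lambda\in l$. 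Hence $\|R_s(\lambda)[\D^2,T]\|\le 2\|WL(T)\|$ uniformly, proving the first claim.

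For the second step, since $[R_s(\lambda)^{-1},T]=-[\D^2,T]$, a direct manipulation gives the resolvent commutator identity $[R_s(\lambda),T]=R_s(\lambda)[\D^2,T]R_s(\lambda)$, and multiplying on the right by $R_s(\lambda)^{-1}$ yields
\[
R_s(\lambda)TR_s(\lambda)^{-1}=T+R_s(\lambda)[\D^2,T].
\]
Uniform boundedness on $l$ is then immediate from step one. The map $\lambda\mapsto R_s(\lambda)$ is norm holomorphic on $\C\setminus[1+s^2,\infty)\supset l$ with derivative $-R_s(\lambda)^2$, so differentiating the identity above produces
\[
\frac{d}{d\lambda}\bigl(R_s(\lambda)TR_s(\lambda)^{-1}\bigr)=-R_s(\lambda)^2[\D^2,T]=-R_s(\lambda)\bigl(R_s(\lambda)[\D^2,T]\bigr).
\]
Since $\|R_s(\lambda)\|\le 1/|v|\to 0$ as $v\to\pm\infty$, while $R_s(\lambda)[\D^2,T]$ remains uniformly bounded by step one, the derivative vanishes as $\lambda\to a\pm i\infty$.

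I do not anticipate any substantive obstacle. The only point requiring care is the uniformity in $s\ge 0$ of the constant in the first step, which is settled by the elementary case split noted above; after that, the commutator identity and holomorphy of the resolvent do all the remaining work.
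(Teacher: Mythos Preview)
Your proposal is correct and follows essentially the same route as the paper: both factor $R_s(\lambda)[\D^2,T]=R_s(\lambda)(1+\D^2)\cdot WL(T)$ and bound $R_s(\lambda)(1+\D^2)$ uniformly, then use the identity $R_s(\lambda)TR_s(\lambda)^{-1}=T+R_s(\lambda)[\D^2,T]$. The only cosmetic difference is that the paper establishes differentiability via explicit difference quotients while you invoke norm holomorphy of the resolvent; your case split for the uniform bound is in fact unnecessary (the inequality $(s^2-a)^2\le(1+s^2-a)^2$ holds outright since $a<1/2$), and when you differentiate $R_s(\lambda)[\D^2,T]$ you should tacitly use your own factorisation $((\lambda-s^2)R_s(\lambda)-1)WL(T)$ to keep everything bounded-operator-valued, but the argument is sound.
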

\begin{proof}
First $R_s(\lambda)[\D^2,T]=R_s(\lambda)(1+\D^2)(1+\D^2)^{-1}[\D^2,T]$ and 
$R_s(\lambda)(1+\D^2)$ is uniformly bounded. Then $R_s(\lambda)TR_s(\lambda)^{-1}=R_s(\lambda)[\D^2,T]+T$
is uniformly bounded on $l$.
For the differentiability, we form the difference quotients where $\epsilon$ is chosen so that $\lambda+\epsilon$ 
lies in a small ball centred on $\lambda=a+iv$
\begin{align*}
&R_s(\lambda+\epsilon)TR_s(\lambda+\epsilon)^{-1}-R_s(\lambda)TR_s(\lambda)^{-1}\\
&=(R_s(\lambda+\epsilon)-R_s(\lambda))TR_s(\lambda+\epsilon)^{-1}
+R_s(\lambda)T(R_s(\lambda+\epsilon)-R_s(\lambda)^{-1})\\
&=-\epsilon R_s(\lambda+\epsilon)R_s(\lambda)TR_s(\lambda+\epsilon)^{-1}
+\epsilon R_s(\lambda)T.
\end{align*}
Now the uniform boundedness of $R_s(\lambda)TR_s(\lambda)^{-1}$ 
and the boundedness of $R_s(\lambda)T$ show that
after dividing by $\epsilon$, the norm limit as $\epsilon\to 0$ exists and is given by
$$
R_s(\lambda)T-R_s(\lambda)^2TR_s(\lambda)^{-1}=-R_s(\lambda)^2[\D^2,T].
$$
This is not only bounded, but goes to zero as $|\lambda|\to \infty$ along the line $l=a+iv$.
\end{proof}

\begin{lemma}
\label{lemma_bdd_diff_resolvent}
With $(\A,\H,\D)$ as above and $T\in\A\cup[\D,\A]$, we have the formula
$$
R_s(\lambda)^nTR_s(\lambda)^{-n}=T+\sum_{j=1}^n(n-j+1)R_s(\lambda)^jT^{(j)}.
$$
\end{lemma}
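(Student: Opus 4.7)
The plan is a straightforward induction on $n$, with $T^{(j)}$ interpreted as the $j$-fold iterated commutator $[\D^2,[\D^2,\ldots,[\D^2,T]\ldots]]$. The base case $n=1$ is exactly the content of Lemma \ref{lem:bdd}.

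For the inductive step, I would write
$$R_s(\lambda)^{n+1}TR_s(\lambda)^{-(n+1)}=R_s(\lambda)\bigl(R_s(\lambda)^{n}TR_s(\lambda)^{-n}\bigr)R_s(\lambda)^{-1},$$
substitute the inductive hypothesis inside the brackets, and distribute the outer $R_s(\lambda)$ and $R_s(\lambda)^{-1}$ across the resulting sum. Each term of the form $R_s(\lambda)^{j+1}T^{(j)}R_s(\lambda)^{-1}$ rewrites as $R_s(\lambda)^{j}\bigl(R_s(\lambda)T^{(j)}R_s(\lambda)^{-1}\bigr)$, and Lemma \ref{lem:bdd} applied to $T^{(j)}$ in place of $T$ produces $R_s(\lambda)T^{(j)}R_s(\lambda)^{-1}=T^{(j)}+R_s(\lambda)T^{(j+1)}$. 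Expanding and reindexing collects everything into the form $T+\sum_{j=1}^{n+1}c_{n+1,j}R_s(\lambda)^{j}T^{(j)}$, where the coefficients satisfy a simple two-term recurrence with the correct boundary values, and a combinatorial check yields the closed form claimed.

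The main technical point is to justify that Lemma \ref{lem:bdd} actually applies to each iterated commutator $T^{(j)}$, not just to $T$ itself. Concretely this requires each $T^{(j)}$ to preserve ${\rm dom}\,\D^2$ and each $R_s(\lambda)[\D^2,T^{(j)}]$ to be uniformly bounded on $l$. This is precisely what the weak $QC^\infty$ hypothesis of Definition \ref{defn:weak-op} supplies for $T\in\A\cup[\D,\A]$: iterating the resolvent argument used in the proof of Lemma \ref{lem:bdd} shows inductively that $(1+\D^2)^{-1}T^{(j+1)}$ is bounded for every $j$, giving the closure property needed to feed the recursion. Once this domain stability is in place the rest of the proof is routine algebraic bookkeeping, so I expect the interaction between weak smoothness and the iterated commutators to be the only substantive issue.
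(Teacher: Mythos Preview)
Your approach is essentially identical to the paper's, whose entire proof reads ``Induction and the easy formula $R_s(\lambda)TR_s(\lambda)^{-1}=T+R_s(\lambda)[\D^2,T]$.'' Your additional care about why the conjugation identity may be iterated on the $T^{(j)}$ (via the weak $QC^\infty$ hypothesis) is a point the paper simply leaves implicit.
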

\begin{proof}
Induction and the easy formula $R_s(\lambda)TR_s(\lambda)^{-1}=T+R_s(\lambda)[\D^2,T]$.
\end{proof}


\begin{corl} 
\label{cr:deriv}
The function $\lambda\mapsto R_s(\lambda)^nTR_s(\lambda)^{-n}$ 
is norm differentiable for all $T\in\A\cup[\D,\A]$.
The derivative goes to zero in norm as $\lambda\to a\pm i\infty$ and is given by
\enveq{
\label{eqn_ddl_rtr}
\frac{d}{d\lambda}R_s(\lambda)^nTR_s(\lambda)^{-n}
=-R_s(\lambda)\sum_{j=1}^nj(n-j+1)R_s(\lambda)^jT^{(j)}.
}
\end{corl}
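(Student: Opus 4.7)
The plan is to start from the explicit identity of Lemma \ref{lemma_bdd_diff_resolvent} and differentiate term by term in $\lambda$. Since
\[R_s(\lambda)^n T R_s(\lambda)^{-n} = T + \sum_{j=1}^n (n-j+1)R_s(\lambda)^j T^{(j)},\]
and the $T^{(j)}$ are independent of $\lambda$, the problem reduces to differentiating each power $R_s(\lambda)^j$. Because $l$ is contained in the resolvent set of $1+s^2+\D^2$, the operator-valued function $R_s$ is norm-holomorphic on a neighbourhood of $l$, and the standard resolvent identity $\frac{d}{d\lambda}R_s(\lambda) = -R_s(\lambda)^2$ iterates to $\frac{d}{d\lambda}R_s(\lambda)^j = -jR_s(\lambda)^{j+1}$ in operator norm.

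Substituting and re-factoring, I would obtain
\[\frac{d}{d\lambda}R_s(\lambda)^n T R_s(\lambda)^{-n} = -\sum_{j=1}^n j(n-j+1)R_s(\lambda)^{j+1}T^{(j)} = -R_s(\lambda)\sum_{j=1}^n j(n-j+1)R_s(\lambda)^j T^{(j)},\]
which is precisely \eqref{eqn_ddl_rtr}. The decay as $\lambda\to a\pm i\infty$ along $l$ then drops out of the factored form: the leading $R_s(\lambda)$ satisfies $\|R_s(\lambda)\|\leq 1/|{\rm Im}\,\lambda|\to 0$, while the remaining sum is uniformly norm-bounded on $l$ by the same uniform estimates used for Lemma \ref{lemma_bdd_diff_resolvent}.

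The main obstacle, modest as it is, is justifying that each summand $R_s(\lambda)^j T^{(j)}$ is a bounded operator depending norm-differentiably on $\lambda$, so that the term-wise differentiation converges in operator norm. This is where the weak $QC^\infty$ hypothesis enters. Using that $(1+\D^2)^{-1}$ commutes with $\D^2$, one checks inductively that $T^{(j)} = (1+\D^2)^j WL^j(T)$, and hence
\[R_s(\lambda)^j T^{(j)} = \bigl[R_s(\lambda)(1+\D^2)\bigr]^j WL^j(T).\]
The functional calculus shows $R_s(\lambda)(1+\D^2)$ is uniformly norm-bounded on $l$ (its bound is $\sup_{\mu\geq 0}(1+\mu)/|\lambda-1-s^2-\mu|$, which is finite since $0<a<1/2$), while weak $QC^\infty$ ensures each $WL^j(T)$ is bounded for $T\in\A\cup[\D,\A]$. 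The same factorisation controls the difference quotients exactly as in Lemma \ref{lem:bdd}, extended to higher powers, yielding the required norm differentiability.
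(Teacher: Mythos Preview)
Your proposal is correct and is precisely the argument the paper intends: the corollary is stated without explicit proof, and the natural way to deduce it is to differentiate the finite expansion of Lemma \ref{lemma_bdd_diff_resolvent} term by term, using the factorisation $R_s(\lambda)^jT^{(j)}=[R_s(\lambda)(1+\D^2)]^j\,WL^j(T)$ (which is the $j$-fold iterate of the $j=1$ case already appearing in the proof of Lemma \ref{lem:bdd}) to see that each summand is bounded, uniformly bounded on $l$, and norm-differentiable. Your decay argument via the leading $R_s(\lambda)$ factor is likewise the intended one.
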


We now prove the main technical result we require, which weakens the smoothness hypotheses of \cite[Lemma 7.2]{CPRS2}.

\begin{lemma}
\label{lemma_Bs_trace}
Let $(\cA, \cH, \cD, \cN, \phi)$ be a weakly $QC^\infty$ modular spectral triple of dimension $p \geq 1$. Let $m$ be a non-negative integer and $j = 0, \dots, m$.
\begin{enumerate}
\item Let $A_j \in \wop^{k_j}$, $k_j \geq 0$, with the product $A_{0} A_{1} \cdots A_{m}$ being $\sigma^{\phi}_{t}$-invariant and
affiliated to $\cM=\cN^{\sigma^\phi}$. Then the map
\enveqn{
r \mapsto B^{r}(s) = \frac{1}{2\pi i} \int_{l} \lambda^{-p/2-r}A_0 R_s(\lambda)A_1 R_s(\lambda)
A_2 \cdots R_s(\lambda) A_m R_s(\lambda) d\lambda,
}
is an analytic function with values in $\mathrm{Dom}(\phi)$ for $r \in \{z \in \bC:\ \Re(z)>|k|/2 - m,\ z\not\in\bN-p/2\}$, where 
$|k| = k_0 + k_1 + \cdots + k_m$. For $\alpha > 0$, the function $s \mapsto s^{\alpha} \times \phi \big( |B^{r}(s)| \big)$ is 
integrable on $[0,\infty)$ when in addition we have $1+\alpha+|k|-2m<2\Re(r)$.
\item Define $\hat{R}_s(\lambda) = (\lambda-(1 + s^{2} + \cD^{2} + sK))^{-1}$, for an operator $K = K^{\ast}$ with 
$\Vert K \Vert_{\infty} \leq \sqrt{2}$. For $a_j \in \cA$, with $a_{0} a_{1} \cdots a_{m} \in \cM$, and $r \in \bC$, with $\Re(r)>0$, the operator
\enveqn{
\tilde{B}^{r}(s) = \frac{1}{2\pi i}\int_l\lambda^{-p/2-r}a_0R_s(\lambda)[\cD, a_1]
R_s(\lambda)[\cD, a_2]\cdots R_s(\lambda) [\cD, a_m] \hat{R}_s(\lambda) d\lambda
}
is in $\mathrm{Dom}(\phi)$, and the function $s \mapsto s^{m} \times \phi \big( |\tilde{B}^{r}(s)| \big)$ is integrable on $[0,\infty)$ when 
$p < 1 + m$ and $1 < m + 2\Re(r)$.
\end{enumerate}
\end{lemma}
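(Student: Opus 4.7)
The plan is to adapt the proof of \cite[Lemma 7.2]{CPRS2} from full $QC^\infty$ to the weakly $QC^\infty$ setting. The key input that replaces the pseudodifferential calculus is Lemma \ref{lem:bdd-conj}: for every $T \in \wop^0$ and every real $s$, the operator $(1+\D^2)^{s/2} T (1+\D^2)^{-s/2}$ is bounded. Combined with the uniform boundedness of $R_s(\lambda) T R_s(\lambda)^{-1}$ from Lemma \ref{lem:bdd} and the expansion of Lemma \ref{lemma_bdd_diff_resolvent}, this is all the smoothness needed to reproduce the algebraic manipulations of the CPRS2 argument without invoking the full pseudodifferential calculus.

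For Part 1, I would iteratively apply
\[
R_s(\lambda) A = A R_s(\lambda) - R_s(\lambda) [\D^2, A] R_s(\lambda)
\]
to migrate each $A_j$ leftward through the string of resolvents. The result is a finite sum whose principal term is $A_0 A_1 \cdots A_m R_s(\lambda)^{m+1}$ and whose remainders contain iterated commutators $A_j^{(\ell)} = \ad(\D^2)^\ell(A_j)$ sandwiched between extra resolvents; by weak $QC^\infty$ each $A_j^{(\ell)} \in \wop^{k_j + 2\ell}$, and Lemma \ref{lem:bdd-conj} together with Lemma \ref{lem:bdd} shows that the associated products $R_s(\lambda)^\ell A_j^{(\ell)} R_s(\lambda)^\ell$ are bounded uniformly in $\lambda \in l$. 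The $\lambda$-integral is then evaluated by a contour shift and the functional calculus, yielding expressions of the form (bounded operator) $\cdot (1+s^2+\D^2)^{-p/2-r-m+|k|/2}$ with binomial prefactors analytic in $r$ off $\bN - p/2$. The finite summability hypothesis then places this in $\mathrm{Dom}(\phi)$ precisely when $2\Re(r) > |k| - 2m$, and analyticity in $r$ follows by dominated convergence under $\phi$. Finally, a rescaling of the spectral variable by $(1+s^2)^{1/2}$ inside the trace yields the bound $\phi(|B^r(s)|) \leq C(1+s^2)^{p/2 - m - \Re(r) + |k|/2}$, so that $s^\alpha \phi(|B^r(s)|)$ is integrable on $[0,\infty)$ exactly when $1 + \alpha + |k| - 2m < 2\Re(r)$.

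Part 2 proceeds by the same scheme. I would first use the second resolvent formula $\hat R_s(\lambda) = R_s(\lambda) + s R_s(\lambda) K \hat R_s(\lambda)$ to expand $\hat R_s(\lambda)$ as a finite series in powers of $s$ times products of $R_s(\lambda)$'s and the bounded operator $K$, plus a remainder. Since weak $QC^\infty$ gives $[\D, a_j] \in \wop^0$, the Part 1 commutation machinery applies verbatim with all orders $k_j = 0$; the factor $s^m$ in the hypothesis records the bookkeeping from the $m$ commutators $[\D, a_j]$ and the $sK$ corrections. The conditions $p < 1+m$ and $1 < m + 2\Re(r)$ are the analogues of the trace-class and integrability conditions of Part 1. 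The hardest part of the argument will be to verify that all the bounds emerging from Lemma \ref{lem:bdd-conj} and the iterated resolvent identities are genuinely uniform in both $\lambda \in l$ and $s \in [0,\infty)$: this is precisely the step that the pseudodifferential calculus handles tacitly in \cite{CPRS2}, and it is why Lemmas \ref{lem:bdd-conj}--\ref{lemma_bdd_diff_resolvent} and Corollary \ref{cr:deriv} are set up in exactly the form given.
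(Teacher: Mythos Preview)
Your approach is workable but genuinely different from the paper's, and your sketch skips the step that the paper's method is specifically designed to avoid.

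The paper does \emph{not} Taylor-expand $R_s(\lambda)A_j$ and then apply a Cauchy integral. Instead it regroups the integrand as
\[
A_0\,(RA_1R^{-1})(R^2A_2R^{-2})\cdots(R^mA_mR^{-m})\,R^{m+1},
\]
replaces each $A_j$ by the $\wop^0$ operator $A'_j=(1+\D^2)^{\frac12\sum_{n<j}k_n}A_j(1+\D^2)^{-\frac12\sum_{n\le j}k_n}$ so that all the order lives in a single factor $(1+\D^2)^{|k|/2}$ on the right, and then integrates by parts $M-m$ times in $\lambda$ (writing $\lambda^{-p/2-r}$ as an iterated derivative) to raise $R^{m+1}$ to $R^{M+1}$. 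The Leibniz rule and Corollary~\ref{cr:deriv} produce a finite sum with no remainder, each term of the form (uniformly bounded)$\cdot R^{M+1-|k|/2}$, and the trace and $s$-integrability are read off from \cite[Lemmas 5.3, 5.4]{CPRS2}. The poles at $\bN-p/2$ come from the Gamma prefactor this integration-by-parts introduces.

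Your commutation scheme, by contrast, produces a finite Taylor sum \emph{plus a remainder} of the form $A_0RA_1^{(N)}R^{N+1}\cdots$; the Cauchy evaluation you invoke handles only the polynomial part, and the remainder still needs a direct trace-norm bound on the $\lambda$-integral---which is exactly the estimate the paper performs from the outset. You do not say how you would treat this remainder. Also, your stated decay $\phi(|B^r(s)|)\le C(1+s^2)^{p/2-m-\Re(r)+|k|/2}$ has a stray $p/2$; with that exponent the integrability condition would be $1+\alpha+|k|+p-2m<2\Re(r)$, not the one you (correctly) quote. The right bound, via $\phi((1+s^2+\D^2)^{-a})\le C_\epsilon(1+s^2)^{(p+\epsilon)/2-a}$, has exponent $\epsilon/2-m-\Re(r)+|k|/2$. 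Finally, for Part~2 the paper does not expand $\hat R_s(\lambda)$ at all: it simply bounds $\Vert\hat R_s(\lambda)\Vert_\infty$ by \cite[Lemma 5.1]{CPRS2} and feeds one extra bounded factor into the Part~1 estimate, which is considerably lighter than your resolvent-series expansion.
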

\begin{proof}
The restriction of $\phi$ to the fixed point algebra $\cM := \cN^{\sigma^{\phi}}$ is a semifinite trace. 
By assumption, we have $(1 + \cD^{2})^{-1/2} \in \cM$, so $R_{s}(\lambda) \in \cM$, and $A_{0} A_{1} \cdots A_{m}$ 
is affiliated to $\cM$. Hence, the estimates in this proof will be done in the von Neumann algebra $\cM$, 
and we denote the trace norm, with respect to $\phi$ on $\cM$, by $\Vert \cdot \Vert_{1}$.


To prove statement 1, the strategy is to use the fundamental theorem of calculus, at first just doing norm convergence
of integrals and norm differentiability. 
We abbreviate $R := R_s(\lambda)$, fix $k_0,\dots,k_m$ as in the statement, and with $\Re(r)>0$ sufficiently large, we have for any integer $M>m$
\enval{
&\frac{1}{2\pi i} \int_l \lambda^{-p/2-r}A_0 R A_1 R \cdots R A_m R d\lambda \nonumber \\
& \qquad = \frac{1}{2\pi i} \int_l 
\lambda^{-p/2-r}A_0RA_1R^{-1}R^2A_2R^{-2}\cdots R^mA_mR^{-m}R^{m+1}d\lambda \nonumber \\
&\qquad = \frac{(-1)^{M-m}}{2\pi i(p/2+r-1)(p/2+r-2)\cdots(p/2+r-(M-m))}\times \nonumber \\
& \qquad\qquad \times \int_l \frac{d^{M-m}}{d\lambda^{M-m}}
\left(\lambda^{-p/2-r+(M-m)}\right)
A_0RA_1R^{-1}R^2A_2R^{-2}\cdots R^mA_mR^{-m}R^{m+1}d\lambda \nonumber \\
& \qquad = \frac{\Gamma(p/2 + r - (M-m))}{2\pi i \, \Gamma(p/2+r)} \int_l \lambda^{-p/2-r+(M-m)} \sum_{j=0}^{M-m} \times \nonumber \\
& \qquad \qquad \times \bca M-m \\ j \eca \frac{d^j}{d\lambda^j}\!
\left(A_0RA_1R^{-1}R^2A_2R^{-2}\cdots R^mA_mR^{-m}\right)\!\!\frac{d^{M-m-j}}{d\lambda^{M-m-j}}(R^{m+1})d\lambda. \label{eqn_ch3_ftc_derivs}
}

Iterating the derivative $\frac{d}{d\lambda}(R^{m+1}) = -(m+1)R^{m+2}$ yields
\enveqn{
\frac{d^{M-m-j}}{d\lambda^{M-m-j}}(R^{m+1}) = (-1)^{M-m-j} \left( \prod_{n = m+1}^{M+1-j} n \right) R^{M+1-j}.
}
Now we consider
\enveqn{
\frac{d^j}{d\lambda^j} \left(A_0 R A_1 R^{-1} R^2 A_2 R^{-2} \cdots R^m A_m R^{-m} \right).
}
We would like to apply Lemma \ref{lem:bdd} to this term, however recall that each $A_{j} \in \wop^{k_{j}}$ and not $\wop^{0}$. So, we rewrite
\envaln{
&A_0 R A_1 R^{-1} R^2 A_2 R^{-2} \cdots R^m A_m R^{-m} \\
&\qquad = A_0 (1 + \cD^{2})^{-k_{0}/2} \Big(R (1 + \cD^{2})^{k_{0}/2} A_1 (1 + \cD^{2})^{-(k_{0}+k_{1})/2} R^{-1} \Big) \times \\
& \qquad \qquad \times \Big( R^2 (1 + \cD^{2})^{(k_{0}+k_{1})/2} A_2 (1 + \cD^{2})^{-(k_{0}+k_{1}+k_{2})/2} R^{-2} \Big) \times \cdots \times \\
& \qquad \qquad \qquad \times \Big( R^m (1 + \cD^{2})^{(|k|-k_{m})/2} A_m (1 + \cD^{2})^{-|k|/2} R^{-m} \Big) (1 + \cD^{2})^{|k|/2}.
}
By definition we have $A_{j} (1 + \cD^{2})^{-k_{j}/2} \in \wop^{0}$, so using Lemma \ref{lem:bdd-conj}, 
we now find that $(1 + \cD^{2})^{s} A_{j} (1 + \cD^{2})^{-s-k_{j}/2} \in \wop^{0}$ for all $s \in \bR$. Hence, we define
\enveqn{
A'_{j} := (1 + \cD^{2})^{\half \sum_{n = 0}^{j-1} k_{n}} \, A_{j} \, (1 + \cD^{2})^{- \half \sum_{n = 0}^{j} k_{n}} \in \wop^{0},
}
so that
\enveqn{
A_0 R A_1 R^{-1} R^2 A_2 R^{-2} \cdots R^m A_m R^{-m} = A'_0 R A'_1 R^{-1} R^2 A'_2 R^{-2} \cdots R^m A'_m R^{-m} (1 + \cD^{2})^{|k|/2}.
}
The purpose of introducing $A'_j$ is to move all the $\wop^{k_{j}}$ behaviour into the factor $(1 + \cD^{2})^{|k|/2}$ on the right.

We now invoke Corollary \ref{cr:deriv}, and find that each factor $R^{j} A'_{j} R^{-j}$ is norm differentiable with respect to $\lambda$. 
Indeed, by Equation \eqref{eqn_ddl_rtr} we have 
\enveq{
\label{eqn_ch3_diff_rn_bdd}
\frac{d^n}{d\lambda^n} R^{j} A'_{j} R^{-j} = R^{n} B(\lambda)
}
for $n \geq 0$, and some operator $B(\lambda)$ uniformly bounded in $s, \, \lambda$. So, we apply the chain rule to 
\envaln{
&\frac{d^j}{d\lambda^j} \left(A'_0 R A'_1 R^{-1} R^2 A'_2 R^{-2} \cdots R^m A'_m R^{-m} \right) (1 + \cD^{2})^{|k|/2} \\
& \qquad \qquad \qquad \qquad \qquad \qquad \qquad \qquad = \frac{d^j}{d\lambda^j} \left(A_0 R A_1 R^{-1} R^2 A_2 R^{-2} \cdots R^m A_m R^{-m} \right),
}
and use Equation \eqref{eqn_ch3_diff_rn_bdd} to compute the derivatives. Then Lemma \ref{lemma_bdd_diff_resolvent} 
allows us to move each resolvent $R^{n}$ arising from Equation \eqref{eqn_ch3_diff_rn_bdd} to the left, which gives
\enveqn{
\frac{d^j}{d\lambda^j} \left(A_0 R A_1 R^{-1} R^2 A_2 R^{-2} \cdots R^m A_m R^{-m} \right) = R^{j} B_{j}(A'_{0}, \dots, A'_{m}) (1 + \cD^{2})^{|k|/2},
}
where $B_{j}(A'_{0}, \dots, A'_{m}) \in \wop^{0}$ is uniformly bounded in $s, \, \lambda$.

We absorb the constants $(-1)^{M-m-j} \left( \prod_{n = m+1}^{M+1-j} n \right)$ into $B_{j}(A'_{0}, \dots, A'_{m})$, and 
apply the derivative computations to Equation \eqref{eqn_ch3_ftc_derivs}, which yields
\envaln{
& \frac{1}{2\pi i} \int_l \lambda^{-p/2-r}A_0 R A_1 R \cdots R A_m R d\lambda \\
&\qquad \qquad = \frac{\Gamma(p/2 + r - (M-m))}{2\pi i \, \Gamma(p/2+r)} \int_l \lambda^{-p/2-r+(M-m)} \sum_{j=0}^{M-m} \times\\
&\qquad \qquad \qquad \quad \times \bca M-m \\ j \eca R^{j} B_{j}(A'_{0}, \dots, A'_{m}) (1 + \cD^{2})^{|k|/2} R^{M+1-j} d\lambda\\
&\qquad \qquad = \frac{\Gamma(p/2 + r - (M-m))}{2\pi i \, \Gamma(p/2+r)} \int_l \lambda^{-p/2-r+(M-m)} \sum_{j=0}^{M-m} \times\\
&\qquad \qquad \qquad \quad \times \bca M-m \\ j \eca R^{j} B_{j}(A'_{0}, \dots, A'_{m}) R^{-j} (1 + \cD^{2})^{|k|/2} R^{|k|/2} R^{M+1-|k|/2} d\lambda,
}
where the square roots use the principal branch of $\log$.

For each $j$, the operator $R^{j} B_{j}(A'_{0}, \dots, A'_{m}) R^{-j}$ is uniformly bounded in 
$s,\,\lambda$ by Lemma \ref{lemma_bdd_diff_resolvent} and the uniform boundedness of $B_{j}(A'_{0}, \dots, A'_{m})$. 
Also, the operator $(1 + \cD^{2})^{|k|/2} R^{|k|/2}$ is uniformly bounded in $s, \, \lambda$, so we are left with estimating 
$R^{M+1-|k|/2}$. The trace estimate for the resolvent in \cite[Lemma 5.3]{CPRS2} states that for  $M$ large enough and  all 
$\epsilon>0$, there is a constant $C_\epsilon>0$ such that
\begin{equation}
\Vert R^{M+1-|k|/2}\Vert_1\leq C_\epsilon ((1/2+s^2-a)^2+v^2)^{-(M+1-|k|/2)/2+(p/4+\epsilon)}.
\label{eq:cee-eps}
\end{equation}
This estimate, and the uniform boundedness of each $R^{j} B_{j}(A'_{0}, \dots, A'_{m}) R^{-j}$, implies that
\enveqn{
\frac{1}{2\pi i}\int_l \lambda^{-p/2-r}A_0RA_1R\cdots RA_mRd\lambda \in \mathrm{Dom}(\phi)
}
for $|k|-2m + \epsilon<2\Re(r)$, for all $\epsilon>0$. We may apply this estimate only when $r \neq (M-j)-p/2$ as the prefactor 
\enveqn{
\frac{\Gamma(p/2 + r - (M-m))}{2\pi i \, \Gamma(p/2+r)} = \frac{1}{2\pi i(p/2+r-1)(p/2+r-2)\cdots(p/2+r-(M-m))}
}
has a pole at these points. So now we estimate
\enveqn{
\int_0^\infty s^\alpha\,\phi \left(\frac{1}{2\pi i}\int_l \lambda^{-p/2-r}A_0RA_1R\cdots RA_mRd\lambda\right)ds
}
in trace norm (recall that we regard $\phi$ as a trace on the fixed point algebra $\cM$). The calculations above show that the trace norm is bounded by
\envaln{
& \frac{|\Gamma(p/2 + r - (M-m))|}{2\pi |\Gamma(p/2+r)|} \int_0^\infty s^\alpha\, \int_{-\infty}^\infty \sqrt{a^2+v^2}^{-p/2-\Re(r)+(M-m)} \times \\
& \qquad \qquad \times \sum_{j=0}^{M-m}\binom{M-m}{j} \Vert R^{j}B_{j}(A'_0,A'_1,\dots,A'_m)R^{-j}\Vert_\infty\,\Vert R^{M+1-|k|/2}\Vert_1 dvds\\
&\leq
\sum_{j=0}^{M-m}\binom{M-m}{j}
\frac{|\Gamma(p/2 + r - (M-m))|}{2\pi |\Gamma(p/2+r)|} \,  C_{\epsilon}' \times\\
&\times\int_0^\infty s^\alpha\, \int_{-\infty}^\infty \sqrt{a^2+v^2}^{(M-m)-p/2-\Re(r)}
\sqrt{(1/2+s^2-a)^2+v^2}^{|k|/2-M-1+(p+\epsilon)/2} dvds,
}
where the constant $C_\epsilon'$ incorporates the constant from the estimate in Equation \eqref{eq:cee-eps} and the constant coming from
$\Vert R^{j}B_j(A'_0,A'_1,\dots,A'_m)R^{-j}\Vert_\infty\leq C$.
Now by \cite[Lemma 5.4]{CPRS2}, the double integral converges for 
\enveqn{
(\alpha+|k|-M)+(p+\epsilon-M)<1 \quad \text{and} \quad (\alpha+|k|)-2m+\epsilon-2\Re(r)<-1.
}
The first constraint can always be satisfied by taking $M$ sufficiently large. The second holds
precisely when $\alpha+|k|+1-2m<2\Re(r)$, by choosing $\epsilon$ small enough.

Statement 2 of the lemma is proved just as above, with the extra $\hat{R}_s(\lambda)$
just estimated in operator norm, using \cite[Lemma 5.1]{CPRS2}:
\enveqn{
\Vert \hat{R}_s(\lambda)\Vert_\infty\leq (v^2+(1+s^2-a-s\Vert K \Vert_\infty)^2)^{-1/2},
}
and the general integral estimate \cite[Lemma 5.4]{CPRS2}.
\end{proof}

\subsection{Existence of the resolvent cocycle for weakly $QC^\infty$ modular spectral triples}
\label{subsec:resolvent}

First we explicitly define the resolvent cocycle associated to a modular spectral triple, again just working in the odd case.
The definitions in the even case can be deduced from \cite{CPRS3}.
\begin{defn}
\label{expectation}
Let $(\cA,\cH,\cD,\cN,\phi)$ be a weakly $QC^\infty$ odd modular spectral triple of dimension 
$p \geq 1$. Let $N=[p/2]+1$ be the least integer strictly greater than $p/2$. 
Let $m$ be an odd integer, $1 \leq m \leq 2N-1$, and let  $A_j \in \wop^{k_j}$, $j=0,\dots,m$,  be operators whose product $A_{0} A_{1} \cdots A_{m}$ 
is $\sigma^{\phi}_{t}$-invariant and affiliated to $\cM$. For $2 \Re(r) > (k_{0} + \dots + k_{m}) + 1 - m$, $r\not\in\N-p/2$, define
\enveqn{
\langle A_0,\dots,A_m\rangle_{m,s,r} := \frac{1}{2\pi i} \phi \left( \int_l \lambda^{-p/2-r} A_0R_s(\lambda)A_1\cdots A_mR_s(\lambda)d\lambda\right).
}
The resolvent cocycle $(\Phi^r_m)_{m=1,3,\dots,2N-1}$ is defined to be
\enveqn{
\Phi_m^r(a_0,a_1,\dots,a_m) :=  \frac{-2\,\sqrt{2\pi i}}{\Gamma((m+1)/2)} \int_0^\infty s^m\langle a_0,[\cD,a_1],\dots, [\cD,a_m]\rangle_{m,s,r}ds,
}
for $a_{i} \in \cA$ satisfying $a_{0} a_{1} \cdots a_{m} \in \cM$.
\end{defn}

We  observe that $\Phi_m^r$ is finite for $\Re(r)>(1-m)/2$, by Lemma \ref{lemma_Bs_trace}. 
In this subsection we show that for weakly smooth modular spectral triples, $(\Phi_m^r)_{m=1,\dots,2N-1}$ defines a twisted 
$b,B$ cocycle modulo functions holomorphic in the half-plane $r > (1-p)/2$.

We start by presenting the $s$- and $\lambda$-tricks, which are the main tools needed to
prove continuity of the resolvent cocycle. These tricks appeared in \cite{CPRS2,CPRS3,CPRS4}
without appropriate justification for the convergence of the derivatives in trace norm. In \cite{CGRS2}
the justification was given with the aid of the pseudodifferential calculus. Here we present
a different proof using only the weak $QC^\infty$ hypothesis.

\begin{lemma}[$s$-trick] 
Let $(\A,\H,\D)$ be a weakly $QC^\infty$ odd modular
spectral triple relative to $(\cn,\phi)$ of dimension $p\geq 1$.
For any integers $m\geq 0, k\geq 1$
and 
operators $A_0,\dots,A_m$ with $A_j\in \wop^{k_j}$, and $2\Re(r)>k+2\sum k_j-2m$, $r\not\in \N-p/2$,
we may choose $r$ with $\Re(r)$ sufficiently large such that
\begin{align*} 
k\int_0^\infty s^{k-1}\langle A_0,\dots,A_m\rangle_{m,s,r}ds=
-2\sum_{j=0}^m\int_0^\infty s^{k+1}\langle 
A_0,\dots,A_j, 1, A_{j+1},\dots,A_m\rangle_{m+1,s,r}ds.
\end{align*}
\end{lemma}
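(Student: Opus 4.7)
The plan is to integrate by parts in $s$ and exploit the elementary identity
\[
\frac{\partial}{\partial s} R_s(\lambda) = 2s\, R_s(\lambda)^2,
\]
which is immediate from $R_s(\lambda) = (\lambda - 1 - s^2 - \cD^2)^{-1}$. Setting $f(s) := \la A_0, \dots, A_m\ra_{m,s,r}$, I would apply the product rule to the operator product $A_0 R_s(\lambda) A_1 R_s(\lambda) \cdots A_m R_s(\lambda)$: each differentiated resolvent factor becomes $2s\, R_s(\lambda)^2 = 2s \cdot R_s(\lambda)\cdot 1 \cdot R_s(\lambda)$, which is precisely the ``insertion of a $1$'' after $A_j$ in the $(m{+}1)$-fold bracket. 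This yields
\[
f'(s) = 2s\sum_{j=0}^m \la A_0, \dots, A_j, 1, A_{j+1}, \dots, A_m\ra_{m+1,s,r}.
\]

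Next I would write $k\, s^{k-1}\,ds = d(s^k)$ and integrate by parts:
\[
k\int_0^\infty s^{k-1} f(s)\,ds = \bigl[s^k f(s)\bigr]_0^\infty - \int_0^\infty s^k f'(s)\,ds,
\]
then substitute the formula for $f'$ to produce exactly the right-hand side of the claim. The boundary term at $s=0$ vanishes automatically since $k \geq 1$. For $s \to \infty$, the trace-norm estimate used in the proof of Lemma \ref{lemma_Bs_trace}(1) bounds $|f(s)|$ by a negative power of $s$ whose exponent improves as $\Re(r)$ grows; hence $s^k f(s) \to 0$ once $\Re(r)$ is taken sufficiently large, which is precisely the freedom allowed by the statement of the lemma.

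The main obstacle is justifying the exchange of $\partial_s$ with both the contour integral $\int_l d\lambda$ and the trace $\phi$. I would handle this by dominated convergence, using the resolvent identity
\[
R_{s+\epsilon}(\lambda) - R_s(\lambda) = \bigl((s+\epsilon)^2 - s^2\bigr)\, R_s(\lambda) R_{s+\epsilon}(\lambda)
\]
to expand the difference quotient for the integrand into a finite sum of products, each containing one extra resolvent factor and the inserted element $1 \in \wop^0$. Lemma \ref{lemma_Bs_trace}(1), applied with $m$ replaced by $m+1$ and $|k|$ unchanged, then supplies a uniform trace-norm bound on each summand, integrable in both $\lambda \in l$ and on a compact $s$-neighbourhood, thanks to the hypothesis $2\Re(r) > k + 2\sum k_j - 2m$, which is strictly stronger than the condition $1 + (k+1) + |k| - 2(m+1) < 2\Re(r)$ required by Lemma \ref{lemma_Bs_trace}(1) for the differentiated integrand. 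Passing $\partial_s$ through both $\int_l$ and $\phi$ is therefore legitimate, and the resulting differentiated integrand coincides with the formal answer obtained above, completing the computation.
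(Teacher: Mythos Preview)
Your proposal is correct and follows essentially the same route as the paper: differentiate the resolvent product in $s$ via the resolvent identity, justify the trace-norm convergence of the difference quotients, and then apply the fundamental theorem of calculus to $s^k\langle A_0,\dots,A_m\rangle_{m,s,r}$. The only point where you are slightly looser than the paper is in invoking Lemma~\ref{lemma_Bs_trace}(1) \emph{directly} on the difference-quotient terms: those terms contain a mixed product $R_s(\lambda)R_{s+\epsilon}(\lambda)$ rather than only $R_s(\lambda)$'s, so Lemma~\ref{lemma_Bs_trace} does not literally apply; the paper instead re-runs the integration-by-parts-in-$\lambda$ argument of that lemma with the extra (uniformly bounded) resolvent inserted, which is the small adaptation your dominated-convergence step actually needs.
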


\begin{proof}
The only thing that needs justification is the trace norm derivative formula
$$
\frac{d}{ds}\langle A_0,\dots,A_m\rangle_{m,s,r}=
2s\sum_{k=0}^m\langle A_0,\dots,A_k,1,A_{k+1},\dots,A_m\rangle_{m+1,s,r}
$$
for suitable $m,s,r$ and weak pseudodifferential operators $A_j$. So start with the difference
quotient leading to one of the terms on the right hand side.
\begin{align*}
&\frac{1}{2\pi i}\int_l \lambda^{-p/2-r}A_0R\cdots RA_k
\left(\frac{R_{s+\epsilon}-R_s}{\epsilon}\right)A_{k+1}R\cdots RA_mRd\lambda\\
&=(2s+\epsilon)\frac{1}{2\pi i}\int_l \lambda^{-p/2-r}A_0R\cdots RA_k
R_{s+\epsilon}R_sA_{k+1}R\cdots RA_mRd\lambda.
\end{align*}
Now repeat the trick of Lemma \ref{lemma_Bs_trace}, giving
\begin{align*}
&=(2s+\epsilon)\frac{1}{2\pi i(p/2+r-1)(p/2+r-2)\cdots(p/2+r-(2M-1-m))}\times\\
&\times\int_l \lambda^{-p/2-r+(2M-1-m)}
\sum_{j=0}^{2M-1-m}\binom{2M-1-m}{j}\\
&\times \frac{d^j}{d\lambda^j}
\left(a_0Rda_1R^{-1}R^2da_2R^{-2}\cdots 
R^kA_kR^{-k}R_{s+\epsilon}R^{k+1}A_{k+1}R^{-k-1}\cdots  R^mda_mR^{-m}\right)\times\\
&\times\frac{d^{2M-1-m-j}}{d\lambda^{2M-1-m-j}}(R^{m+1})d\lambda.
\end{align*}
Performing the derivatives yields a formula similar to that in the proof of Lemma \ref{lemma_Bs_trace}, 
but in place of the uniformly bounded $B_j$'s, we have uniformly
bounded operators {\em and} one  extra resolvent. Thus
the same trace norm estimates apply and we see that the difference quotients converge in 
trace norm. Thus $\langle  A_0,\dots,A_m\rangle_{m,s,r}$
is trace norm differentiable in $s$, and the derivative goes to zero as $\lambda\to a\pm i\infty$. 
The proof is completed by applying the fundamental theorem of calculus to 
\begin{equation*}
\frac{d}{ds}\left(s^k\langle  A_0,\dots,A_m\rangle_{m,s,r}\right).   \qedhere
\end{equation*}
\end{proof}

A completely analogous argument proves the $\lambda$-trick with our weak smoothness hypotheses.

\begin{lemma}[$\lambda$-trick] 
Let $(\A,\H,\D)$ be a weakly $QC^\infty$ odd modular
spectral triple relative to $(\cn,\phi)$ of dimension $p\geq 1$.
For any integer $m\geq0$, 
operators  $A_j\in \wop^{k_j}$, $j=0,\dots,m$, and $r$ such that $2\Re(r) > 2\sum 
k_j-2m$, $r\not\in \N-p/2$, we have
\begin{equation*} 
-(p/2+r)\langle A_0,\dots,A_m\rangle_{m,s,r+1}=\sum_{k=0}^m\langle 
A_0,\dots,A_k, 1, A_{k+1},\dots,A_m\rangle_{m+1,s,r}.
\end{equation*}
\end{lemma}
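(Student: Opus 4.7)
The plan is to derive the identity via integration by parts in $\lambda$ along the contour $l$. Writing
\begin{equation*}
\lambda^{-p/2-r-1} = \frac{-1}{p/2+r}\,\frac{d}{d\lambda}\lambda^{-p/2-r},
\end{equation*}
we may rewrite
\begin{equation*}
-(p/2+r)\,\langle A_0,\dots,A_m\rangle_{m,s,r+1} = \frac{1}{2\pi i}\,\phi\!\left(\int_l \Bigl(\frac{d}{d\lambda}\lambda^{-p/2-r}\Bigr)A_0R_s(\lambda)A_1\cdots A_mR_s(\lambda)\,d\lambda\right).
\end{equation*}
Integrating by parts in $\lambda$, then using $(d/d\lambda)R_s(\lambda) = -R_s(\lambda)^2$ together with the product rule, gives
\begin{equation*}
\frac{d}{d\lambda}\bigl(A_0 R_s(\lambda) A_1 R_s(\lambda)\cdots A_m R_s(\lambda)\bigr) = -\sum_{k=0}^m A_0 R_s(\lambda)A_1\cdots A_k R_s(\lambda)^{2} A_{k+1}\cdots A_m R_s(\lambda),
\end{equation*}
whose $k$-th summand is precisely the integrand of $\langle A_0,\dots,A_k,1,A_{k+1},\dots,A_m\rangle_{m+1,s,r}$. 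Collecting signs yields the claimed formula.

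The real content is the justification of this integration by parts at the level of $\mathrm{Dom}(\phi)$: the boundary contributions at $\lambda=a\pm i\infty$ must vanish in trace norm, and differentiation in $\lambda$ must commute with $\phi$ and with the line integral. This is the same technical hurdle that arose in the $s$-trick, and the same device handles it. With the shorthand $R=R_s(\lambda)$, one iterates the integration-by-parts construction from the proof of Lemma \ref{lemma_Bs_trace}, shifting sufficiently many powers of $\lambda$ onto the operator side so as to expose extra resolvents. Rewriting each conjugation $R^j A_j R^{-j}$ via Corollary \ref{cr:deriv}, and absorbing the $\wop^{k_j}$ behaviour into a single rightmost factor $(1+\cD^2)^{|k|/2}$ through the substitution $A'_j := (1+\cD^2)^{\frac{1}{2}\sum_{n=0}^{j-1}k_n}\,A_j\,(1+\cD^2)^{-\frac{1}{2}\sum_{n=0}^{j}k_n}\in\wop^0$, one ends with integrands of the form $R^{j}B_j(A'_0,\dots,A'_m)R^{-j}(1+\cD^2)^{|k|/2}R^{M+1-|k|/2}$ whose trace norm is controlled by the estimate \eqref{eq:cee-eps}. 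This simultaneously produces trace-norm convergence of the line integrals and the vanishing of the boundary terms. The hypothesis $2\Re(r)>2\sum k_j-2m$ is exactly what is needed to close these estimates, paralleling the inequality $2\Re(r)>k+2\sum k_j-2m$ from the $s$-trick.

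The main obstacle, as in the $s$-trick, is therefore analytic rather than algebraic: verifying that the difference quotients in $\lambda$ converge in trace norm rather than merely in operator norm. Since $(d/d\lambda)R_s(\lambda)=-R_s(\lambda)^2$ is the direct analog of the derivative $(d/ds)R_s(\lambda)=-2sR_s(\lambda)^2$ used in the $s$-trick, the same telescoping argument carries over essentially verbatim, producing a remainder with one extra resolvent that is again controlled by the Lemma \ref{lemma_Bs_trace} estimates. Once trace-norm differentiability is established, the fundamental theorem of calculus, applied to the scalar-valued function of $\lambda$, delivers the identity.
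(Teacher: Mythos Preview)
Your proposal is correct and matches the paper's approach exactly: the paper's proof consists of the single sentence ``A completely analogous argument proves the $\lambda$-trick with our weak smoothness hypotheses,'' and you have spelled out precisely that analogy---integration by parts in $\lambda$ along $l$, with the trace-norm differentiability and vanishing of boundary terms justified by the same resolvent-shifting device from Lemma~\ref{lemma_Bs_trace} that underpins the $s$-trick.
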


\begin{prop}
\label{hii}
Let $(\A,\H,\D)$ be a weakly $QC^\infty$ odd modular
spectral triple relative to $(\cn,\phi)$ of dimension $p\geq 1$.
Let $m=1,3,\dots,2N-1$. Let $\A\otimes\A^{\otimes m}$ have the projective tensor
product topology coming from the seminorms $a\mapsto \Vert WR^k\circ WL^l(a)\Vert_\infty +\Vert WR^k\circ WL^l([\D,a])\Vert_\infty$ on $\A$, and restrict this topology
to the subspace $(\A\otimes\A^{\otimes m})^{\sigma^\phi}$ of $\sigma^\phi$ invariant tensors.
(This can be called the weak $QC^\infty$-topology).
Then the maps
 $$
(\A\otimes\A^{\otimes m})^{\sigma^\phi}\ni a_0\otimes\dots\otimes a_m\mapsto\big[r\mapsto\Phi_{m}^r(a_0,\dots,a_m)\big]\,,
 $$
are continuous multilinear maps from $ (\A\otimes\A^{\otimes m})^{\sigma^\phi}$ to the
functions holomorphic in  $\{z\in\C:\ \Re(z)>(1-m)/2,\,z\not\in\N-p/2\}$, with the 
topology of uniform convergence on compacta.
\end{prop}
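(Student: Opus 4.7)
The plan is to obtain the conclusion as a packaging of the estimates already produced in the proof of Lemma \ref{lemma_Bs_trace}(1), applied to the operators $A_0=a_0$ and $A_j=[\cD,a_j]$, $j=1,\dots,m$, which by the weak $QC^\infty$ hypothesis all lie in $\wop^0$ (so $|k|=0$ and the bookkeeping operators $A'_j$ coincide with $A_j$). First I would record that with $\alpha=m$ and $|k|=0$, the integrability constraint $1+\alpha+|k|-2m<2\Re(r)$ of Lemma \ref{lemma_Bs_trace}(1) becomes precisely $2\Re(r)>1-m$, so the $s$-integral in the definition of $\Phi_m^r$ converges absolutely in trace norm throughout the stated half plane, with the poles at $r\in\N-p/2$ coming only from the gamma-function prefactor produced by the iterated integration-by-parts in $\lambda$.

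Next I would show multilinear continuity in the weak $QC^\infty$ topology by tracking constants. Going back into the proof of Lemma \ref{lemma_Bs_trace}, the trace-norm bound on the $\lambda$-integrand has the shape
\enveqn{
\bigl\Vert R^{j}B_{j}(A_0,\dots,A_m)R^{-j}\bigr\Vert_\infty\,\bigl\Vert R^{M+1}\bigr\Vert_1,
}
where each $B_j$ is a polynomial in the iterated commutators $[\D^2,\cdot]^{i}A_q$, equivalently in the operators $WL^k\circ WR^l(A_q)$, with coefficients from the binomial expansion. Conjugation of such $B_j$ by $R^j$ is uniformly bounded by Lemma \ref{lem:bdd-conj}, and the operator norm of $B_j(A_0,\dots,A_m)$ itself is dominated by a finite product of seminorms $\Vert WL^k\circ WR^l(a_0)\Vert_\infty$ and $\Vert WL^k\circ WR^l([\cD,a_j])\Vert_\infty$ for $k,l$ bounded in terms of $M$ and $m$. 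Combining with the explicit $s$- and $v$-integral bound \cite[Lemma 5.4]{CPRS2}, one obtains
\enveqn{
|\Phi_m^r(a_0,\dots,a_m)|\le C(r)\,\prod_{q=0}^{m}P_q(a_0,\dots,a_m),
}
where $C(r)$ is locally bounded on the stated domain and each $P_q$ is a finite product of the projective tensor seminorms defining the weak $QC^\infty$ topology. This yields the asserted joint continuity of $(a_0,\dots,a_m)\mapsto\Phi_m^r(a_0,\dots,a_m)$ uniformly on compact subsets of the $r$-domain.

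For the holomorphy claim I would apply a Morera / Fubini argument. For fixed $s$ and $\lambda\in l$ the integrand $\lambda^{-p/2-r}$ is entire in $r$, hence the inner trace is holomorphic, and after the $M$-fold integration by parts the gamma-prefactor accounts for the only poles, located at $r\in\N-p/2$. The trace-norm bounds proved above are uniform on each compact subset of $\{\Re(r)>(1-m)/2\}\setminus(\N-p/2)$, so by Fubini the iterated integrals commute with a contour integral in $r$ around any closed loop in the domain; this vanishes by Cauchy's theorem applied to the integrand, and Morera's theorem gives holomorphy of $\Phi_m^r$. The same uniformity gives the stated topology of uniform convergence on compacta.

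The main obstacle is the bookkeeping in the second paragraph: one must verify that after the iterated $\lambda$-derivatives in the proof of Lemma \ref{lemma_Bs_trace}, the uniformly bounded operators $B_j(A_0,\dots,A_m)$ really are estimated by \emph{finitely many} weak $QC^\infty$ seminorms of the $a_i$ (with orders depending only on $M$, $m$, $p$), so that no limit of seminorms enters and the continuity is genuine for the projective tensor topology as defined.
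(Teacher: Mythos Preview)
Your approach is essentially correct, but it is genuinely different from the paper's route and the difference is instructive.

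You propose to establish continuity by revisiting the proof of Lemma \ref{lemma_Bs_trace} and tracking how the constants in the final estimate depend on the $A_j$'s through the operators $B_j$. This works in principle: the $B_j$ arise from iterating Corollary \ref{cr:deriv} and Lemma \ref{lemma_bdd_diff_resolvent}, and each such step introduces only finitely many applications of $WL$ (since $R^k T^{(k)}=R^k(1+\D^2)^k\,WL^k(T)$ with $R^k(1+\D^2)^k$ uniformly bounded), so the uniform bounds on $R^jB_jR^{-j}$ are indeed controlled by finitely many weak $QC^\infty$ seminorms of the $a_i$'s. Your Morera/Fubini argument for holomorphy is standard and is what the paper cites from \cite[Lemma 7.4]{CPRS2}. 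One small correction: the uniform bound on $R^jB_jR^{-j}$ comes from Lemma \ref{lemma_bdd_diff_resolvent}, not Lemma \ref{lem:bdd-conj}; the latter is about conjugation by powers of $(1+\D^2)^{1/2}$, not by resolvents.

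The paper takes a different path for continuity. Rather than unwinding the $B_j$'s, it first applies the $s$-trick and the $\lambda$-trick $(M-m)/2$ times each (with $M=2N-1$) to rewrite $\Phi_m^r$ as an explicit rational prefactor times a sum of expectations $\langle a_0,1^{k_0},da_1,\dots,da_m,1^{k_m}\rangle_{M,r-(M-m)/2,s}$. After this, the integrand factors cleanly as $a_0(R^{n_0}da_1R^{-n_0})(R^{n_0+n_1}da_2R^{-(n_0+n_1)})\cdots R^{M+1}$, and each conjugated factor $R^{n}da_iR^{-n}$ is directly estimated by Lemma \ref{lemma_bdd_diff_resolvent} in terms of weak $QC^\infty$ seminorms of $da_i$ alone, with $\Vert R^{M+1}\Vert_1$ carrying all of the $s,\lambda$ decay. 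The point is that the tricks manufacture the needed extra resolvents without ever differentiating in $\lambda$, so the composite operators $B_j$ never appear and no recursive bookkeeping is required. The paper also notes that with $M=2N-1\leq p+1$ the prefactor's poles land at $\Re(r)\leq (1-m)/2$, outside the region of interest.

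In short: your direct tracking of constants is valid but requires exactly the recursive unwinding you flag as the main obstacle; the paper's use of the $s$- and $\lambda$-tricks is a device precisely to sidestep that unwinding and make the seminorm dependence on the $a_i$ transparent.
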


\begin{proof}
Let us first fix  $r\in \C$ with $\Re(r)>(1-m)/2$, and set $M=2N-1$. 
Lemma \ref{lemma_Bs_trace} ensures that our functionals are
finite for these values of $r$, and it is an exercise (see \cite[Lemma 7.4]{CPRS2}) to show that these functionals are
holomorphic there. Thus all that we need to do 
is to improve the estimates 
to prove continuity. We do this, following \cite[Proposition 5.18]{CPRS4}, using the $s$- and $\lambda$-tricks. 
We recall that we have defined $M=2N-1$. 
By applying successively the $s$- and $\lambda$-tricks (which commute) $(M-m)/2$ times each, 
we obtain 
\begin{align}
\Phi_{m}^r(a_0,\dots,a_m)&=
2^{(M-m)/2}(M-n)!\prod_{l_1=1}^{(M-m)/2}\frac1{p/2+r-l_1}\prod_{l_2=1}^{(M-m)/2}\frac1{m+l_2}\nonumber\\
&\hspace{2cm}\times\sum_{|k|=M-m}
\int_0^\infty s^M
\langle a_0,1^{k_0},da_1,1^{k_1},\dots,da_m,1^{k_m}\rangle_{M,r-(M-m)/2,s}ds\,,
\label{haa}
\end{align}
where $1^{k_i}=1,1,\dots,1$ with $k_i$ entries. Since $M\leq p+1$, the poles 
associated to the prefactors are outside the region $\{z\in\mathbb C:\Re(z)>(1-m)/2\}$. 
Ignoring the prefactors, setting $n_i=k_i+1$ and $R:=R_{s,t}(\lambda)$, we need to deal 
with the integrals
$$
\int_0^\infty s^M\phi\Big( \gamma\int_l 
\lambda^{-p/2-r-(M-m)/2}a_0R^{n_0}da_1R^{n_1}\cdots da_mR^{n_m}d\lambda\Big)ds\,,
\qquad |n|=M+1\,,
$$
where $l$ is the vertical line $l=\{a+iv:v\in\R\}$ with $a=1/2$.

To estimate the trace norm (using the trace given by restricting $\phi$ to the invariant subalgebra $\cN^{\sigma^\phi}$) we first write
\begin{align*}
&a_0R^{n_0}da_1R^{n_1}\cdots da_mR^{n_m}\\
&\quad=a_0(R^{n_0}da_1 R^{-n_0})(R^{n_0+n_1}da_2R^{-(n_0+n_1)})\cdots (R^{n_0+\cdots+n_{m-1}}da_mR^{-(n_0+\cdots+n_{m-1})}) R^{n_0+\cdots+n_m}.
\end{align*}
Then, using \cite[Lemma 5.2]{CPRS2}, and the fact that $ |n|=M+1$, for each $\epsilon>0$ we obtain $C_\epsilon>0$ such that
\begin{align*}
&\|a_0R^{n_0}da_1R^{n_1}\cdots da_mR^{n_m}\|_1\\
&\leq
\| a_0(R^{n_0}da_1 R^{-n_0})(R^{n_0+n_1}da_2R^{-(n_0+n_1)})\cdots (R^{n_0+\cdots+n_{m-1}}da_mR^{-(n_0+\cdots+n_{m-1})})\|_\infty \, \|  R^{M+1}\|_1   \\
&\qquad\leq \|a_0R^{n_0}da_1R^{n_1}\cdots da_mR^{n_m}R^{-(M+1)}\|_\infty \,C_\epsilon\,((s^2+a^2)+v^2)^{-(M+1)/2+(p+\epsilon)/4}.
\end{align*}

%

The operator norm of the product yields a constant $C(a_0,a_1,\dots,a_m)$ 
depending on $a_0,a_1,\dots,a_m$, which varies continuously as the $a_j$ vary in a weak $QC^\infty$ continuous way. 
Integrating now shows that 
$$
|\Phi^r_m(a_0,\dots,a_m)|\leq |f(r)|\,C_{\epsilon,M,m}C(a_0,a_1,\dots,a_m),
$$
for a function $f$ continuous for $\Re(r)>(1-m)/2$, $r\not\in \N-p/2$ (coming from the prefactor and the integral)
and some constant $C_{\epsilon,M,m}$. 
\end{proof}

\begin{prop}
\label{cocycle}  
Let $(\A,\H,\D)$ be a weakly $QC^\infty$ odd modular
spectral triple relative to $(\cn,\phi)$ of dimension $p\geq 1$, and let $N=[p/2]+1$.
The collection of functionals $\Phi^r=\{\Phi_m^r\}_{m=1}^{2N-1}$, 
$m$ odd, is such that 
\begin{equation} (B^\sigma\Phi^r_{m+2}+b^\sigma\Phi^r_m)(a_0,\dots,a_{m+1})=0\ \ \ m=1,3,\dots,2N-3,\ \ \ 
(B^\sigma\Phi^r_1)(a_0)=0
\label{coc}
\end{equation}
where the $a_i\in\A$, $\sigma=\sigma^\phi_i$ and $b^\sigma, B^\sigma$ are the twisted coboundary operators of cyclic cohomology. 
Moreover, there is a $\delta'$, $0<\delta'<1$
such that  
$b^\sigma\Phi_{2N-1}^r(a_0,\dots,a_{2N})$ is a holomorphic function of $r$ for 
$\Re(r)>-p/2+\delta'/2$.
\end{prop}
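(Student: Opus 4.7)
The plan is to adapt the proof of \cite[Proposition 7.7]{CPRS2} and its twisted analogue in \cite{CPRS3} to the present setting, but to carry it out entirely by means of the $s$- and $\lambda$-tricks together with the trace-norm estimates of Lemma \ref{lemma_Bs_trace}, so that no pseudodifferential calculus is required. Throughout, the $\sigma$-twist with $\sigma=\sigma^\phi_i$ is compatible with $\phi$ because $\phi$ restricts to a faithful trace on $\cM=\cN^{\sigma^\phi}$ and satisfies the KMS relation $\phi(ST)=\phi(\sigma^\phi_i(T)S)$ whenever $ST,TS\in\cM$.

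For the cocycle identity \eqref{coc} with $m\in\{1,3,\ldots,2N-3\}$, I would expand $b^\sigma\Phi^r_m(a_0,\ldots,a_{m+1})$ using the definition of the twisted Hochschild coboundary: each summand has the form $\pm\Phi^r_m(\ldots,a_ja_{j+1},\ldots)$, plus the twisted cyclic contribution $(-1)^{m+1}\Phi^r_m(\sigma^\phi_i(a_{m+1})a_0,a_1,\ldots,a_m)$. Applying Leibniz $[\D,a_ja_{j+1}]=[\D,a_j]a_{j+1}+a_j[\D,a_{j+1}]$ inside the resolvent sandwich $\langle\cdot\rangle_{m,s,r}$ produces an almost-telescoping sum of $\langle\cdot\rangle_{m,s,r}$-terms; the residual ``defect'' pieces are those in which some $a_i$, without an accompanying commutator, sits next to a resolvent $R_s(\lambda)$. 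I would rewrite each such defect using
\[
[R_s(\lambda),a] \;=\; R_s(\lambda)[\D^2,a]R_s(\lambda) \;=\; R_s(\lambda)\bigl([\D,a]\D+\D[\D,a]\bigr)R_s(\lambda),
\]
which produces an expression with one extra commutator $[\D,a]$ and one extra resolvent, i.e.\ a summand of $\sum_k\langle a_0,[\D,a_1],\ldots,1,\ldots,[\D,a_m]\rangle_{m+2,s,r}$. The $\lambda$-trick then collects these into $-(p/2+r)^{-1}$ times $\langle\cdot\rangle_{m+1,s,r+1}$, and the $s$-trick adjusts the $s$-power from $s^m$ to $s^{m+2}$. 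Matching the relation $\Gamma((m+3)/2)=\tfrac{m+1}{2}\Gamma((m+1)/2)$ against the prefactors $-2\sqrt{2\pi i}/\Gamma((m+1)/2)$ and $-2\sqrt{2\pi i}/\Gamma((m+3)/2)$ of $\Phi^r_m$ and $\Phi^r_{m+2}$, and comparing signs and insertion patterns, identifies the result as $-(B^\sigma\Phi^r_{m+2})(a_0,\ldots,a_{m+1})$, proving the cocycle identity. The base case $(B^\sigma\Phi^r_1)(a_0)=0$ reduces to the trace property of $\phi|_\cM$ applied to $[a_0,R_s(\lambda)]$.

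The main obstacle is that all of these manipulations — Leibniz, commuting $a$'s past resolvents, and the repeated interchange of the $s$-, $\lambda$- and $\phi$-integrals implicit in the $s$- and $\lambda$-tricks — must be justified in trace norm under the weak $QC^\infty$ hypothesis alone. At each step the intermediate operators are products of elements of $\wop^0$ (namely the $a_i$'s and the $[\D,a_i]$'s) interleaved with resolvents, precisely the form covered by Lemma \ref{lemma_Bs_trace}(1) with $k_j\in\{0,1\}$. Each commutation of an $a$ past a resolvent introduces one extra $\wop^0$-factor together with one extra resolvent, so the net order is preserved and the integrability condition $1+\alpha+|k|-2m<2\Re(r)$ of Lemma \ref{lemma_Bs_trace}(1) continues to hold throughout. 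This is where the weak smoothness hypothesis is exactly strong enough for the argument to go through.

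For the final assertion, the same defect analysis applied to the top term $m=2N-1$ rewrites $b^\sigma\Phi^r_{2N-1}(a_0,\ldots,a_{2N})$ as the $s$-integral of a $\langle\cdot\rangle$-expression involving $2N+1$ commutators $[\D,a_j]$ and $2N+2$ resolvents, weighted by $s^{2N+1}$ — formally, $-B^\sigma\Phi^r_{2N+1}(a_0,\ldots,a_{2N})$, where $\Phi^r_{2N+1}$ is defined by the same formula as the other $\Phi^r_m$ even though it does not appear in the finite cocycle. Applying Lemma \ref{lemma_Bs_trace}(1) with $m$ replaced by $2N+1$, $|k|=0$, and $\alpha=2N+1$, gives convergence and holomorphicity for $\Re(r)>(1-(2N+1))/2=-N$. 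Since $N=[p/2]+1>p/2$ with $N$ an integer, we have $-N\leq -p/2-1/2$, so the half-plane $\Re(r)>-N$ strictly contains $\{\Re(r)>-p/2+\delta'/2\}$ for any $\delta'\in(0,1)$; any such $\delta'$ (for example $\delta'=1/2$) proves the claim.
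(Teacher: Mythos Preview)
Your approach is sound and closely parallels the paper's own proof, which simply cites \cite[Proposition~7.10]{CPRS2} for the cocycle identity (using the twisted tracial property of $\phi$ in place of the trace property) and then, for the holomorphicity of $b^\sigma\Phi^r_{2N-1}$, stops one step earlier than you do: the paper writes
\[
(b^\sigma\Phi^r_{2N-1})(a_0,\dots,a_{2N})
=\sum_j\int_0^\infty s^{2N-1}\langle a_0,[\D,a_1],\dots,[\D^2,a_j],\dots,[\D,a_{2N}]\rangle_{2N,r,s}\,ds
\]
and invokes $\A\subset\mathrm{OP}^0\Rightarrow[\D^2,\A]\subset\mathrm{OP}^1$ to feed Lemma~\ref{lemma_Bs_trace} with $|k|=1$. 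You instead push all the way to $-B^\sigma\Phi^r_{2N+1}$ and apply the lemma with $|k|=0$; this yields the slightly larger half-plane $\Re(r)>-N$ versus the paper's $\Re(r)>\tfrac12-N$, but either suffices.

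Two small corrections. First, your claim that ``each commutation of an $a$ past a resolvent introduces one extra $\wop^0$-factor'' is not quite right: $[R_s(\lambda),a]=R_s(\lambda)[\D^2,a]R_s(\lambda)$ introduces $[\D^2,a]\in\mathrm{OP}^1$, not $\wop^0$ --- this is precisely where the hypothesis $\A\subset\mathrm{OP}^0$ (as opposed to merely $\A\subset\wop^0$) enters. The net order count is still correct because of the extra resolvent, so the estimate survives. Second, the inequality $-N\leq -p/2-1/2$ in your final paragraph is false whenever the fractional part of $p/2$ exceeds $1/2$ (for instance $p=1.8$ gives $N=1$ and $-1>-1.4$). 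What you actually need, and what does hold, is simply $-N<-p/2$ (since $N=[p/2]+1>p/2$); this already gives $\{\Re(r)>-N\}\supseteq\{\Re(r)>-p/2+\delta'/2\}$ for every $\delta'\in(0,1)$, so your conclusion stands.
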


\begin{proof}
The proof is just as in \cite[Proposition 7.10]{CPRS2}, using the formula for the twisted coboundaries $b^\sigma,\,B^\sigma$, and the twisted tracial property of $\phi$, 
until we compute
\begin{equation}
(b^\sigma\Phi^r_{2N-1})(a_0,\dots,a_{2N})=\int_0^\infty s^m\langle a_0,[\D,a_1],\dots,[\D^2,a_j],\dots,[\D,a_{m+1}]
\rangle_{2N,r,s}ds.
\label{eq:bdry}
\end{equation}
Since $\A\subset {\rm OP}^0$ we have $[\D^2,\A]\subset {\rm OP}^1$, and then the 
proof is just as in \cite[Proposition 7.10]{CPRS2}. 
\end{proof}


We now specialise to the semifinite case so that we may relate the resolvent cocycle to the index problem 
(that is, to compute spectral flow). 
Proposition \ref{cocycle} establishes that the resolvent cocycle is almost a cocycle, so we 
have the following theorem, proven just as in \cite{CPRS2}.

\begin{thm}
\label{thm:weak-index}
Let $(\A,\H,\D)$ be a weakly $QC^\infty$ odd semifinite
spectral triple relative to $(\cn,\tau)$ of dimension $p\geq 1$. Let $N = [p/2] + 1$
be the least positive integer strictly greater than $p/2$ and let $u \in \cA$ be unitary.
Then
\enveqn{
sf_\tau(\D,u^*\D u) = \frac{1}{\sqrt{2\pi i}} \mathrm{Res}_{r = (1-p)/2} \left( \sum_{m=1,odd}^{2N-1}\Phi^r_m(Ch_m(u)) \right),
}
where $Ch_m(u)$ is defined to be 
$$
Ch_m(u)=(-1)^{(m-1)/2}\,((m-1)/2)!\, u^*\otimes u\otimes\cdots\otimes u\qquad (m+1)\ \mbox{entries}.
$$
\end{thm}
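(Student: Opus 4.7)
The plan is to adapt the proof of \cite[Section 8]{CPRS2}, checking that each step goes through under the weak $QC^\infty$ hypothesis by using Lemma \ref{lemma_Bs_trace} together with the $s$- and $\lambda$-tricks in place of the pseudodifferential calculus. Since we are in the semifinite (untwisted) case, $\sigma^\phi$ is trivial and $\cM = \cN$, so $b^\sigma = b$ and $B^\sigma = B$ are the ordinary Hochschild and Connes coboundaries.

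The starting point is the Carey--Phillips integral formula for spectral flow along the straight-line path $\D_t := \D + tu[\D,u^*]$, $t \in [0,1]$ (so that $\D_0 = \D$ and $\D_1 = u^*\D u$). For $\Re(r)$ sufficiently large,
$$
sf_\tau(\D, u^*\D u) = \frac{1}{\eta_r}\int_0^1 \tau\bigl(\dot\D_t\,(1+\D_t^2)^{-p/2-r}\bigr)\,dt + (\mathrm{endpoint\ terms}),
$$
where $\eta_r$ is an explicit ratio of $\Gamma$-functions with a simple pole at $r = (1-p)/2$ producing the residue in the statement, and the endpoint terms are holomorphic at that point. Writing $(1+\D_t^2)^{-p/2-r}$ as a contour integral of the resolvent $R_s(\lambda)$ of Lemma \ref{lem:bdd}, then Duhamel-expanding $(\lambda - 1 - \D_t^2)^{-1}$ around $(\lambda - 1 - \D^2)^{-1}$ in powers of the bounded perturbation $u[\D,u^*]$, converts the $t$-integral into $s$-integrals of precisely the type defining $\Phi^r_m$.

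The second step is to identify each term of order $m$ in the Duhamel expansion with $\langle u^*, [\D,u], [\D,u^*], \ldots, [\D,u]\rangle_{m,s,r}$ after cyclic rearrangement (only odd $m$ survive, by unitarity of $u$ and the cyclic property of $\tau$). Crucially, the termwise integrability in $s$ and holomorphy in $r$ needed to justify all interchanges of integrals and sums is supplied by Lemma \ref{lemma_Bs_trace}(2), which applies because $u, u^{*} \in \A \subset \wop^{0}$ and $[\D, u^{\pm 1}] \in \wop^{0}$ under the weak $QC^\infty$ hypothesis. Collecting the binomial coefficients from the Duhamel expansion together with the $\Gamma$-prefactors introduced by the $s$- and $\lambda$-tricks, and comparing with $Ch_m(u) = (-1)^{(m-1)/2}((m-1)/2)!\,u^*\otimes u \otimes\cdots\otimes u$, the contribution of these terms assembles precisely into $\tfrac{1}{\sqrt{2\pi i}}\sum_{m\ \mathrm{odd}}^{2N-1}\Phi^r_m(Ch_m(u))$.

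The remainder of the Duhamel expansion, truncated at order $2N-1$, is handled by Proposition \ref{cocycle}: up to terms already seen to be holomorphic at $r = (1-p)/2$, it is proportional to $b\Phi^r_{2N-1}(u^*, u, \ldots, u)$, which Proposition \ref{cocycle} guarantees is holomorphic there, hence contributes $0$ to the residue. Taking $\mathrm{Res}_{r = (1-p)/2}$ then yields the formula. The main obstacle is bookkeeping in the second step: rearranging the Duhamel terms into the cyclic form $\langle \cdot, \cdot, \ldots \rangle_{m,s,r}$ while keeping trace-norm convergence at each stage, and correctly matching the numerical constants from $\eta_r$, $Ch_m(u)$, Definition \ref{expectation}, and the $s$- and $\lambda$-tricks to produce the clean normalisation $1/\sqrt{2\pi i}$. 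The actual constants are identical to those in \cite{CPRS2}; what requires attention here is that every trace-class estimate used in that verification is recast as an instance of Lemma \ref{lemma_Bs_trace}.
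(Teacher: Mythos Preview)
Your proposal is correct and follows essentially the same approach as the paper: the paper's own proof simply says ``proved as in \cite{CPRS2}, where the differences for the weak $QC^\infty$ assumption are detailed above,'' and you have correctly identified both the structure of the \cite{CPRS2} argument (Carey--Phillips spectral flow formula, Duhamel expansion of the perturbed resolvent, cyclic rearrangement, remainder controlled by $b\Phi^r_{2N-1}$) and the specific substitutions required here (Lemma~\ref{lemma_Bs_trace} and the $s$- and $\lambda$-tricks replacing the pseudodifferential estimates). One small correction: you write $\A\subset\wop^0$, but the weak $QC^\infty$ hypothesis actually gives the stronger inclusion $\A\subset\mathrm{OP}^0$, which is what is used in the proof of Proposition~\ref{cocycle} to handle $[\D^2,a_j]\in\mathrm{OP}^1$ in the remainder term.
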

\begin{proof}
This `resolvent index formula' is proved as in \cite{CPRS2}, where the differences for the weak $QC^{\infty}$
assumption are detailed above. 
\end{proof}

{\bf Remark.} In the even case we have a similar statement with $N=[(p+1)/2]$ and the sum runs over even
integers from $m=0$ to $2N$; see \cite{CPRS3} for the $QC^\infty$ case and \cite{S} for the weakly $QC^\infty$ case.

\subsection{The resolvent index formula for modular spectral triples}
\label{subsec:local-index}

Let $(\A,\H,\D)$ be a modular spectral triple relative to $(\cN,\phi)$ with modular group $\sigma^\phi$, 
of spectral dimension $p\geq 1$, and weakly $QC^\infty$ so that
$$
\A\subset {\rm OP}^0,\qquad [\D,\A]\subset \wop^0.
$$
Let $u\in M_n(\A)$ be unitary, $V:\T\to M_n(\C)$ a representation
and suppose that $u$ is $\sigma^\phi\otimes Ad\,V$ invariant.

Lemma \ref{lem:bob-the-builder} constructs a semifinite spectral triple from $(\A,\H,\D)$ and $u$.
The semifinite resolvent index formula, Theorem \ref{thm:weak-index}, then shows that the
resolvent cocycle  defined using the trace $\phi\otimes G$ is `almost' a $b,\,B$ cocycle, and computes the spectral flow 
from $\D$ to $u\D u^*$.

With $N=[p/2]+1$, we have
\begin{equation}
sf_{\phi\otimes G}(\D\ox {\rm Id}_n, 
u(\D\ox {\rm Id}_n) u^*)
=\frac{1}{\sqrt{2\pi i}}\res_{r=(1-p)/2}
\sum_{m=1,odd}^{2N-1}(\Phi_G)^r_m(Ch_m(u)),
\label{eq:sf}
\end{equation}
where $(\Phi_G)^r_m$ is the resolvent cocycle defined using the trace $\phi\otimes G$.
In particular the sum on the right hand side of \eqref{eq:sf} analytically continues 
to a deleted neighbourhood of $r=(1-p)/2$ with {\em at worst} a simple pole 
at $r=(1-p)/2$.

We will compute the $G$ part of the trace, leaving us with a functional defined in terms of
$\phi$.

The Chern character of $u$ is defined to be the (infinite) sum 
$\oplus_j Ch_{2j+1}(u)\in HE_{2j+1}(M_N(\A))$, the entire cyclic homology, with
$$
Ch_{2j+1}(u)=(-1)^j\,j!\, u^*\otimes u\otimes\cdots\otimes u^*\otimes  u\qquad (2j+2)\ \mbox{entries}.
$$
Now in \cite[Lemma 4.1]{W}, Wagner has shown, in a slightly different context, that the map 
$$
G_*:\oplus_jHE_{2j+1}(M_N(\A)^{\s\otimes Ad V})\to \oplus_jHE_{2j+1}^\sigma(\A)
$$
to $\s$-twisted cyclic homology given on chains by
$$
G_*(T_0\otimes\cdots\otimes T_{2j+1})
=\sum_{i_0,i_1,\dots,i_{2j+2}}(V_{-i})_{i_{2j+2},i_0}(T_0)_{i_0,i_1}\otimes (T_1)_{i_1,i_2}\otimes\cdots
\otimes (T_{2j+1})_{i_{2j+1},i_{2j+2}}
$$
is an isomorphism. Now each equivariant unitary with class $[u]\in K_1^\T(A)$ is equivariant for
its own representation of the circle. So it makes sense to regard the representation $V$
as part of the data, so $[u]=[u,V]$. We define $Ch_{2j+1}([u,V])\in HE_{2j+1}^\sigma(\A)$ by
$$
Ch_{2j+1}([u,V])=(-1)^j\,j!\,\sum_I (V_{-i})_{i_{2j+2},i_0}(u^*)_{i_0,i_1}\otimes (u)_{i_1,i_2}\otimes\cdots
\otimes (u_{2j+1})_{i_{2j+1},i_{2j+2}}.
$$
Then it is straightforward to check that this does indeed define an entire twisted cyclic cycle.
Moreover it is immediate from the definitions that
\begin{align*}
sf_{\phi\otimes G}(\D\ox {\rm Id}_n,u(\D\ox {\rm Id}_n) u^*)
&=\frac{1}{\sqrt{2\pi i}}\res_{r=(1-p)/2}
\sum_{m=1,odd}^{2N-1}(\Phi_G)^r_m(Ch_m(u))\\
&=\frac{1}{\sqrt{2\pi i}}\res_{r=(1-p)/2}
\sum_{m=1,odd}^{2N-1}\Phi^r_m(Ch_m([u,V]))
\end{align*}
Here $\Phi^r_m$ is the resolvent cocycle given by the modular spectral triple.

We now collect the results proved above into a statement describing the resolvent index formula for weakly smooth modular spectral triples.

\begin{thm}
\label{thm_lif_odd}
For a weakly $QC^\infty$ odd 
modular spectral triple $(\A,\H,\D)$ relative to $(\cn,\phi)$ of spectral dimension $p\geq 1$,
and with $N=[p/2]+1$, the  function valued cochain 
$(\Phi^r_m)_{m=1,\dots,2N-1}$ is a twisted cyclic cocycle modulo
cochains with values in functions holomorphic in a half-plane containing $(1-p)/2$.
Moreover, for $[u,V]\in K_1^\T(\A)$ with representative $u\in M_n(\A)$ we have
\enveqn{
sf_{\phi\otimes G}(\D\ox {\rm Id}_n,u^*(\D\ox {\rm Id}_n) u) = \frac{1}{\sqrt{2\pi i}} \mathrm{Res}_{r = (1-p)/2} \left( \sum_{m=1,odd}^{2N-1}\Phi^r_m(Ch_m([u,V])) \right).
}
In particular, there is a well-defined map
$$
K_1^\T(\A)\mapsto \R,\qquad [u,V]\mapsto sf_{\phi\otimes G}(\D\ox {\rm Id}_n,u^*(\D\ox {\rm Id}_n) u).
$$
\end{thm}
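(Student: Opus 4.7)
The plan is to assemble the three assertions by stitching together results already established in the paper, with essentially no new analytic content required.

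\textbf{First assertion (cocycle property).} This is a direct restatement of Proposition \ref{cocycle}: the computation showing $(B^\sigma\Phi^r_{m+2}+b^\sigma\Phi^r_m)=0$ for $m=1,\dots,2N-3$ and that $b^\sigma\Phi^r_{2N-1}$ is holomorphic on a half-plane containing $(1-p)/2$ already delivers exactly this claim. Nothing further is needed.

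\textbf{Second assertion (resolvent index formula).} I would proceed in two steps. Step (a): apply the semifinite resolvent index formula of Theorem \ref{thm:weak-index} to the auxiliary semifinite spectral triple $(C^\infty(u),\H\otimes\C^n,\D\otimes{\rm Id}_n,\cm_n,\phi\otimes G)$ produced by Lemma \ref{lem:bob-the-builder}. This requires noting that the weak $QC^\infty$ property of the original triple is inherited by the matrix-amplified triple, so the hypotheses of Theorem \ref{thm:weak-index} are satisfied on the nose. The output is
$$
sf_{\phi\otimes G}(\D\ox {\rm Id}_n,u^*(\D\ox {\rm Id}_n) u) = \frac{1}{\sqrt{2\pi i}} \mathrm{Res}_{r = (1-p)/2} \sum_{m=1,odd}^{2N-1}(\Phi_G)^r_m(Ch_m(u)).
$$
Step (b): evaluate the $G$-part of the trace using Wagner's isomorphism $G_*\colon \oplus_j HE_{2j+1}(M_n(\A)^{\sigma\otimes {\rm Ad}\,V})\to \oplus_j HE^\sigma_{2j+1}(\A)$. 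Because $(\Phi_G)^r_m$ is constructed via the product trace $\phi\otimes G$ and $\Phi^r_m$ via $\phi$ alone, a direct chain-level computation with matrix indices gives the identity $(\Phi_G)^r_m(Ch_m(u))=\Phi^r_m(G_*(Ch_m(u)))=\Phi^r_m(Ch_m([u,V]))$. Substituting yields the stated formula. This chain-level manipulation is the only genuinely new bookkeeping required.

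\textbf{Third assertion (well-definedness on $K_1^{\mathbb T}(\A)$).} Once the formula is written purely in terms of $\Phi^r_m$ (which depends only on $(\A,\H,\D,\cn,\phi)$) paired against $Ch_m([u,V])$ (which is a twisted entire cyclic cycle depending only on the equivariant $K$-theory class $[u,V]$), invariance under the choice of representative is automatic: by the first assertion the $\Phi^r_m$ descend to a functional on twisted cyclic homology after taking residues, so homologous twisted Chern representatives give the same residue. Hence the map $[u,V]\mapsto sf_{\phi\otimes G}(\D\ox\Id_n,u^*(\D\ox\Id_n)u)$ is well-defined.

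\textbf{Expected main obstacle.} The only nontrivial step is Step (b) above: verifying that Wagner's isomorphism $G_*$ intertwines the semifinite resolvent cocycle $(\Phi_G)^r_m$ with the modular resolvent cocycle $\Phi^r_m$. The potential snag is tracking how the weight $V_{-i}$ appearing in $G(T)=\Tr(V_{-i}T)$ interacts with the $\sigma^\phi\otimes{\rm Ad}\,V$-invariance of $u$ and the cyclicity properties of $\phi$ used implicitly in the construction of $\Phi^r_m$. Everything else in the theorem is essentially bookkeeping on top of Proposition \ref{cocycle}, Theorem \ref{thm:weak-index}, and Lemma \ref{lem:bob-the-builder}.
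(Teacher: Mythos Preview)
Your proposal is correct and follows essentially the same route as the paper. The paper presents the argument as a discussion in Section~\ref{subsec:local-index} immediately preceding the theorem: it invokes Lemma~\ref{lem:bob-the-builder} and Theorem~\ref{thm:weak-index} to get the spectral flow formula in terms of $(\Phi_G)^r_m$, then uses Wagner's map $G_*$ to pass from $(\Phi_G)^r_m(Ch_m(u))$ to $\Phi^r_m(Ch_m([u,V]))$, declaring this last step ``immediate from the definitions'' rather than treating it as the main obstacle you flag --- so your Step~(b) bookkeeping is even lighter than you anticipate.
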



Though we have not proved it here, a similar result is true in the even case, see \cite{S}.

\begin{thm}
\label{thm_lif_even}
For a weakly $QC^\infty$ even modular spectral triple $(\A,\H,\D,\gamma)$ relative to $(\cn,\phi)$
of spectral dimension $p\geq 1$,
and with $M=[(p+1)/2]$, the  function valued cochain 
$(\Phi^r_m)_{m=0,\dots,2M}$ is a twisted cyclic cocycle modulo
cochains with values in functions holomorphic in a half-plane containing $(1-p)/2$.
Moreover, for $[P,V]\in K_0^\T(\A)$ with representative $P\in M_n(\A)$ and $\D_+=\frac{1}{4}(1-\gamma)\D(1+\gamma)$ we have
\enveqn{
{\rm Index}_{\phi\ox G}(P(\D_+\ox{\rm Id}_n)P)= \frac{1}{\sqrt{2\pi i}} \mathrm{Res}_{r = (1-p)/2} \left( \sum_{m=0,even}^{2M}\Phi^r_m(Ch_m([P,V])) \right).
}
In particular, there is a well-defined map
$$
K_0^\T(\A)\mapsto \R,\qquad [P,V]\mapsto {\rm Index}_{\phi\ox G}(P(\D_+\ox{\rm Id}_n)P).
$$

\end{thm}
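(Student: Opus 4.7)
The plan is to mirror exactly the argument for the odd case (Theorem \ref{thm_lif_odd}), building the even story on top of an even semifinite resolvent index formula (the result referenced as \cite{S}) combined with the $G$-pushforward trick.

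First I would construct, as in Lemma \ref{lem:bob-the-builder}, a semifinite even spectral triple from the data $(\A,\H,\D,\gamma)$ and an equivariant projection $[P,V]\in K_0^\T(\A)$ with $P\in M_n(\A)$. Since $P$ is $\sigma^\phi\otimes \Ad V$ invariant, $P\in \cm_n = (M_n(\cn))^{\sigma^\phi\otimes \Ad V}$, and $\phi\otimes G$ restricts to a faithful normal semifinite trace on $\cm_n$. The triple $(C^\infty(P,P^\perp),\H\otimes\C^n,\D\otimes \Id_n,\gamma\otimes \Id_n,\cm_n,\phi\otimes G)$ is then a weakly $QC^\infty$ even semifinite spectral triple of the same dimension $p$, with $\D_+\otimes \Id_n$ having the expected Fredholm behaviour because $P\in \cm_n$.

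Second, I would apply the weakly $QC^\infty$ even semifinite resolvent index formula from \cite{S} to this auxiliary triple. That result states that with $M=[(p+1)/2]$, the resolvent cocycle $((\Phi_G)^r_m)_{m=0,even}^{2M}$ built from $\phi\otimes G$ is a twisted $(b,B)$-cocycle modulo functions holomorphic in a half-plane containing $(1-p)/2$, and computes the semifinite index
\begin{equation*}
\mathrm{Index}_{\phi\otimes G}\bigl(P(\D_+\otimes\Id_n)P\bigr) = \frac{1}{\sqrt{2\pi i}} \mathrm{Res}_{r=(1-p)/2}\sum_{m=0,even}^{2M}(\Phi_G)^r_m(Ch_m(P)),
\end{equation*}
where $Ch_m(P)$ is the ordinary even Chern character of the projection. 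The cocycle and holomorphic-remainder properties follow exactly as in Propositions \ref{hii} and \ref{cocycle}, using the weakly $QC^\infty$ version of the $s$- and $\lambda$-tricks and the even analogues of the coboundary computations; this is where the weak smoothness hypothesis must be propagated through the even formalism of \cite{CPRS3}.

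Third, I would push the computation down from $M_n(\A)^{\sigma^\phi\otimes \Ad V}$ to $\A$ using the twisted version of the map $G_\ast$ appearing before Theorem \ref{thm_lif_odd}. Define the twisted even Chern character $Ch_m([P,V])\in HE^\sigma_m(\A)$ by inserting the matrix coefficients of $V_{-i}$ in the initial slot, exactly as was done for the odd unitary case, and check directly that this defines an entire twisted cyclic cycle. By construction $(\Phi_G)^r_m(Ch_m(P)) = \Phi^r_m(Ch_m([P,V]))$, since the only difference between the two sides is that the trace $G$ has been absorbed into the chain via $V_{-i}$, leaving the functional $\Phi^r_m$ built from $\phi$ alone. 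Taking residues at $r=(1-p)/2$ then yields the formula claimed in the theorem, and the existence of the map $K_0^\T(\A)\to\R$ follows because the right hand side depends only on the twisted cyclic homology class of the Chern character, which in turn is an invariant of $[P,V]$.

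The main obstacle is the first step of the second paragraph: establishing the weakly $QC^\infty$ even semifinite resolvent index formula in enough generality to apply it here. The even coboundary identity produces an extra $\gamma$-graded term at $m=0$ and a different range of indices, so one has to verify that the proofs of Lemma \ref{lemma_Bs_trace}, Proposition \ref{hii} and Proposition \ref{cocycle} go through unchanged when the summation runs over even $m$ and the parity operator appears. Since $\gamma\in\cm$ and commutes with $\A$ and anticommutes with $\D$, insertion of $\gamma$ does not affect any of the trace-norm estimates or the pseudodifferential-free $s$- and $\lambda$-tricks, so this is routine but lengthy; the proof is carried out in detail in \cite{S}.
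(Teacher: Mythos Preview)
Your proposal is correct and matches the paper's approach exactly. In fact, the paper does not give a proof of this theorem at all: it simply states ``Though we have not proved it here, a similar result is true in the even case, see \cite{S}'', so your outline---mirror the odd argument of Theorem~\ref{thm_lif_odd} by building the auxiliary even semifinite triple, invoke the even weakly $QC^\infty$ resolvent index formula from \cite{S}, and then absorb $G$ into the chain via the $G_*$ pushforward to pass from $(\Phi_G)^r_m(Ch_m(P))$ to $\Phi^r_m(Ch_m([P,V]))$---is precisely the intended argument.
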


{\bf Remark.} The Chern character of an equivariant projection is
\begin{equation}
Ch_0([P,V])={\rm Tr}(V_{-i}P),\quad Ch_{2k}([P,V])=(-1)^k\frac{(2k)!}{k!}\sum (V_{-i}(P-1/2))_{i_0i_1}\otimes
P_{i_1i_2}\otimes\cdots\otimes P_{i_{2k}i_0}.
\label{eq:even-chern}
\end{equation}

Finally, the next two results relate the even index given by the resolvent index formula above 
back to the $K$-theory valued index pairing between the $KK$-class defined by the modular spectral triple and equivariant $K$-theory.

\begin{lemma}
Let $(\A,\H,\D)$ be a modular spectral triple  relative to $(\cn,\phi)$. Let $J_\phi\subset \cN$ be the ideal from Definition \ref{defn_j_phi}, and $J_\phi^\sim$
its unitisation.
Let $E \in M_{k}(J_{\phi}^\sim)$ be a $\sigma^\phi \ox Ad \, W$-invariant projection, for the associated 
representation $W \colon \bT \to M_k(\bC)$, so that $[E, W] \in K_{0}^{\bT}(J_{\phi}^\sim)$. Define 
\enveqn{
\phi_{\ast}([E, W]) := (\phi \ox G_{W})(E) \in [0, \infty],
}
where $G_{W}(T) = \Tr(W_{-i} T)$, for $T \in M_{k}(\bC)$. Then $\phi_{\ast}$ is a well-defined map on the semigroup of Murray-von Neumann
equivalence classes of equivariant projections in $J_\phi^\sim\otimes \K$, where $\K$ is the compact operators. The Grothendieck group of the sub-semigroup 
for which $\phi_*$ takes finite values is (isomorphic to) a subgroup of
$K_{0}^{\bT}(J_{\phi})$, and we call this the domain of $\phi_*$.
\end{lemma}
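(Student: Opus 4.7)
The plan proceeds in three stages: well-definedness on equivariant Murray--von Neumann classes, additivity and the Grothendieck construction, and identification with a subgroup of $K_0^\T(J_\phi)$.

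Well-definedness rests on the twisted tracial property of $\phi \otimes G_W$. If $(E,W)$ and $(F,W')$ are equivariantly Murray--von Neumann equivalent, there is a partial isometry $v$ with $v^*v = E$, $vv^* = F$ intertwining $\sigma^\phi \otimes \Ad W$ and $\sigma^\phi \otimes \Ad W'$. I embed $v$ as an off-diagonal block of $M_{k+k'}(\cN)$ and consider the KMS weight $\phi \otimes G_{W \oplus W'}$ for the combined periodic modular action $\sigma^\phi \otimes \Ad(W \oplus W')$. By the argument already used to construct the trace $\phi \otimes G$ on $\cm_n$ in Subsection 2.2 (via \cite[Proposition 4.3]{T}), its restriction to the centralizer is a faithful normal semifinite trace, and the intertwining property places $v$ in that centralizer. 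The tracial identity $(\phi \otimes G_{W \oplus W'})(v^*v) = (\phi \otimes G_{W \oplus W'})(vv^*)$ then yields $\phi_*([E,W]) = \phi_*([F,W'])$. Stabilisation by $\K$ with trivial $\T$-action is handled by increasing matrix sizes, and additivity on orthogonal direct sums is immediate from additivity of the weight, so the finite-$\phi_*$ classes form a sub-semigroup $S_{\phi_*}$ whose Grothendieck group $G_{\phi_*}$ is a well-defined abelian group.

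To identify $G_{\phi_*}$ with a subgroup of $K_0^\T(J_\phi)$ I use the augmentation $\pi \colon J_\phi^\sim \to \C$ and the resulting split short exact sequence
$$
0 \longrightarrow K_0^\T(J_\phi) \longrightarrow K_0^\T(J_\phi^\sim) \xrightarrow{\pi_*} K_0^\T(\C) \longrightarrow 0.
$$
Given $(E,W)$ with $\phi_*[E,W] < \infty$, I claim $\pi_*[E,W] = 0$. Diagonalising $W$ with eigenvalues $\lambda_1,\dots,\lambda_k \in [1,\infty)$, equivariance forces each diagonal entry $E_{ii} \in \cM$. Writing $E_{ii} = a_{ii} + \mu_i\, 1$ with $a_{ii} \in J_\phi \cap \cM$ and $\mu_i \in \C$, the finiteness condition
$$
(\phi \otimes G_W)(E) = \sum_i \lambda_i \bigl(\phi(a_{ii}) + \mu_i\, \phi(1)\bigr) < \infty
$$
forces $\mu_i = 0$ for every $i$ in the standard case $\phi(1) = \infty$. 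Since $\pi(E)$ is itself a projection in $M_k(\C)$, vanishing of its diagonal forces $\pi(E) = 0$, so $[E,W]$ lifts uniquely to $K_0^\T(J_\phi)$. The resulting map $G_{\phi_*} \to K_0^\T(J_\phi)$ is well-defined, and injectivity follows because equality in $G_{\phi_*}$ is, by definition, stabilised equivariant equivalence through a finite-$\phi_*$ projection, which is exactly the defining relation for equality in $K_0^\T(J_\phi)$.

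The main obstacle is the scalar-part argument in the last paragraph: showing that $\phi_*$-finiteness forces the augmentation $\pi(E)$ to vanish. This uses the semifiniteness (rather than finiteness) of $\phi$ on $\cN$ in an essential way; in the boundary case $\phi(1) < \infty$ the ideal $J_\phi$ typically collapses to $\cN$ and the distinction between $J_\phi$ and $J_\phi^\sim$ disappears in $K$-theory, making the statement tautological. Everything else---KMS-trace property on the centralizer, Grothendieck construction, and the augmentation sequence---is standard bookkeeping.
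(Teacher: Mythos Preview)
Your well-definedness argument is correct and is the same mechanism the paper uses. The paper performs the computation directly,
\[
\phi\big(\Tr_n(W_{1,-i}S^*S)\big) \;=\; \phi\big(\Tr_m(SW_{1,-i}S^*)\big) \;=\; \phi\big(\Tr_m(W_{2,-i}SS^*)\big),
\]
where the first step cycles $S$ (this is where the KMS/trace property enters) and the second uses the analytically continued intertwining $SW_{1,-i}=W_{2,-i}S$. Your embedding into $M_{k+k'}(\cN)$ and appeal to the centralizer trace for $\phi\otimes G_{W\oplus W'}$ is simply the abstract repackaging of the same identity.

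For the Grothendieck-group half you do considerably more than the paper, whose entire argument is one sentence invoking the universal property. Two remarks on your extra work. First, the augmentation step is morally right but the displayed line $(\phi\otimes G_W)(E)=\sum_i\lambda_i(\phi(a_{ii})+\mu_i\phi(1))$ is formal: $a_{ii}\in J_\phi$ need not lie in the linear domain of the weight $\phi$, so you cannot split the sum that way. One should instead argue directly that $E_{ii}\geq 0$ together with $E_{ii}-\mu_i\,1\in J_\phi$ and $\mu_i>0$ forces $\phi(E_{ii})=\infty$, using a genuine compactness property of $J_\phi$ rather than formal linearity. Second, the injectivity claim is not correct as stated: equality in $G_{\phi_*}$ allows stabilisation only by finite-$\phi_*$ projections, whereas equality in $K_0^\T(J_\phi)$ allows arbitrary equivariant stabilisers, so the two equivalence relations are \emph{not} ``exactly the defining relation'' for one another, and the natural map $G_{\phi_*}\to K_0^\T(J_\phi)$ can in principle have a kernel. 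The paper does not address this either; in the subsequent index theorem the only thing actually used is that $\phi_*$ is well-defined and finite on the explicit kernel-projection classes, so the imprecision is harmless in context.
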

\begin{proof}
Let $W_{1} \colon \bT \to M_{n}(\bC)$ and $W_{2} \colon \bT \to M_{m}(\bC)$ be representations. Let $E_{1} \in M_{n}(J_{\phi})$ 
denote a $\sigma \ox Ad \, W_{1}$ projection, and let $E_{2} \in M_{m}(J_{\phi})$ denote a $\sigma^\phi \ox Ad \, W_{2}$ projection. 
Suppose that $[E_{1}, W_{1}]$ and $[E_{2}, W_{2}]$ are equivariantly Murray-von Neumann equivalent (\cite[Definition 3.1]{W}), 
meaning there exists some $S \in M_{m \times n}(J_{\phi})$ such that 
\enveqn{
S^{\ast} S = E_{1}, \qquad S S^{\ast} = E_{2}, \qquad \text{and} \qquad W_{2, z} S = S W_{1, z} \quad \text{for all} \ \ z \in \bT.
}
Then we compute
\envaln{
\phi_{\ast}([E_{1}, W_{1}]) &= (\phi \ox G_{W_{1}})(E_{1}) = \phi \left( \Tr_{n}(W_{1, -i} E_{1}) \right) = \phi \left( \Tr_{n}(W_{1, -i} S^{\ast} S) \right) \\
&= \phi \left( \Tr_{n}(S W_{1, -i} S^{\ast}) \right).
}
Now, by analytically continuing, $S W_{1, -i} = W_{2, -i} S$, so
\envaln{
\phi_{\ast}([E_{1}, W_{1}]) &= \phi \left( \Tr_{m}(W_{2, -i} S S^{\ast}) \right) = \phi \left( \Tr_{m}(W_{2, -i} E_{2}) \right) = \phi_{\ast}([E_{2}, W_{2}]).
}
Using the universal property of the Grothendieck group, we see that the Grothendieck group of equivalence classes
for which $\phi_*$ takes finite values may be regarded as a subgroup of $K_{0}^{\bT}(J_{\phi})$. On this subgroup, $\phi_*$ is
well-defined.
\end{proof}

\begin{thm}
\label{thm_ch3_kk_index_resolvent}
Let $(\A,\H,\D,\gamma)$ be a weakly $QC^\infty$ even modular spectral triple  relative to $(\cn,\phi)$
of spectral dimension $p\geq 1$, and $[P,V]\in K_0^\bT(A)$. Let $B_\phi\subset J_\phi$ be as in Definition \ref{def:bee-phi}, and let $i:B_\phi\to J_\phi$
be the inclusion.
Then
$i_*([P, V] \ox_{A} [(B_\phi,F_{\cD})]) \in K_{0}^{\bT}(J_{\phi})$ is in the domain of $\phi_{\ast}$. Furthermore,
\enveqn{
\phi_{\ast}(i_*([P, V] \ox_{A} [(B_\phi,F_{\cD})] )) = \res_{r = (1-p)/2} \left( \sum_{m=0,even}^{2N}\Phi^{r}_{m}(Ch_{m}([P, V])) \right).
}
\end{thm}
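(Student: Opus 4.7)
The plan is to reduce the statement to the even resolvent index formula, Theorem \ref{thm_lif_even}, and then separately identify the left-hand side $\phi_*(i_*([P,V]\otimes_A[(B_\phi,F_\D)]))$ with the analytic Fredholm index that appears on the right of that formula. Concretely, given the equivariant representative $P\in M_n(\A)$ of $[P,V]\in K_0^\T(A)$, Lemma \ref{lem:bob-the-builder} (in its even variant) produces the even semifinite spectral triple $(C^\infty(P),\H\otimes\C^n,\D\otimes\Id_n,\cm_n,\phi\otimes G)$, which inherits the weak $QC^\infty$ property from the modular spectral triple by the remark following Definition \ref{defn:weak-op}. Applying Theorem \ref{thm_lif_even} to this semifinite triple, and then using Wagner's isomorphism $G_*$ (as in the derivation of \eqref{eq:sf}) to rewrite the resolvent cocycle of the amplified semifinite data in terms of $\Phi^r_m$ paired with the twisted Chern character $Ch_m([P,V])$, immediately yields
$$\mathrm{Index}_{\phi\otimes G}(P(\D_+\otimes\Id_n)P)=\frac{1}{\sqrt{2\pi i}}\res_{r=(1-p)/2}\sum_{m=0,\mathrm{even}}^{2N}\Phi^r_m(Ch_m([P,V])).$$

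The remaining task is to identify $\phi_*(i_*([P,V]\otimes_A[(B_\phi,F_\D)]))$ with this Fredholm index. The Kasparov product $[P,V]\otimes_A[(B_\phi,F_\D)]\in K_0^\T(B_\phi)$ is represented, using the explicit form of the Kasparov module $(B_\phi,F_\D)$ from Proposition \ref{prop:Kas-mod}, by the formal difference of the kernel and cokernel projections of the compression $P(F_{\D,+}\otimes\Id_n)P$. The careful choice of the $C^*$-algebra $B_\phi$ in Definition \ref{def:bee-phi} ensures that these kernel/cokernel projections (modulo those in the unitisation) lie in matrices over $B_\phi$. Pushing forward by $i$ lands in $M_n(J_\phi^\sim)$, and the definition of $\phi_*$ as $(\phi\otimes G_W)$ applied to the representing equivariant projections coincides, term by term, with the Breuer--Fredholm index of $P(\D_+\otimes\Id_n)P$ with respect to the trace $\phi\otimes G$ on $\cm_n$. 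That these projections have finite $(\phi\otimes G)$-trace, and hence that $i_*([P,V]\otimes_A[(B_\phi,F_\D)])$ is in the domain of $\phi_*$, follows from the finite summability of the modular spectral triple together with the fact that $P\in M_n(\A)\subset M_n(\cm_n)$ conjugates finite trace projections to finite trace projections. This step is the even analogue of \cite[Theorem 6.9]{KNR}, invoked for the odd case in \eqref{eqn_sf}, and combining it with the first displayed equation concludes the proof.

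The main obstacle is the second step: verifying that the abstract Kasparov product in $K_0^\T(B_\phi)$, computed within a merely $\sigma$-unital (and not $\cm_n$-generated) ideal, admits a representative equal to the naive analytic kernel--cokernel difference of $P(\D_+\otimes\Id_n)P$, and that this identification survives the pushforward by $i$ followed by the trace $\phi\otimes G$. The subtlety lies in confirming that the finite-trace projections truly lie in $B_\phi$, which uses the resolvent-type generators $a\,\varphi(\D)$ and $F_\D[F_\D,a]$ built into Definition \ref{def:bee-phi}; everything else in the argument is a bookkeeping exercise combining Theorem \ref{thm_lif_even} with the semifinite reduction of Lemma \ref{lem:bob-the-builder}.
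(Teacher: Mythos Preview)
Your outline follows the same route as the paper: reduce to Theorem \ref{thm_lif_even} via the semifinite spectral triple of Lemma \ref{lem:bob-the-builder}, then identify the Kasparov product with the kernel/cokernel index. You also correctly flag the real issue, namely showing that the kernel projections $N_\pm=\ker(P(\D\ox\Id_n)^\pm P)$ actually lie in $M_n(B_\phi)$ and have finite $(\phi\ox G)$-trace.

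The gap is that you acknowledge this obstacle without resolving it; the vague appeal to ``the resolvent-type generators built into Definition \ref{def:bee-phi}'' is not a proof. The paper's argument is concrete and not entirely obvious: one first uses $N_\pm\le(P+(P\D_nP)^2)^{-1}$, and then must show $(P+(P\D_nP)^2)^{-1}\in PM_n(B_\phi)P$. This requires two algebraic reductions. First, the identity $P[\D_n,P]P=0$ gives $(P+(P\D_nP)^2)^{-1}=(P+P\D_n^2P)^{-1}B(P)$ for an explicit bounded $B(P)$. Second, a parametrix computation
\[
(P+P\D_n^2P)\,P(1+\D_n^2)^{-1}P=P+P[\D_n^2,P](1+\D_n^2)^{-1}P
\]
shows $(P+P\D_n^2P)^{-1}\in PM_n(B_\phi)P$, since $P(1+\D_n^2)^{-1}P\in M_n(B_\phi)$ by the choice of generators and $[\D_n^2,P](1+\D_n^2)^{-1/2}$ is bounded because $P\in{\rm OP}^0$ (this is where weak $QC^\infty$ enters). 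Your claim that finite trace follows because ``$P$ conjugates finite trace projections to finite trace projections'' is not the mechanism; it is the inequality $N_\pm\le(P+(P\D_nP)^2)^{-1}$ combined with the compactness just established.

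A minor correction: you phrase the product in terms of kernels of $P(F_{\D,+}\ox\Id_n)P$, but the argument must run through $N_\pm=\ker(P(\D\ox\Id_n)^\pm P)$, since it is the unbounded operator that links directly to the resolvent $(P+(P\D_nP)^2)^{-1}$ needed for the $B_\phi$ membership and the trace estimate.
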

\begin{proof}
Given the modular spectral triple $(\cA,\cH,\cD,\gamma,\cN,\phi)$, 
we define  $[(B_\phi,F_{\cD})] \in KK^{0, \bT}(A, B_{\phi})$. Also, let $V \colon \bT \to M_n(\bC)$ be a representation and 
$P \in M_n(\cA)$ a projection which is $\sigma^\phi \otimes Ad \,V$ invariant, so that we obtain a class $[P, V] \in K_{0}^{\bT}(A)$.

Define the projections
\enveqn{
N_{\pm} := \ker(P (\cD \ox \mathrm{Id}_{n})^{\pm} P),
}
so that
\enveqn{
\Ind_{\phi \ox G}(P (\cD \ox \mathrm{Id}_{n})^{+} P) = (\phi \ox G)(N_{+}) - (\phi \ox G)(N_{-}).
}
By the construction of the semifinite spectral triple $(C^\infty(P), \cH \ox \bC^{n}, \cD \ox \mathrm{Id}_{n}, \cM_{n}, \phi \ox G)$, we have 
$N_{\pm} \in \cK((M_{n}(\cN))^{\sigma^\phi \ox Ad \, V}, \phi \ox G)$,
since the $N_{\pm}$ are kernel projections, and
\enveq{
\label{eqn_ch3_N_leq_pdp}
N_{\pm} \leq (P + (P (\cD \ox \mathrm{Id}_{n}) P)^{2})^{-1}.
}
Also, the $\sigma^\phi \ox Ad \, V$-invariance of $P$ implies the same invariance for $N_{\pm}$. 

We now want to show that we also have
$N_{\pm} \in M_{n}(B_{\phi})$, so that they define classes in $K_{0}^{\bT}(B_{\phi})$. We do this by proving that the 
operator $(P + (P (\cD \ox \mathrm{Id}_{n}) P)^{2})^{-1} \in M_{n}(B_{\phi})$, then applying Equation \eqref{eqn_ch3_N_leq_pdp} again
to see that $N_{\pm} \in M_{n}(B_{\phi})$.


For brevity let $\cD_{n} := \mathrm{Id}_{n} \ox \cD$. Consider the operator 
$(P + (P\cD_{n}P)^{2})^{-1} \colon P(\bC^{n} \ox B_\phi) \rightarrow P(\bC^{n} \ox B_\phi)$. 
The inverse exists because $P$ acts as the identity on $P(\bC^{n} \ox B_\phi)$, and $(P \cD_{n} P)^{2} \geq 0$. 
The adjointable endomorphisms on $P(\bC^{n} \ox B_\phi)$ are $P M_{n}(M(B_\phi)) P$, where $M(B_\phi)$ is the multiplier algebra,
while the compact operators are $P M_{n}(B_\phi) P$. 
A priori, we know only that $(P + (P\cD_{n}P)^{2})^{-1}$ is bounded on $P(\bC^{n} \ox B_\phi)$.

To show the compactness of $(P + (P\cD_{n}P)^{2})^{-1}$, we compute
\envaln{
(P + (P\cD_{n}P)^{2})^{-1} &= (P + P[\cD_{n}, P] \cD_{n} P + P\cD_{n}^{2}P)^{-1} \\
&= (P + P\cD_{n}^{2}P)^{-1} + \Big[ (P + P[\cD_{n}, P] [\cD_{n}, P] P + P\cD_{n}^{2}P)^{-1} - (P + P\cD_{n}^{2}P)^{-1} \Big],
}
where the last line follows from the observation
\enveqn{
P[\cD_{n}, P]P = P[\cD_{n}, P^{2}]P = P(P[\cD_{n}, P] + [\cD_{n}, P]P)P = 2P[\cD_{n}, P]P = 0,
}
so that $P[\cD_{n}, P] [\cD_{n}, P] P = P[\cD_{n}, P] \cD_{n} P$. Now, the algebraic result $\alpha^{-1} - \beta^{-1} = \beta^{-1} (\beta - \alpha) \alpha^{-1}$ yields
\envaln{
&(P + P[\cD_{n}, P] [\cD_{n}, P] P + P\cD_{n}^{2}P)^{-1} - (P + P\cD_{n}^{2}P)^{-1} \\
& \qquad \qquad \qquad = - (P + P\cD_{n}^{2}P)^{-1} \Big( P[\cD_{n}, P] [\cD_{n}, P] P \Big) (P + P[\cD_{n}, P] [\cD_{n}, P] P + P\cD_{n}^{2}P)^{-1}.
}
Hence
\enveqn{
(P + (P\cD_{n}P)^{2})^{-1} = (P + P\cD_{n}^{2}P)^{-1} B(P),
}
where $B(P)$ is a bounded operator, given by
\enveqn{
B(P) = 1 - (P[\cD_{n}, P] [\cD_{n}, P] P) (P + P[\cD_{n}, P] [\cD_{n}, P] P + P\cD_{n}^{2}P)^{-1}.
}

Now consider $(1 + \cD_{n})^{-1} \colon \bC^{n} \ox B_\phi \rightarrow \bC^{n} \ox B_\phi$. Then we have
\enveq{
\label{eqn_ch3_parametrix}
(P + P\cD_{n}^{2}P) \, P (1 + \cD_{n})^{-1} P = P + P [\cD_{n}^{2}, P] (1 + \cD_{n}^{2})^{-1} P=P+PC(P)(1+\D_n^2)^{-1/2}P.
}
Here $C(P)$ is bounded since $P\in M_n(\A)\subset OP^0$ (where $OP^0$ is defined using $\D_n$). 

Now $(1 + \cD^{2})^{-1/2} \in  B_\phi$ by definition, 
so $(1 + \cD_{n}^{2})^{-1/2} \in M_{n}(B_\phi)$. Hence
\enveqn{
P [\cD_{n}^{2}, P] (1 + \cD_{n}^{2})^{-1} P \in P M_{n}(B_\phi) P,
}
so Equation \eqref{eqn_ch3_parametrix} now implies that
\enveqn{
(P + P\cD_{n}^{2}P)^{-1} \in P M_{n}(B_\phi) P.
}

We know $B_\phi$ is an ideal in the endomorphisms, so Equation \eqref{eqn_ch3_N_leq_pdp} 
now implies that $N_{\pm} \in M_{n}(B_\phi)$. By the $\sigma^\phi \ox Ad \, V$-invariance of $N_{\pm}$, 
we have $[N_{\pm}, V] \in K_{0}^{\bT}(B_\phi)$. Then
\enveq{
\label{eqn_ch3_phig_phiast}
(\phi \ox G)(N_{+}) - (\phi \ox G)(N_{-}) = \phi_{\ast}\left(i_*\left( [N_{+}, V] - [N_{-}, V] \right)\right).
}

In order to compare Equation \eqref{eqn_ch3_phig_phiast} to the Kasparov product 
$[P, V] \ox_{A} [(B_\phi, F_{\cD})]$, we rewrite the classes $[N_{\pm}, V]$ as Kasparov modules. We have
\enveqn{
[N_{+}, V] - [N_{-}, V] = \left[ \left( N_{+}(\bC^{n} \ox B_\phi) \oplus N_{-}(\bC^{n} \ox B_\phi), \, 0, \, \bma N_{+} & 0 \\ 0 & -N_{-} \ema, \, V \oplus V \right) \right],
}
where $N_{+}(\bC^{n} \ox B_\phi) \oplus N_{-}(\bC^{n} \ox B_\phi)$ 
is the right Hilbert $B_\phi$-module, $0$ is the operator, $\bma N_{+} & 0 \\ 0 & -N_{-} \ema$ is the 
grading and $V \oplus V$ is the $\bT$-action giving the equivariance.

Now the operator $P (\mathrm{Id}_{n} \ox F_{\cD})^{+} P$ gives an isomorphism from 
$(1 - N_{+})(\bC^{n} \ox B_\phi)$ to $(1 - N_{-})(\bC^{n} \ox B_\phi)$. 
Hence, the Kasparov module constructed from $(1 - N_{\pm})(\bC^{n} \ox B_\phi)$ and $P (\mathrm{Id}_{n} \ox F_{\cD})^{+} P$ has trivial class. Consequently,
\envaln{
& \left[ \left( N_{+}(\bC^{n} \ox B_\phi) \oplus N_{-}(\bC^{n} \ox B_\phi), \, 0, \, \bma N_{+} & 0 \\ 0 & -N_{-} \ema, \, V \oplus V \right) \right] \\
& \qquad \qquad \qquad \qquad \qquad \qquad \qquad \qquad 
= \left[ \left( \bC^{n} \ox B_\phi, \, P (\mathrm{Id}_{n} \ox F_{\cD}) P, \, \mathrm{Id}_{n} \ox \gamma, \ V \right) \right].
}

Finally, observe that, see \cite{B} for example, we have an explicit representative of the Kasparov product
\enveqn{
[P, V] \ox_{A} [( B_\phi,F_{\cD})] = \left[ \left( \bC^{n} \ox B_\phi, \, P (\mathrm{Id}_{n} \ox F_{\cD}) P, \, \mathrm{Id}_{n} \ox \gamma, \ V \right) \right].
}


Reiterating the above results, we have proved that
\envaln{
\Ind_{\phi \ox G}(P (\cD \ox \mathrm{Id}_{n})^{+} P) &= (\phi \ox G)(N_{+}) - (\phi \ox G)(N_{-}) \\
&= \phi_{\ast}\left(i_*\left( [N_{+}, V] - [N_{-}, V] \right)\right) \\
&= \phi_{\ast}\left(i_*\left( \left[ \left( \bC^{n} \ox B_\phi, \, P (\mathrm{Id}_{n} \ox F_{\cD}) P, \, \mathrm{Id}_{n} \ox \Gamma, \ V \right) \right] \right)\right) \\
&= \phi_{\ast}\left(i_*\left( [P, V] \ox_{A} [(B_\phi,F_{\cD})] \right)\right). \qedhere
}
\end{proof}

\section{The local index formula for the Podle\'s sphere}
\label{sec:pods-chern}

In this section we will explicitly compute a twisted $b,B$ cocycle for the 
(modular) spectral triple over the Podle\'s sphere first investigated in \cite{DS}. We
do this by applying the modified pseudodifferential calculus of \cite{NT} to the twisted resolvent
cocycle of the previous section. Having done this, we construct some equivariant projections
for a circle action arising from the Haar state and compute the index pairing via a residue formula, yielding a local index formula. 

\subsection{The modular spectral triple for the Podle\'s sphere}
\label{subsec:pods-mod}

We first recall (see \cite{KS}) that the quantum algebra $\cA = \cO(SU_{q}(2))$, for $q \in [0, 1]$, is 
generated by elements $a,\, b,\, c,\, d$ modulo the relations

\begin{align*}
ab = qba, \ \ \ ac = qca, \ \ \ bd &= qdb, \ \ \ cd = qdc, \ \ \ bc = cb \\
ad = 1 + qbc, \ & \ \ da = 1 + q^{-1}bc \\
a^{\ast} = d, \ \ \ b^{\ast} = -qc, \ &\ \ c^{\ast} = -q^{-1}b, \ \ \ d^{\ast} = a.
\end{align*}

The Podle\'s sphere, which we denote by $\cB$, is (isomorphic to)
the unital $\ast$-subalgebra of $\cO(SU_{q}(2))$ 
generated by $q^{-1}ab$, $-cd$ and $-q^{-1}bc$.

%


Recall that for each $l \in \half \bN_0$, there is a 
unique (up to unitary equivalence) irreducible corepresentation $V_l$ of 
the coalgebra $\A$ of dimension $2l+1$, and that $\A$ is
cosemisimple. That is, if we fix a vector space basis in
each of the $V_l$ and denote by $t^l_{r,s} \in \A$ the corresponding
matrix coefficients, then we have the following analogue of the
Peter-Weyl theorem.


\begin{thm}[{\cite[Theorem 4.13]{KS}}] \label{thm_rep_basis}
Let 
$ I_{l} := \{ -l, -l+1, \ldots, l-1,l \} $.
Then the set $\{ \re{l}{r}{s} \ | \ l \in
 \half \bN_{0}, \ r, s \in I_{l} \}$ is a vector space basis of $\A$.
\end{thm}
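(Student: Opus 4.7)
The plan is to combine the two ingredients made explicit in the paragraph preceding the theorem: cosemisimplicity of $\A$, and the classification giving a unique (up to equivalence) irreducible corepresentation $V_l$ of dimension $2l+1$ for each $l \in \half\bN_0$. The argument is the standard Peter--Weyl decomposition for a cosemisimple coalgebra, and it splits into a direct sum decomposition, a spanning step, and a dimension count.

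First I would decompose
\[
\A = \bigoplus_{l \in \half\bN_0} C(V_l),
\]
where $C(V_l) \subset \A$ denotes the coefficient subcoalgebra of $V_l$. Cosemisimplicity of $\A$ gives a direct sum decomposition into simple subcoalgebras, and the classification stated in the preceding paragraph identifies these simple subcoalgebras, one for each equivalence class of irreducible corepresentations, with the $C(V_l)$. For each fixed $l$, the matrix coefficients $\re{l}{r}{s}$ span $C(V_l)$ essentially by definition: fixing the chosen basis $\{e_s\}_{s \in I_l}$ of the carrier of $V_l$ and writing the coaction in the form $e_s \mapsto \sum_r e_r \otimes \re{l}{r}{s}$ characterises the $\re{l}{r}{s}$ as a generating set of $C(V_l)$.

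The main step, and the place where the real work sits, is linear independence of the $(2l+1)^2$ elements $\{\re{l}{r}{s}\}_{r,s \in I_l}$ inside $C(V_l)$. This follows from the identification $C(V_l)^\ast \cong M_{2l+1}(\C)$: a finite-dimensional simple coalgebra is dual to a finite-dimensional simple algebra, which by Wedderburn is a matrix algebra whose rank equals the dimension of its unique irreducible module, here $2l+1$. Hence $\dim C(V_l) = (2l+1)^2$, matching the cardinality of the spanning set, so the $\re{l}{r}{s}$ form a basis of $C(V_l)$. Linear independence across different values of $l$ is then immediate from the direct sum decomposition, and combining the three steps yields the claimed basis of $\A$. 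The main obstacle, if any, is supplying the cosemisimple-coalgebra/Wedderburn duality machinery cleanly; once that is in hand the statement reduces to a dimension count.
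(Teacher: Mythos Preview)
The paper does not supply its own proof of this statement: it is quoted verbatim as \cite[Theorem 4.13]{KS} and used as a black box, so there is nothing to compare your argument against. Your sketch is the standard Peter--Weyl argument for a cosemisimple coalgebra (decompose into simple coefficient coalgebras, identify each with a matrix coalgebra via Wedderburn duality, then count dimensions), and it is correct; this is essentially how the cited reference proceeds as well.
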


This will be referred to as the Peter-Weyl basis. With a suitable choice of basis in $V_{\half}$, one has

\envaln{
a &= \re{\half}{-\half}{-\half}, & b &= \re{\half}{-\half}{\half}, & c &= \re{\half}{\half}{-\half}, 
& d &= \re{\half}{\half}{\half}.
}

The expressions for the Peter-Weyl basis elements as linear combinations of the polynomial basis elements can be found in \cite[Section 4.2.4]{KS}.

The algebra $\cA$ has a useful direct sum decomposition. For $m, \, n \in \bZ$ where $m - n$ is even, define

\enveqn{
\cA[m, n] := \mathrm{span} \{ a^{\frac{1}{2}(m+n)}b^{k + \frac{1}{2}(m-n)}c^{k}, \ b^{k + \frac{1}{2}(m-n)}c^{k}d^{-\frac{1}{2}(m+n)} \colon k + \min\{ 0, \, \tfrac{1}{2}(m-n) \} \in \bN_{0} \},
}

and for $m - n$ odd, let $\cA[m, n] := \{ 0 \}$. Then 

\enveqn{
\cA = \bigoplus_{m, n \in \bZ} \cA[m, n], \qquad \text{and} \qquad \cA[m_{1}, n_{1}] \cdot \cA[m_{2}, n_{2}] \subseteq \cA[m_{1}+m_{2}, n_{1}+n_{2}].
}

With this notation, we have $\cB = \bigoplus_{m \in \bZ} \cA[m, 0]$.


Let $h$ be the Haar state on the universal $C^{\ast}$-completion of the $\ast$-algebra $\cA$, whose value 
on the Peter-Weyl basis is $h(\re{l}{r}{s}) = \delta_{l, 0}$. Define an automorphism $\vartheta$ on $\cA$ by

\enveqn{
\vartheta(a) = q^{2} a, \quad \vartheta(b) = b, \quad \vartheta(c) = c, \quad \vartheta(d) = q^{-2} d.
}

Then $\vartheta$ is the modular automorphism for the Haar state, in the sense that 
$h(\alpha \beta) = h(\vartheta(\beta) \alpha)$ for all $\alpha, \beta \in \cA$. For all $n \in \bZ$ define

\enveqn{
\cH_{n} := L^{2} \left( {\rm span} \left\{ \re{l}{r}{\frac{n}{2}} \colon l \in \tfrac{n}{2} + \bN_{0}, r \in I_{l} \right\}, h \right).
}

The left action of the dual Hopf algebra to $\cA$ provides the unbounded operators $\p_e \colon \cH_{n} \rightarrow \cH_{n+2}$ and $\p_f \colon \cH_{n} \rightarrow \cH_{n-2}$ given by

\envaln{
\p_e(\re{l}{r}{s}) &=  \sqrt{\left[ l + \half \right]_{q}^{2} - \left[ s+ \half \right]_{q}^{2} } \,\,\re{l}{r}{s+1}, & \p_f(\re{l}{r}{s}) &= \sqrt{\left[ l + \half \right]_{q}^{2} - \left[ s- \half \right]_{q}^{2} } \,\, \re{l}{r}{s-1}
}

where our definition of  the $q$-number $[a]_{q}$ is

\enveqn{
[a]_{q} := \frac{q^{-a} - q^{a}}{q^{-1} - q}=Q(q^{-a} - q^{a}) \qquad \text{for any} \ a \in \bC,
}

and we abbreviated $Q := (q^{-1} - q)^{-1} \in (0, \infty)$. Finally, we define an unbounded linear operator 
$\Delta_{R}$ on $\cA \subset \bigoplus \cH_{n}$ by

\enveqn{
\Delta_{R}(\re{l}{r}{s}) := q^{2r} \re{l}{r}{s}.
}

\begin{defn}
Define the  Hilbert space 
$\cH := \cH_{1} \oplus \cH_{-1}$, and represent  $\cB$ on $\H$ by left multiplication. 
The Hilbert 
space $\cH$ is graded by $\gamma := \bma 1 & 0 \\ 0 & -1 \ema$. Define the weight 
$\Psi_R$ on $\B(\H)$ by $\Psi_R(T):={\rm Trace}(\Delta_R^{-1/2}T\Delta_R^{-1/2})$.
Finally, on a suitable domain in $\H$, define the self-adjoint operator $\cD := \bma 0 & \p_{e} \\ \p_{f} & 0 \ema$.  
\end{defn}


In fact $(\B,\H,\D,\gamma)$ defines an honest spectral triple, \cite{DS}, 
(i.e. a modular spectral triple with von Neumann algebra $B(\H)$ and 
weight given by the operator trace)  which is $\epsilon$-summable for all $\epsilon>0$.


%
%

\begin{lemma}
\label{lemma_podles_wop_zero}
The data $(\B,\H,\D,\B(\H),\Psi_R)$ defines a weakly $QC^\infty$ even modular spectral triple, which is finitely summable with spectral dimension 2.
\end{lemma}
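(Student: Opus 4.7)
The plan is to verify that $(\cB,\cH,\cD,\cB(\cH),\Psi_R)$ satisfies each condition of a weakly $QC^\infty$ even modular spectral triple and to compute the spectral dimension. I would begin with the weight and its modular group. Faithfulness and normality of $\Psi_R$ follow from $\Delta_R^{-1}$ being positive with dense range. Strict semifiniteness is immediate from the diagonal sum decomposition $\Psi_R(T) = \sum_{l,r,s} q^{-2r}\langle t^l_{r,s}, T\, t^l_{r,s}\rangle$, exhibiting $\Psi_R$ as a sum of rank-one normal positive functionals with mutually orthogonal supports. The modular group is $\sigma^{\Psi_R}_t(T) = \Delta_R^{-it} T \Delta_R^{it}$, and the crucial observation is that every eigenvalue of $\Delta_R$ on $\cH$ takes the form $q^{2r}$ with $2r \in \bZ$; taking $\alpha = 2\pi/|\log q|$ then yields $\Delta_R^{i\alpha} = \mathrm{Id}$, so $\sigma^{\Psi_R}$ is periodic with least period $\alpha$. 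The fixed-point algebra $\cM$ is the commutant of $\Delta_R$ in $\cB(\cH)$, and $\Psi_R|_{\cM}$ is a faithful normal semifinite trace.

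Next I verify the remaining triple axioms. The generators $q^{-1}ab, -cd, -q^{-1}bc$ of $\cB$ are $\sigma^{\Psi_R}$-eigenvectors with eigenvalues $q^{2it}, q^{-2it}, 1$ respectively (tracking the $r$-index shifts $-1, +1, 0$), giving invariance of $\cB$, analyticity of its elements, and strong continuity of the action on $\overline{\cB}$. Since $\p_e, \p_f$ only shift the $s$-index, $\cD$ commutes with $\Delta_R$ and is affiliated to $\cM$. Boundedness of $[\cD, a]$ for $a \in \cB$ is the original spectral triple property of \cite{DS}. Compactness of $(1+\cD^2)^{-1/2}$ in $\cK(\cM, \Psi_R|_{\cM})$ follows by norm approximation through the finite-rank spectral projections $\chi_{[\cD^2 \leq N]}$ (only finitely many $l$ satisfy $[l+\tfrac12]_q^2 \leq N$), which have finite $\Psi_R$-trace. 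The grading $\gamma$ is manifestly of the required form. For the spectral dimension, a direct computation
\[
\Psi_R((1+\cD^2)^{-s/2}) \;=\; 2\sum_{l \in \frac{1}{2}+\bN_0}\Big(\sum_{r=-l}^l q^{-2r}\Big)\bigl(1 + [l+\tfrac12]_q^2\bigr)^{-s/2}
\]
has summand asymptotic to $q^{(s-2)l}$ for large $l$, convergent precisely when $\Re(s) > 2$, giving $p = 2$.

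Finally, weak $QC^\infty$ requires $\cB \subset {\rm OP}^0$ and $[\cD, \cB] \subset \wop^0$. By Lemma \ref{lem:bdd-conj}, membership in $\wop^0$ is equivalent to uniform boundedness of the conjugations $(1+\cD^2)^{s/2}(\cdot)(1+\cD^2)^{-s/2}$ for all $s \in \bR$. Because the action of $\cB$-generators on the Peter--Weyl basis shifts the index $l$ by at most $\pm 1$ (from Clebsch--Gordan for $V_{1/2}^{\otimes 2} \otimes V_l$), and the ratios $\bigl(1+[l'+\tfrac12]_q^2\bigr)/\bigl(1+[l+\tfrac12]_q^2\bigr)$ remain bounded as $l \to \infty$ for $l' \in \{l-1, l, l+1\}$, such conjugations only rescale matrix elements by bounded constants. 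The stronger ${\rm OP}^0$ condition on $\cB$ demands that the off-diagonal matrix coefficients between $t^l$ and $t^{l \pm 1}$ decay sufficiently to cancel the $q^{-l}$ growth of eigenvalue differences of $(1+\cD^2)^{1/2}$; this is extracted from the explicit $q$-Clebsch--Gordan formulas in \cite{KS}, together with the normalization factor $\sqrt{[2l+1]_q/[2(l\pm 1)+1]_q} \sim q^{\mp 1}$ from the $L^2$-orthonormal Peter--Weyl basis. The principal obstacle is this ${\rm OP}^0$ claim for $\cB$; the $\wop^0$ property of $[\cD, \cB]$ is comparatively routine, requiring only bounded polynomial control on resolvent ratios rather than the exponential cancellation needed for ${\rm OP}^0$.
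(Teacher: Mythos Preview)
Your proposal is correct and takes essentially the same approach as the paper: both verify the modular spectral triple axioms via the Peter--Weyl eigenbasis and the $\wop^0$ condition on $[\cD,\cB]$ by bounding the Clebsch--Gordan eigenvalue ratios $(1+[l+\tfrac12]_q^2)/(1+[k+\tfrac12]_q^2)$, the only differences being that you compute the spectral dimension directly (the paper cites \cite{KW}) and sketch the $\cB\subset{\rm OP}^0$ step (the paper cites \cite[Proposition~3.2]{NT}). Your identification of ${\rm OP}^0$ as the principal obstacle is apt, though note that the normalization factor $\sqrt{[2l+1]_q/[2(l\pm1)+1]_q}$ you invoke contributes only a bounded constant per step rather than the required $q^l$ decay of the off-diagonal matrix elements---that decay must come from the $q$-Clebsch--Gordan coefficients themselves, which is exactly what \cite{NT} establishes and what neither your sketch nor the paper's proof supplies in a self-contained way.
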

\begin{proof}
We first show that the data produces a modular spectral triple. Certainly $\B$ is a separable $\ast$-subalgebra of 
$\B(\cH)$, and by construction the modular automorphism group of $\Psi_{R}$ is $\vartheta^{-1}_{t}$, and $\cB$ 
consists of analytic vectors for $\vartheta^{-1}_{t}$.

Also, the commutators $\left[ \cD , \beta \right]$ extend to 
bounded operators for all $\beta \in \cB$,  given by

\begin{equation}
\label{eqn_podles_com}
d\beta := \left[ \cD , \beta \right] 
= \bma 0 & q^{-\frac{1}{2}} \p_{e}(\beta) \\ q^{\frac{1}{2}} \p_{f}(\beta) & 0 \ema.
\end{equation}
We also observe that $\gamma = \gamma^{\ast}$ and $\gamma^{2} = I$, and by construction $\gamma \cD + \cD \gamma = 0$.

Now, for $T \in \B(\cH)$ set $T^{+} = (1 + \gamma) T (1 + \gamma) /4$ and $T^{-} = (1 - \gamma) T (1 - \gamma) /4$. 
From the definition of the operator trace, and using the normalised Peter-Weyl basis $\xi^{l}_{r, j} := \re{l}{r}{j} / \Vert \re{l}{r}{j} \Vert$, we find for $T \geq 0$ that
\enveqn{
\Psi_{R}(T) = \sum_{l, r} q^{-2r} \left( \la \xi^{l}_{r, 1/2}, T^{+} \xi^{l}_{r, 1/2} \ra + \la \xi^{l}_{r, -1/2}, T^{-} \xi^{l}_{r, -1/2} \ra \right).
}
We first observe from the above formula that the finite rank operators are in the domain of $\Psi_{R}$, so 
$\Psi_{R}$ is semifinite. Next, we see that $\Psi_{R}$ is a sum of vector states with orthogonal support, as the Peter-Weyl basis is orthogonal. Hence $\Psi_{R}$ is strictly semifinite.

The Peter-Weyl basis elements can be used to construct a common eigenbasis for $\cD$ and $\Delta_{R}$ on $\cH$, 
so the spectral projections of $\cD$ and $\Delta_{R}$ commute. We conclude that $\cD$ is affiliated to the fixed point 
algebra $\cM := \B(\cH)^{\vartheta^{-1}}$. All that remains to be proved is that $(1 + \cD^{2})^{-1/2} \in \cK(\cM, \Psi_{R}|_{\cM})$. 
To establish this, we observe that $\cD^{2}$ has the following spectral projections
\enveqn{
\cP_{l} \bca \re{k}{r}{1/2} \\ 0 \eca := \delta_{l, k} \bca \re{k}{r}{1/2} \\ 0 \eca, \qquad \qquad \cP_{l} \bca 0 \\ \re{k}{r}{-1/2} \eca := \delta_{l, k} \bca 0 \\ \re{k}{r}{-1/2} \eca,
}
for $l = 1/2, \, 3/2, \, \ldots$, which correspond to the eigenvalues $[l + \half]_{q}^{2}$. Now, 
$\Psi_{R}(\cP_{l}) = \sum_{r = -l}^{l} q^{-2r} = [2l+1]_{q}$, and the sum $\sum_{l = \frac{1}{2}, \frac{3}{2}, \ldots} (1 + [l + \half]_{q}^{2})^{-1/2} < \infty$ implies that
\enveqn{
(1 + \cD^{2})^{-1/2} = \sum_{l = \frac{1}{2}, \frac{3}{2}, \ldots} (1 + [l + \half]_{q}^{2})^{-1/2} \cP_{l}
}
is norm convergent. Hence $(1 + \cD^{2})^{-1/2} \in \cK(\cM, \Psi_{R}|_{\cM})$, and so $(\cB,\cH,\cD,\gamma,\B(\cH),\Psi_{R})$ is a modular spectral triple.
The spectral dimension is shown to be 2 in \cite{KW}.

We now prove that $\cB \subset \mathrm{OP}^{0}, \, [\cD, \cB] \subset \wop^{0}$, so that the modular spectral triple is 
weakly $QC^\infty$. The first statement is proved in \cite[Proposition 3.2]{NT}. To prove the second statement we show 
that for all $\beta \in \cB$ and $z \in \bC$, the operators  $(1+\cD^{2})^{-z} [\cD, \beta] (1+\cD^{2})^{z} \in \B(\cH)$, as per 
Lemma \ref{lem:bdd-conj}.  We begin by observing that $\cD^{2}$ has eigenbasis given by
\envaln{
\cD^{2} \bca t^{l}_{r, \frac{1}{2}} \\ 0 \eca &= [l + \tfrac{1}{2}]^{2} \bca t^{l}_{r, \tfrac{1}{2}} \\ 0 \eca, &
\cD^{2} \bca 0 \\ t^{l}_{r, -\frac{1}{2}} \eca &= [l + \tfrac{1}{2}]^{2} \bca 0 \\ t^{l}_{r, -\tfrac{1}{2}} \eca.
}

Now we consider $\beta \in \B$ to be of the form $t^{p}_{r,0}$ (as finite linear combinations of these elements span $\cB$). 
Then the commutator $[\cD, t^{p}_{r, 0}] = \left(\begin{array}{cc} 0 & \kappa_{1} t^{p}_{r, 1} \\ \kappa_{2} t^{p}_{r, -1} & 0 \end{array}\right)$ 
for some coefficients $\kappa_{1}, \kappa_{2}$. We expand the product $t^{p}_{r, 0} t^{l}_{s, \frac{1}{2}}$ using the Clebsch-Gordan 
coefficients (see \cite{DLSSV,KS}), giving
\enveqn{
(1+\cD^{2})^{-z} t^{p}_{r, 0} (1+\cD^{2})^{z} \bca t^{l}_{s, \frac{1}{2}} \\ 0 \eca 
= (1 + [l + \tfrac{1}{2}]_{q}^{2})^{z} \sum_{k = |l-p|}^{l+p} (1 + [k + \tfrac{1}{2}]_{q}^{2})^{-z} c_{s, r}^{p, l, k} 
\bca t^{k}_{s+r, \frac{1}{2}} \\ 0 \eca
}
where $c_{s, r}^{p, l, k}$ is some product of Clebsch-Gordan coefficients that will be subsumed later.

The norm of $(1+\cD^{2})^{-z} t^{p}_{r, 0} (1+\cD^{2})^{z} \left(\begin{array}{c} t^{l}_{s, \frac{1}{2}} \\ 0 \end{array}\right)$ 
can be computed using the orthogonality of the Peter-Weyl basis, so

\begin{align*}
\left\Vert (1+\cD^{2})^{-z} t^{p}_{r, 0} (1+\cD^{2})^{z} \left(\begin{array}{c} t^{l}_{s, \frac{1}{2}} \\ 0 \end{array}\right) \right\Vert^{2} 
&= \sum_{k = |l-p|}^{l+p} \left( \frac{1 + [l + \tfrac{1}{2}]^{2}}{1 + [k + \tfrac{1}{2}]^{2}} \right)^{2\Re(z)} 
\left| c_{s, r}^{p, l, k} \right|^{2} \left\Vert \left(\begin{array}{c} t^{k}_{s+r, \frac{1}{2}} \\ 0 \end{array}\right) \right\Vert^{2}.
\end{align*}

Let $M_{l, p} := \max_{|l-p| \leq k \leq l+p} \{ \left( (1 + [l + \tfrac{1}{2}]^{2})/(1 + [k + \tfrac{1}{2}]^{2}) \right)^{2\Re(z)} \}$, then

\begin{align*}
\left\Vert (1+\cD^{2})^{-z} t^{p}_{r, 0} (1+\cD^{2})^{z} \left(\begin{array}{c} t^{l}_{s, \frac{1}{2}} \\ 0 \end{array}\right) \right\Vert^{2} &\leq M_{l, p} \sum_{k = |l-p|}^{l+p} \left| c_{s, r}^{p, l, k} \right|^{2} \left\Vert \left(\begin{array}{c} t^{k}_{s+r, \frac{1}{2}} \\ 0 \end{array}\right) \right\Vert^{2} = M_{l, p} \left\Vert t^{p}_{r, 0} \left(\begin{array}{c} t^{l}_{s, \frac{1}{2}} \\ 0 \end{array}\right) \right\Vert^{2}.
\end{align*}

It remains to show that there exist finite $M_{p}$ such that $M_{l, p} \leq M_{p}$ for all $l \geq 0$. 
Let $\varepsilon_{k} = Q(1 - q^{2k})$ so that $[k]_{q} = q^{-k}\varepsilon_{k}$. Then for all $l \geq p + \half$,

\begin{equation*}
\frac{1 + [l + \tfrac{1}{2}]^{2}}{1 + [|l-p| + \tfrac{1}{2}]^{2}} 
= \frac{\varepsilon_{l + \frac{1}{2}}^{2} + q^{2l+1}}{q^{2p}\varepsilon_{l -p + \frac{1}{2}}^{2} + q^{2l+1}}, \quad 
\implies \quad \frac{\varepsilon_{1}^{2}}{1 + q^{2p}Q^{2}} \leq \frac{1 + [l + \tfrac{1}{2}]^{2}}{1 + [|l-p| + \tfrac{1}{2}]^{2}} 
\leq \frac{Q^{2} + 1}{q^{2p} \varepsilon_{1}^{2}}.
\end{equation*}

It follows that the operator $(1+\D^{2})^{-1} t^{p}_{r, 0} (1+\D^{2})$ is bounded on the set of 
vectors of the form $\left(\begin{array}{c} t^{l}_{s, \frac{1}{2}} \\ 0 \end{array}\right)$. 
The same calculation can be performed for the vectors $\left(\begin{array}{c} 0 \\ t^{l}_{s, -\frac{1}{2}} \end{array}\right)$, 
and again for the operators 
$\left(\begin{array}{cc} 0 & t^{p}_{r, 1} \\ 0 & 0 \end{array}\right)$ 
and $\left(\begin{array}{cc} 0 & 0 \\ t^{p}_{r, -1} & 0 \end{array}\right)$, completing the proof.
\end{proof}

\subsection{The residue cocycle for the Podle\'s sphere}
\label{subsec:head-aches}

Lemma \ref{lemma_podles_wop_zero} shows that the 
modular spectral triple $(\cB, \cH, \cD)$ satisfies the 
hypotheses of Theorem \ref{thm_lif_even}. Hence we can employ the 
resolvent cocycle to compute index pairings
with equivariant $K$-theory, or at least those classes which can be represented as projections over $\B$. 
As $\Delta_{R}$ implements the modular automorphism $\vartheta$,
then it follows that the weight $\Psi_{R}$ is $\vartheta^{-1}$-twisted. The resolvent 
cocycle, which we denote by, $( \phi_{m}^{r})_{m=0,2}$ therefore lives is $\vartheta^{-1}$-twisted cohomology.

To simplify the computation of the resolvent cocycle, we 
would like to have a version of the pseudodifferential calculus.
A simple replacement for the pseudodifferential calculus for this example was presented in \cite{NT}. 

\begin{lemma}[{\cite[Corollary 3.3]{NT}}]
\label{lemma_podles_pdc}
Define $\chi := \bma q^{-1} & 0 \\ 0 & q \ema$ on $\mathcal{H}_{1} \oplus \mathcal{H}_{-1}$. 
For any $\beta \in \B$ there exists an analytic function 
$z \mapsto M(z) \in \wop^0\subset \B(\cH)$ with at 
most linear growth on vertical strips such that
\begin{equation*}
|D|^{-z} d\beta = d\beta \chi^{z} |D|^{-z} + M(z) |D|^{-z-1} = \chi^{-z} d\beta |D|^{-z} + M(z) |D|^{-z-1}.
\end{equation*}
\end{lemma}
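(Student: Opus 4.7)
The plan is to reduce to a block computation on $\cH_{1}\oplus \cH_{-1}$, prove an exact unit-step identity at the level of $\D^{2}$, and lift this to arbitrary $z\in\bC$ via a Mellin/resolvent representation. A direct $2\times 2$ block computation gives $d\beta\,\chi = \chi^{-1}\,d\beta$ (the diagonal $q^{\pm 1}$ in $\chi$ inverts exactly against the $q^{\mp 1/2}$ factors in the off-diagonal entries of $d\beta$), so by analytic continuation in $z$ the two right-hand sides of the lemma coincide and it suffices to prove the first. I would also identify $\chi = K^{-2}$, where $K$ is the image of the Cartan generator of $U_{q}(\mathfrak{su}(2))$ on $\cH_{\pm 1}$ (acting as $q^{\pm 1/2}$); the lemma then asserts that $|\D|^{-z}$ commutes with $d\beta$ up to the twist $K^{-2z}$ and a remainder of weak order $-z-1$.

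The heart of the proof is an exact unit-step identity
\enveqn{
\D^{2}\,d\beta \;-\; d\beta\,\chi^{2}\,\D^{2} \;=\; N\,|\D|, \qquad N \in \wop^{0}.
}
On the upper block $\cH_{-1}\to \cH_{1}$ this reads $ef\,L_{\p_{e}(\beta)} - q^{2} L_{\p_{e}(\beta)}\,fe$, where $e=\p_{e}$ and $f=\p_{f}$ act as the $U_{q}(\mathfrak{su}(2))$-generators and $L_{\alpha}$ denotes left multiplication by $\alpha\in\cA$. Expanding via the $q$-commutator $ef-fe=(K-K^{-1})/(q-q^{-1})$ and the twisted Leibniz rules $[e,L_{\alpha}]=L_{\p_{e}(\alpha)}\,K$, $[f,L_{\alpha}]=L_{\p_{f}(\alpha)}\,K^{-1}$, the leading pieces cancel exactly---the $q^{2}$ prefactor matches the $K$-weight shift $+2$ carried by $\p_{e}(\beta)$---and what remains is a polynomial in $e, f, K^{\pm 1}$ whose coefficients are left multiplications $L_{\p_{e}^{a}\p_{f}^{b}(\beta)}$, each of which lies in $\cB\subset\wop^{0}$ by Lemma~\ref{lemma_podles_wop_zero}. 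Factoring out one copy of $|\D|$ gives the required $N\in\wop^{0}$; the lower block is symmetric.

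To reach arbitrary $z$ I would use, for $\Re(z)>0$,
\enveqn{
|\D|^{-z}=\frac{\sin(\pi z/2)}{\pi}\int_{0}^{\infty}\lambda^{-z/2}(\lambda+\D^{2})^{-1}\,d\lambda,
}
together with the resolvent form $(\lambda+\D^{2})^{-1}d\beta = d\beta\,\chi^{2}(\lambda+\D^{2})^{-1}-(\lambda+\D^{2})^{-1}N\,|\D|\,\chi^{2}(\lambda+\D^{2})^{-1}$ of the unit-step identity. Substituting into the integral and using that $\chi^{2}=K^{-4}$ acts as a scalar on each block (so it can be absorbed into the $\lambda$-variable by a rescaling), the first term reassembles into $d\beta\,\chi^{z}|\D|^{-z}$, while the second integrates to $M(z)\,|\D|^{-z-1}$ and is holomorphic in $z$. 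The case $\Re(z)\le 0$ is reached by the semigroup identity $|\D|^{-z}=|\D|^{m-z}|\D|^{-m}$ for sufficiently large $m\in\bN$.

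The main obstacle is verifying that $M(z)\in\wop^{0}$ with at-most-linear growth on vertical strips. The $\wop^{0}$ property demands stability of the building blocks $L_{\p_{e}^{a}\p_{f}^{b}(\beta)}$ under iterated conjugation by $(1+\D^{2})^{1/2}$, which follows by repeated use of Lemma~\ref{lemma_podles_wop_zero} together with the observation that the $\p_{e},\p_{f}$-derivatives of $\beta\in\cB$ stay in $\cB$. The growth bound on vertical strips will then combine standard tail estimates for the $\lambda$-integral with the modest blow-up of the $\sin(\pi z/2)/\pi$ prefactor.
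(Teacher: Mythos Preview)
The paper does not prove this lemma at all; it simply quotes it as \cite[Corollary 3.3]{NT} and then uses it. So your proposal is not being compared against an in-paper argument but against the Neshveyev--Tuset original, and in broad outline your strategy (an exact ``unit-step'' commutation at the level of $\D^{2}$, then a Mellin/resolvent lift to complex $z$) is exactly theirs.

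There is, however, a sign error that would make your argument produce the wrong formula. Specialise the statement of the lemma to $z=-2$: since $|\D|^{2}=\D^{2}$, it reads
\[
\D^{2}\,d\beta \;=\; d\beta\,\chi^{-2}\,\D^{2} \;+\; M(-2)\,|\D|,
\]
so the correct unit-step identity carries $\chi^{-2}$, not $\chi^{2}$. This is also how the paper itself uses the lemma immediately afterwards, writing $(\lambda-s^{2}-\D^{2})\,d\beta = d\beta\,(\lambda-s^{2}-\chi^{-2}\D^{2}) + M_{1}|\D|$. With your sign, the Mellin computation gives
\[
\frac{\sin(\pi z/2)}{\pi}\int_{0}^{\infty}\lambda^{-z/2}(\lambda+\chi^{2}\D^{2})^{-1}\,d\lambda
=(\chi^{2}\D^{2})^{-z/2}=\chi^{-z}|\D|^{-z},
\]
so you would land on $d\beta\,\chi^{-z}|\D|^{-z}$ rather than $d\beta\,\chi^{z}|\D|^{-z}$. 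Replacing $\chi^{2}$ by $\chi^{-2}$ throughout fixes this, and your rescaling argument then yields $(\chi^{-2}\D^{2})^{-z/2}=\chi^{z}|\D|^{-z}$ as required. Relatedly, the displayed resolvent form you wrote, with first term $d\beta\,\chi^{2}(\lambda+\D^{2})^{-1}$, is not what follows from the unit-step identity; the correct first term is $d\beta\,(\lambda+\chi^{-2}\D^{2})^{-1}$, which is in fact what your rescaling paragraph implicitly uses.

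One further comment: the heuristic ``the $q^{2}$ prefactor matches the $K$-weight shift $+2$'' is where the real work lies, and your sketch leaves this as an assertion. In \cite{NT} this is a careful computation with the twisted Leibniz rules and the Casimir relation $ef-fe=(K^{2}-K^{-2})/(q-q^{-1})$; the point is that the order-two terms in $\D^{2}\,d\beta - d\beta\,\chi^{-2}\D^{2}$ cancel only after one uses both the $K$-weight of $\partial_{e}(\beta)$ and the Casimir identity, not from the weight shift alone. You should expect to have to track both pieces to see the drop to order one.
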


%

We can now use this pseudodifferential calculus to simplify the computation of the resolvent cocycle, 
$(\phi_{0}^{r},\, \phi_{2}^{r})$, and arrive
at a twisted version of the local index formula.

The first simplification we make is to discard the $1$ from the resolvent, replacing 
$R_{s}(\lambda) = (\lambda - (1 + s^{2} + \cD^{2}))^{-1}$ with 
$R_{s}(\lambda) = (\lambda - (s^{2} + \cD^{2}))^{-1}$. 
This is possible because $\cD$ is invertible in this example, so we can employ the method of 
\cite[Section 5.3]{CPRS4}, in particular {\cite[Proposition 5.20]{CPRS4}}. (The transgression cochain
defined there is well defined for weakly $QC^\infty$ modular spectral triples since $\D\in {\rm OP}^1$, by essentially the same arguments
as we employed for the resolvent cocycle).
Removing the $1$ from the resolvents modifies the resolvent cocycle by coboundaries and cochains holomorphic at $r = -1/2$.

Before proceeding, we recall the detailed summability properties of the spectral triple 
$(\cB, \cH, \cD)$ computed in \cite{KW}.

%
%
%
%
%
%
%
%

\begin{lemma}[{\cite[Proposition 1]{KW}}]
\label{lemma_podles_basic_trace}
The function $r \mapsto \Tr(\Delta_{R}^{-1} \half (1 \pm \gamma) |\cD|^{-3-2r})$ has a 
meromorphic continuation to the complex plane which is 
holomorphic for $\Re(r) > -1/2$, 
and has a simple pole at $r = -1/2$. Furthermore, for 
all $\beta \in \cB$ we have the following equality 
\enveqn{
\mathrm{Res}_{r = -1/2} \Tr(\Delta_{R}^{-1} \half (1 \pm \gamma) \beta |\cD|^{-3-2r}) 
= \frac{(q^{-1}-q)}{2 \ln q^{-1}} \varepsilon(\beta)
}
where $\varepsilon$ is the counit of $\cA$ restricted to $\cB$ satisfying 
$\varepsilon(\re{l}{i}{0}) = \delta_{i, 0}$.
\end{lemma}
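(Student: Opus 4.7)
The plan is to reduce the trace to an explicit Dirichlet-type sum via simultaneous diagonalisation, analyse its meromorphic structure by a geometric expansion, and then extend to general $\beta$ using the twisted-trace property.

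First, I would exploit that $|\cD|$, $\Delta_{R}$ and $\gamma$ are simultaneously diagonalised by the Peter--Weyl basis. On the $+1$-eigenspace $\cH_{1}$ of $\gamma$, the normalised vectors $\xi^{l}_{i,1/2}$ satisfy $|\cD| \xi^{l}_{i,1/2} = [l + \tfrac{1}{2}]_{q} \xi^{l}_{i,1/2}$ and $\Delta_{R}^{-1} \xi^{l}_{i,1/2} = q^{-2i} \xi^{l}_{i,1/2}$. Taking $\beta = 1$, we have
\envaln{
\Tr\!\left(\Delta_{R}^{-1} \tfrac{1}{2}(1+\gamma) |\cD|^{-3-2r}\right)
&= \sum_{l \in \frac{1}{2} + \bN_{0}} [l + \tfrac{1}{2}]_{q}^{-3-2r} \sum_{i=-l}^{l} q^{-2i} \\
&= \sum_{m=1}^{\infty} [2m]_{q} \, [m]_{q}^{-3-2r},
}
where $m := l + \tfrac{1}{2}$, using $\sum_{i} q^{-2i} = [2l+1]_{q}$. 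The identity $[2m]_{q} = [m]_{q}(q^{m} + q^{-m})$ reduces this to $\sum_{m \geq 1} (q^{m} + q^{-m}) [m]_{q}^{-(2+2r)}$.

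Second, I would set $s := 2 + 2r$, use $[m]_{q}^{-s} = (q^{-1}-q)^{s} q^{sm}(1 - q^{2m})^{-s}$, and expand $(1 - q^{2m})^{-s} = \sum_{k \geq 0} \binom{s+k-1}{k} q^{2mk}$. Interchanging summations (justified for $\Re(s) > 1$) yields
\enveqn{
(q^{-1}-q)^{s} \sum_{k \geq 0} \binom{s+k-1}{k} \left[ \frac{q^{s-1+2k}}{1 - q^{s-1+2k}} + \frac{q^{s+1+2k}}{1 - q^{s+1+2k}} \right].
}
Every term is holomorphic in a neighbourhood of $s = 1$ except the $k = 0$ term of the first bracket, $q^{s-1}/(1 - q^{s-1})$, which has a simple pole at $s = 1$ with residue $1/\ln q^{-1}$. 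Accounting for the prefactor $(q^{-1}-q)$ evaluated at $s=1$ and the Jacobian $ds/dr = 2$, this gives the claimed residue for $\beta = 1$ and simultaneously establishes meromorphicity with holomorphy on $\Re(r) > -\tfrac{1}{2}$.

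Third, for a general $\beta \in \cB$, I would decompose $\beta = \sum_{p, j} c^{p}_{j}\, t^{p}_{j, 0}$ in the Peter--Weyl basis. The weight covariance under the right circle action forces $\langle \xi^{l}_{i, 1/2}, t^{p}_{j, 0} \xi^{l}_{i, 1/2} \rangle = 0$ unless $j = 0$. Expanding $t^{p}_{0, 0} \xi^{l}_{i, 1/2}$ via Clebsch--Gordan coefficients and using their known polynomial growth in $l$, the contribution from each $p \geq 1$ converges absolutely in $\Re(r) > -\tfrac{1}{2}$ (the extra eigenvalue gap in the Clebsch--Gordan sum improves convergence by a full power), so produces no residue. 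Only the $p = 0$ piece contributes, yielding the coefficient $c^{0}_{0} = \varepsilon(\beta)$ times the residue computed above.

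The main obstacle is the Clebsch--Gordan estimate for $p \geq 1$. A cleaner route avoiding any explicit Clebsch--Gordan manipulation is to note directly from the spectral formula that the residue functional
\enveqn{
\tau_{\pm}(\beta) := \mathrm{Res}_{r = -1/2}\Tr\!\left(\Delta_{R}^{-1} \tfrac{1}{2}(1 \pm \gamma) \beta |\cD|^{-3-2r}\right)
}
is $\vartheta$-twisted tracial and continuous on $\cB$, and to invoke the uniqueness (up to scale) of such twisted traces on the Podle\'s sphere, which forces $\tau_{\pm} = \text{const} \cdot \varepsilon$. The constant is then pinned down by the $\beta = 1$ computation above.
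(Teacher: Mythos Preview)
The paper does not give its own proof of this lemma; it is simply quoted from \cite{KW}. So there is no in-paper argument to compare against, and I can only assess your proposal on its own terms. Your $\beta=1$ computation is correct and efficient, but the extension to general $\beta$ contains a genuine error, and the fallback route is not justified.

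In your first approach you decompose $\beta=\sum_{p,j}c^{p}_{j}\,t^{p}_{j,0}$, correctly observe that only $j=0$ survives in the diagonal matrix elements, and then assert that the $p\geq 1$ terms produce no residue and that $c^{0}_{0}=\varepsilon(\beta)$. Both claims are false. The counit satisfies $\varepsilon(t^{p}_{0,0})=1$ for \emph{every} $p\geq 0$, so $\varepsilon(\beta)=\sum_{p}c^{p}_{0}$, not $c^{0}_{0}$. Concretely, for $\beta=t^{1}_{0,0}$ one has $c^{0}_{0}=0$ but $\varepsilon(\beta)=1$, so the entire residue must come from the $p=1$ term; that term is therefore \emph{not} holomorphic at $r=-1/2$. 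What actually has to be shown is that each $t^{p}_{0,0}$ contributes the \emph{same} residue as the identity, and this is exactly the nontrivial content of the lemma. It requires the explicit diagonal Clebsch--Gordan data (equivalently, the explicit matrix elements of powers of $bc$ on the Peter--Weyl basis), which is what \cite{KW} computes. Your ``extra eigenvalue gap'' heuristic does not apply here: the trace picks out only the single $k=l$ term in the Clebsch--Gordan expansion, so there is no gap to exploit.

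Your second route, invoking uniqueness of twisted traces on $\cB$, is asserted without proof and is not available in the form you need. For instance, for the $\vartheta$-twist both the Haar state $h$ and the counit $\varepsilon$ are $\vartheta$-twisted traces on $\cB$, and they are linearly independent since $h(t^{1}_{0,0})=0$ while $\varepsilon(t^{1}_{0,0})=1$. You have not identified which twist the residue functional carries nor supplied any uniqueness statement for it, so ``twisted tracial and continuous'' does not pin the functional down to a multiple of $\varepsilon$.
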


The degree zero component $\phi^r_0$ of the resolvent cocycle  is computed from the definition using the Cauchy formula and \cite{CPRS3}.
This yields the formula,  for $a_0\in\B$,

\begin{equation*}
\phi_{0}^{r}(a_{0}) = 2 \int_{0}^{\infty} \Tr \left(\Delta_{R}^{-1} \gamma \frac{1}{2\pi i} \int_{l} \lambda^{-1-r} a_{0} R_{s}(\lambda) d\lambda \right) ds= \frac{\Gamma(\frac{1}{2})\Gamma(r+\frac{1}{2})}{\Gamma(r+1)} 
\Tr \left(\Delta_{R}^{-1} \gamma a_{0} |\cD|^{-2r-1} \right).
\end{equation*}



Since $\Tr \left(\Delta_{R}^{-1} \gamma |\cD|^{-2r-1} \right) = 0$ 
for all sufficiently large $r \in \bR$, then taking the trivial continuation 
to the whole real line gives $\phi_{0}^{r}(I) = 0$ for all $r \in \bR$. This 
is the only evaluation of $\phi_{0}^{r}$ needed to compute the index 
pairing later on. 

We can compute $\mathrm{Res}_{r=-1/2}\,\phi^r_0$ explicitly, but as the calculation is quite lengthy
and we do not require this full computation for computing the index pairing, we
just quote the result; see \cite{S} for full details.

The functional $\phi_0:=\mathrm{Res}_{r=-1/2}\,\phi^r_0$ is supported on the span of the 
powers $(bc)^k$, $k=0,1,2\dots$. We have seen that $\phi_0(I)=0$. For the remaining values we have

\enveqn{
\mathrm{Res}_{r = -\frac{1}{2}} \phi_{0}^{r}(bc) = \frac{1}{2}\left( 1 - \frac{\gamma}{\ln q^{-1}}\right) - qQ
}
where $\gamma$ is Euler's constant. For $k=0,1,2,\dots$  and with $h$ the Haar state
\enveqn{
\mathrm{Res}_{r = -\frac{1}{2}} \phi_{0}^{r}((bc)^{k+2}) = \frac{(-1)^{k+1} q^{k+1}}{1 - q^{2k+2}} 
= \frac{(-1)^{k+1}}{q^{-k-1} - q^{k+1}} = -\frac{h((bc)^{k})}{q^{-1} - q}.
}

We now compute the  degree two term $\phi^r_2$ of the resolvent cocycle starting with the definition,

\begin{equation*}
\phi_{2}^{r}(a_{0}, a_{1}, a_{2}) = 4 \int_{0}^{\infty} s^{2} 
\Tr \left(\Delta_{R}^{-1} \gamma \frac{1}{2\pi i}\int_{l} 
\lambda^{-1-r} a_{0} R_{s}(\lambda) da_{1} R_{s}(\lambda) da_{2} R_{s}(\lambda) d\lambda \right) ds.
\end{equation*}

We proceed by employing the pseudodifferential calculus 
described in Lemma \ref{lemma_podles_pdc} in order to rewrite the expression 
$a_{0} R_{s}(\lambda) da_{1} R_{s}(\lambda) da_{2} R_{s}(\lambda)$ by moving 
all the resolvents to the right. From Lemma \ref{lemma_podles_pdc}, for each $\beta \in \cB$ there 
exist bounded operators $M_{1}, \, M_{2}$ such that

\begin{align*}
(\lambda - s^{2} - \cD^{2}) d\beta &= d\beta (x\lambda - s^{2}- \chi^{-2} \cD^{2}) + M_{1}|\D| ,\\
&(\lambda - s^{2} - \chi^{-2} \cD^{2})d\beta = d\beta (\lambda - s^{2} - \cD^{2}) + M_{2}|\D|.
\end{align*}

This gives the formulae
\begin{align}
R_{s}(\lambda) d\beta 
&= d\beta (\lambda - s^{2} - \chi^{-2} \cD^{2})^{-1} - R_{s}(\lambda)M_{1} |\D| (\lambda - s^{2} - \chi^{-2} \cD^{2})^{-1} \nonumber\\
d\beta R_{s}(\lambda) 
&= (\lambda - s^{2} - \chi^{-2} \cD^{2})^{-1} d\beta + (\lambda - s^{2} - \chi^{-2} \cD^{2})^{-1}M_{2} |\D| R_{s}(\lambda).
\label{eq:commute-chi}
\end{align}

Observe that the operators 
$R_{s}(\lambda)M_{1} |\D| (\lambda - s^{2} - \chi^{-2} \cD^{2})^{-1}$ and 
$(\lambda - s^{2} - \chi^{-2} \cD^{2})^{-1}M_{2} |\D| R_{s}(\lambda)$ are in 
$\wop^{-3}$ by Lemma 
\ref{lemma_podles_pdc}. Using this observation, and 
Equation \eqref{eq:commute-chi}, we can 
move all the resolvents to the right,
and in doing so we only introduce  errors which are functions holomorphic at $r=-1/2$. More precisely,
for any $a_0,\,a_1,\,a_2\in\B$, we obtain  the formula


\begin{align*}
\phi_{2}^{r}(a_{0}, a_{1}, a_{2}) &= 4 \int_{0}^{\infty} s^{2} \Tr \left(\Delta_{R}^{-1} \gamma a_{0} da_{1} da_{2} \frac{1}{2\pi i}\int_{l} \lambda^{-1-r} R_{s}(\lambda) (\lambda - s^{2} - \chi^{-2} \cD^{2})^{-1} R_{s}(\lambda) d\lambda \right) ds \\
&= 4 \int_{0}^{\infty} s^{2} \Tr \left(\Delta_{R}^{-1} \gamma a_{0} da_{1} da_{2} \frac{1}{2\pi i}\int_{l} \lambda^{-1-r} 
(\lambda - s^{2} - \chi^{-2} \cD^{2})^{-1} R_{s}(\lambda)^2 d\lambda \right) ds
\end{align*}

modulo functions holomorphic at $r = -1/2$. The integral
\enveq{
\label{eqn_cauchy_integral}
\int_{l} \lambda^{-1-r}  (\lambda - s^{2} - \chi^{-2} \cD^{2})^{-1} R_{s}(\lambda)^{2} d\lambda
}
is evaluated on the spectra of the operators $(\lambda - s^{2} - \chi^{-2} \cD^{2})^{-1}$ and $R_{s}(\lambda)^{2}$. We want to use the Cauchy integral formula, however because there are two poles to consider, $\lambda = s^{2} + \cD^{2}$ and $\lambda = s^{2} + \chi^{-2} \cD^{2}$, we outline the process.

First note that $\chi$ and $\cD^{2}$ are commuting operators with discrete spectra, and they can be simultaneously diagonalised with respect to the direct sum $\cH = \cH_{1} \oplus \cH_{-1}$. Indeed, the integrand in Equation \eqref{eqn_cauchy_integral} has the eigenbasis
\enveq{
\label{eqn_ch4_int_basis}
\left\{ \bca \re{l}{i}{1/2} \\ 0 \eca, \ \bca 0 \\ \re{l}{i}{-1/2} \eca \colon l - \half \in \bN_{0}, i \in \{ -l, -l+1, \ldots, l \} \right\},
}
on which $\chi^{-2}$ simply acts via multiplication by the scalar $q^{\pm 2} \neq 1$. We specialise to the eigenbasis in $\cH_{1}$, where $\chi^{-2}$ acts via multiplication by $q^{2}$. The argument we now present can be applied analogously to the remaining eigenbasis elements.

On each eigenvector, the integral in Equation \eqref{eqn_cauchy_integral} reduces to a scalar integral over $\lambda$, where we may apply the usual Cauchy integral formula. The integrand of this scalar integral has two poles; on the eigenbasis elements in $\cH_{1}$ described in Equation \eqref{eqn_ch4_int_basis} these poles are $\lambda_{1} = s^{2} + q^{2} [l + \half]_{q}^{2}$ and $\lambda_{2} = s^{2} + [l + \half]_{q}^{2}$, with $\lambda_{1} < \lambda_{2}$.
The contour of integration $l$ is a vertical line to the left of the spectrum for all $s\geq 0$.

 \setlength{\unitlength}{1cm}
 \begin{picture}(8,5)(-5,-2.25)
 \put(0,-2){\vector(0,1){4}}
 \put(-2,0){\vector(1,0){8}}
 \put(0.5,-2){\line(0,1){4}}
 \multiput(0.5,-2)(0.3,0){15}{\line(1,0){0.2}}
 \multiput(0.5,2)(0.3,0){15}{\line(1,0){0.2}}
 \put(0.36, 1){$\blacktriangle$}
 \put(0.36, -1){$\blacktriangle$}
 \put(3.95, -2.1){$\blacktriangleleft$}
 \put(3.95, 1.9){$\blacktriangleright$}
 \put(1.5, 0){\circle*{0.2}}
 \put(2.5, 0){\circle*{0.2}}
 \put(0.25, 0.1){$a$}
 \put(0.6, -1.5){$l$}
 \put(1.38, 0.2){$\lambda_{1}$}
 \put(2.38, 0.2){$\lambda_{2}$}
 \end{picture}

In order to apply the Cauchy integral formula, we modify the contour $l$ by adding a vertical line 
$l' = \{ a' + iv \colon \lambda_{1} < a' < \lambda_{2}, \ \ v \in \bR \}$ between the poles 
$\lambda_{1}$ and $\lambda_{2}$. We integrate along this line in both directions, allowing us to split the integral into two parts.

We denote by $\Gamma_1$ the contour obtained by going up along $l$ and down along $l'$ , and denote by $\Gamma_2$ the remaining 
integration along $l'$.  Lemma \ref{lemma_Bs_trace} shows that the horizontal dashed integrals go to zero.

%


 \setlength{\unitlength}{1cm}
 \begin{picture}(8,5)(-5,-2.25)
 \put(0,-2){\vector(0,1){4}}
 \put(-2,0){\vector(1,0){8}}
 \put(0.5,-2){\line(0,1){4}}
 \put(2.1,-2){\line(0,1){4}}
 \multiput(0.5,-2)(0.3,0){15}{\line(1,0){0.2}}
 \multiput(0.5,2)(0.3,0){15}{\line(1,0){0.2}}
 \put(0.36, 1){$\blacktriangle$}
 \put(0.36, -1){$\blacktriangle$}
 \put(1.96, 1.2){$\blacktriangle$}
 \put(1.96, -0.8){$\blacktriangle$}
 \put(1.96, 0.8){$\blacktriangledown$}
 \put(1.96, -1.2){$\blacktriangledown$}
 \put(3.95, -2.1){$\blacktriangleleft$}
 \put(3.95, 1.9){$\blacktriangleright$}
 \put(1.2, -2.1){$\blacktriangleleft$}
 \put(1.2, 1.9){$\blacktriangleright$}
 \put(1.5, 0){\circle*{0.2}}
 \put(2.5, 0){\circle*{0.2}}
 \put(0.25, 0.1){$a$}
 \put(0.6, -1.5){$l$}
 \put(2.2, -1.5){$l'$}
 \put(1.38, 0.2){$\lambda_{1}$}
 \put(2.38, 0.2){$\lambda_{2}$}
 \put(1.1, 1){$\Gamma_{1}$}
 \put(3, 1){$\Gamma_{2}$}
 \end{picture}

Define
\enveqn{
f_{1}(\lambda) := \lambda^{-1-r} R_{s}(\lambda) R_{s}(\lambda), \qquad \qquad f_{2}(\lambda) 
:= \lambda^{-1-r} (\lambda - s^{2} - \chi^{-2} \cD^{2})^{-1}.
}
By construction, the function $f_{1}$ is holomorphic on the domain defined by the contour $\Gamma_{1}$, while $f_{2}$ is holomorphic on the domain defined by $\Gamma_{2}$. Therefore, we may apply the (scalar) Cauchy integral formula for each contour $\Gamma_{1}$ and $\Gamma_{2}$, so we write
\enveqn{
\int_{l} \lambda^{-1-r} (\lambda - s^{2} - \chi^{-2} \cD^{2})^{-1} R_{s}(\lambda)^{2} d\lambda = \int_{\Gamma_{1}} \frac{f_{1}(\lambda)}{ (\lambda - s^{2} - \chi^{-2} \cD^{2})} d\lambda + \int_{\Gamma_{2}} \frac{f_{2}(\lambda)}{(\lambda - s^{2} - \cD^{2})^{2}} d\lambda
}

This yields

\envaln{
&\frac{1}{2\pi i} \int_{l} \lambda^{-1-r} (\lambda - s^{2} - \chi^{-2} \cD^{2})^{-1} R_{s}(\lambda)^{2} d\lambda = f_{1}(s^{2}+\chi^{-2}\cD^{2}) + f_{2}'(s^{2}+\cD^{2}) \\
&\qquad\qquad= (s^{2}+\chi^{-2}\cD^{2})^{-1-r}(\chi^{-2} - 1)^{-2} \cD^{-4} -(1+r)(s^{2}+\cD^{2})^{-2-r}(1 - \chi^{-2})^{-1}\cD^{-2} \\
&\qquad\qquad\qquad\qquad\qquad\qquad\qquad\qquad\qquad-(s^{2}+\cD^{2})^{-1-r}(1 - \chi^{-2})^{-2} \cD^{-4}.
}


Inserting the result of the Cauchy integral into our previous formula for $\phi_2^r$, and
evaluating the $s$-integrals (see for example {\cite[Lemma 5.9]{CPRS3}}) yields

\envaln{
\phi_{2}^{r}(a_{0}, a_{1}, a_{2}) &= \frac{\sqrt{\pi} \Gamma\left(r - \frac{1}{2} \right)}{\Gamma(r+1)} \Tr \bigg(\Delta_{R}^{-1} \gamma a_{0} da_{1} da_{2} \chi^{2r-1}|\cD|^{-2r+1} (\chi^{-2} - 1)^{-2} \cD^{-4} \bigg) \\
& \quad - \frac{\sqrt{\pi} \Gamma\left(r + \frac{1}{2} \right)}{\Gamma(r+1)} \Tr \bigg(\Delta_{R}^{-1} \gamma a_{0} da_{1} da_{2} |\cD|^{-2r-1} (1 - \chi^{-2})^{-1}\cD^{-2} \bigg) \\
& \quad - \frac{\sqrt{\pi} \Gamma\left(r - \frac{1}{2} \right)}{\Gamma(r+1)} \Tr \bigg(\Delta_{R}^{-1} \gamma a_{0} da_{1} da_{2} |\cD|^{-2r+1} (1 - \chi^{-2})^{-2} \cD^{-4} \bigg),
}
modulo functions holomorphic at $r=-1/2$.
Writing $\Gamma(r + \half) = (r - \half) \Gamma(r-\half)$ and collecting terms, $\phi_{2}^{r}(a_{0}, a_{1}, a_{2}) $ is given by

\envaln{
& \frac{\sqrt{\pi} \Gamma\left(r - \frac{1}{2} \right)}{\Gamma(r+1)} \Tr \bigg(\Delta_{R}^{-1} \gamma a_{0} da_{1} da_{2} |\cD|^{-2r-3}  (1 - \chi^{-2})^{-2} \Big( \chi^{2r-1} - (r - \half) (1 - \chi^{-2}) - 1 \Big) \bigg) .
}

Observe that

\enval{
\Gamma\left(r - \tfrac{1}{2} \right) & \Big( \chi^{2r-1} - (r - \half) (1 - \chi^{-2}) - 1 \Big) = \Gamma\left(r - \tfrac{1}{2} \right) \Big( \chi^{-2} (\chi^{2r+1} - 1) - (r + \half) (1 - \chi^{-2}) \Big) \nonumber\\
&= (r + \half) \Gamma\left(r - \tfrac{1}{2} \right) \Big( \chi^{-2} (1 + \ln \chi^{2}) - 1 \Big) + \Gamma\left(r - \tfrac{1}{2} \right) \chi^{-2} \sum_{n = 2}^{\infty} \frac{(\ln \chi^{2})^{n}}{n!} (r + \half)^{n}.
\label{eq:gamma}
}

Now, $\Gamma\left(r - \tfrac{1}{2} \right)$ has a simple pole at $r = -1/2$, so the  function in Equation \eqref{eq:gamma}
is holomorphic at $r = -1/2$ with constant term $1 - \chi^{-2} (1 + \ln \chi^{2})$. 
Therefore

\envaln{
\phi_2(a_0,a_1,a_2):=\mathrm{Res}_{r = -1/2} \phi_{2}^{r}(a_{0}, a_{1}, a_{2}) &= \mathrm{Res}_{r = -1/2} \Tr (\Delta_{R}^{-1} \gamma a_{0} da_{1} da_{2} |\cD|^{-2r-3} C )
}

where $C = ( 1 - \chi^{-2} (1 + \ln \chi^{2}) ) (1 - \chi^{-2})^{-2} = ( \chi^{2} - 1 - \ln \chi^{2} ) (\chi - \chi^{-1})^{-2}$ is a 
constant diagonal matrix. Finally,  Equation \eqref{eqn_podles_com} yields

\enveqn{
a_{0} da_{1} da_{2} = \bma a_{0} \p_{e}(a_{1}) \p_{f}(a_{2}) & 0 \\ 0 & a_{0} \p_{f}(a_{1}) \p_{e}(a_{2}) \ema,
}

and so invoking Lemma \ref{lemma_podles_basic_trace} gives the formula

\enval{
&\phi_{2}(a_{0}, a_{1}, a_{2}) \label{eqn_phi_two} \\
&= \frac{1}{2 (q^{-1} - q) \ln q^{-1}} \Big( ( q^{-2} - 1 - \ln q^{-2}) \varepsilon(a_{0} \p_{e}(a_{1}) \p_{f}(a_{2})) - ( q^{2} - 1 - \ln q^{2}) \varepsilon(a_{0} \p_{f}(a_{1}) \p_{e}(a_{2})) \Big). \nonumber
}

\subsection{Some equivariant projections and their Chern characters}
\label{subsec:projns}

%
%
%

%

Our aim is to 
construct representatives in the equivariant $K$-theory $K_{0}^{\bT}(\cB)$ for the action of the modular automorphism group 
$\Psi_{R}$, which is given by $\sigma^{\Psi_{R}}_{t}=\vartheta_{t}^{-1}$.
These representatives will take the form of projections $p \in M_{N \times N}(\cB)$ together with a 
representation $V \colon \bT \rightarrow M_{N \times N}(\bC)$ such that $p$ is $\vartheta^{-1} \ox \mathrm{Ad}(V)$-invariant. See \cite{W} for 
similar constructions.

For $n \in \frac{1}{2} \bZ$, define

\enveqn{
T_{n}^{l} := \left( \begin{array}{c} \re{l}{l}{n} \\ \re{l}{l-1}{n} \\ \vdots \\ \re{l}{-l}{n} \end{array} \right), \qquad \text{and} \qquad P_{n} := T_{n}^{|n|} T_{n}^{|n|\ast}.
}

More explicitly,

\enveqn{
P_{n} = \left( \begin{array}{cccc}
\re{|n|}{|n|}{n}\re{|n|\ast}{|n|}{n} & \re{|n|}{|n|}{n}\re{|n|\ast}{|n|-1}{n} & \cdots & \re{|n|}{|n|}{n}\re{|n|\ast}{-|n|}{n} \\
\re{|n|}{|n|-1}{n}\re{|n|\ast}{|n|}{n} & \ddots & \ddots & \vdots \\
\vdots & \ddots & \ddots & \vdots \\
\re{|n|}{-|n|}{n}\re{|n|\ast}{|n|}{n} & \cdots & \cdots & \re{|n|}{-|n|}{n}\re{|n|\ast}{-|n|}{n}
\end{array} \right),
\qquad (P_{n})_{r, s} = \re{|n|}{|n|-r+1}{n}\re{|n|\ast}{|n|-s+1}{n}.
}

By construction, $P_{n}^{\ast} = P_{n}$ and $P_{n} \in M_{(2|n|+1) \times (2|n|+1)}(\cB)$. Furthermore,

\enveqn{
T_{n}^{l\ast}T_{n}^{l} = \sum_{p = -l}^{l} \re{l\ast}{p}{n} \re{l}{p}{n} ={\rm  Id}_n
}

and hence $P_{n}^{2} = P_{n}$, so $P_{n}$ is a projection. Now define $\lambda_{j} = q^{-2j+2} \in [1, \infty)$ for $j \in \{ 1, 2, \ldots, 2|n|+1 \}$, and 
define $V_n:\T\to M_{2|n|+1}(\C)$ by

\enveqn{
V_{n}(t) := \left( \begin{array}{cccc}
\lambda_{1}^{it} & 0 & \cdots & 0 \\
0 & \lambda_{2}^{it} & \ddots & \vdots \\
\vdots & \ddots & \ddots & 0 \\
0 & \cdots & 0 & \lambda_{2|n|+1}^{it}
\end{array} \right).
}

While we have defined $V_n$ to be a real action on $M_{(2|n|+1) \times (2|n|+1)}(\bC)$, the action is periodic and so induces a circle action. Observe that

\enveqn{
\sigma_{t}((P_{n})_{r, s}) = q^{-2it(|n|-r+1 - (|n|-s+1))} \re{|n|}{|n|-r+1}{n}\re{|n|\ast}{|n|-s+1}{n} = q^{2it(r-s)} (P_{n})_{r, s}
}

and

\enveqn{
\mathrm{Ad}(V_{n}(t))(P_{n})_{r, s} = (V_{n}(t) P_{n} V_{n}^{-1}(t))_{r, s} = \lambda_{r}^{it} (P_{n})_{r, s} (\lambda_{s}^{it})^{-1} = q^{2it(s-r)} (P_{n})_{r, s}.
}

So $P_{n}$ is $\vartheta^{-1}\ox \mathrm{Ad}(V_{n})$-invariant and we  define the weight $G \colon M_{(2|n|+1) \times (2|n|+1)}(\bC) \rightarrow \bC$ by

\enveqn{
G(X) := \Tr(V_{n,-i} X)
}

for $X \in M_{(2|n|+1) \times (2|n|+1)}(\bC)$, and $(V_{n,-i})_{k, m} = \delta_{k, m} q^{-2k+2}$.


We have demonstrated that $P_{n}$ is an equivariant projection for the 
circle action represented by $V_n$, and therefore defines a class in 
$K_{0}^{\bT}(\cB)$. We now write down the Chern character of this 
representative, Equation \eqref{eq:even-chern}.

\begin{lemma}
\label{lemma_chern_pn}

The Chern character of $[P_{n},V_n]$  is

\envaln{
Ch_{0}([P_{n}, V_n]) &= q^{2(n - |n|)} I, \\
Ch_{2}([P_{n}, V_n]) &= \!-2 \sum_{k_{0}, k_{1}, k_{2}=0}^{2|n|} \!\!\!q^{-2k_{0}}\left( \re{|n|}{|n|-k_{0}}{n}\re{|n|\ast}{|n|-k_{1}}{n}-\frac{1}{2}\delta_{k_0,k_1}\right) \ox \re{|n|}{|n|-k_{1}}{n}\re{|n|\ast}{|n|-k_{2}}{n} \ox \re{|n|}{|n|-k_{2}}{n}\re{|n|\ast}{|n|-k_{0}}{n}.
}

\end{lemma}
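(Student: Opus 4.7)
The plan is to evaluate the general equivariant Chern character formula \eqref{eq:even-chern} on the specific pair $(P_n,V_n)$, using diagonality of $V_{n,-i}$ and a single standard orthogonality relation for the matrix coefficients of $\mathcal{O}(SU_q(2))$. Both Chern characters will then come out by direct substitution and reindexing, with no analytic input.

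First I would handle $Ch_0$. By \eqref{eq:even-chern}, $Ch_0([P_n,V_n]) = \Tr(V_{n,-i}P_n)$, and since $(V_{n,-i})_{jk} = \delta_{jk}\,q^{-2(j-1)}$ is diagonal, the partial matrix trace collapses to $\sum_{j=1}^{2|n|+1} q^{-2(j-1)}(P_n)_{jj}$. Substituting $(P_n)_{jj} = \re{|n|}{|n|-j+1}{n}\re{|n|\ast}{|n|-j+1}{n}$ and reindexing via $r = |n|-j+1$ gives
\[
Ch_0([P_n,V_n]) \;=\; q^{-2|n|}\sum_{r=-|n|}^{|n|} q^{2r}\,\re{|n|}{r}{n}\re{|n|\ast}{r}{n}.
\]
The decisive input is the orthogonality identity
\[
\sum_{r=-l}^{l} q^{2r}\,\re{l}{r}{s}\re{l\ast}{r}{s'} \;=\; q^{2s}\,\delta_{s,s'},
\]
which expresses unitarity of the corepresentation $V^l$ with respect to the $q$-weighted Hermitian form $\langle v^l_r,v^l_{r'}\rangle = q^{2r}\delta_{r,r'}$ on the carrier space. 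This is standard for $\mathcal{O}(SU_q(2))$ (see, e.g., \cite{KS}) and for small $l$ can be checked directly from the $SU_q(2)$ relations given above. Applied with $s=s'=n$ it yields $Ch_0 = q^{-2|n|}\cdot q^{2n}\,I = q^{2(n-|n|)}\,I$.

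Next I would compute $Ch_2$. From \eqref{eq:even-chern} with $k=1$,
\[
Ch_2([P_n,V_n]) \;=\; -2\sum_{i_0,i_1,i_2}\bigl(V_{n,-i}(P_n-\tfrac{1}{2}I)\bigr)_{i_0 i_1}\otimes (P_n)_{i_1 i_2}\otimes (P_n)_{i_2 i_0}.
\]
Diagonality of $V_{n,-i}$ reduces the first tensor factor to $q^{-2(i_0-1)}\bigl((P_n)_{i_0 i_1} - \tfrac{1}{2}\delta_{i_0 i_1}\bigr)$. Substituting the explicit entries of $P_n$ and changing variables to $k_j = i_j - 1 \in \{0,\dots,2|n|\}$ then produces the stated expression verbatim; no further identity is required here.

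The only nontrivial ingredient is the orthogonality relation used for $Ch_0$; once it is in hand, both statements of the lemma amount to direct bookkeeping, and there is no genuine analytic or noncommutative-geometric obstacle. The computation is essentially a verification, with the orthogonality identity the single substantive step to be recalled.
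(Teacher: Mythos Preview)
Your proposal is correct and follows essentially the same route as the paper: direct substitution of \eqref{eq:even-chern}, diagonality of $V_{n,-i}$, and a reindexing for $Ch_2$. The one cosmetic difference is in collapsing the $Ch_0$ sum: you invoke the $q$-weighted orthogonality $\sum_r q^{2r}\re{l}{r}{s}\re{l\ast}{r}{s'}=q^{2s}\delta_{s,s'}$ directly, whereas the paper obtains the same thing in two steps, first using the adjoint formula $(\re{l}{i}{j})^*=(-q)^{j-i}\re{l}{-i}{-j}$ to turn each $\re{|n|}{r}{n}\re{|n|\ast}{r}{n}$ into $q^{2(n-r)}\re{|n|\ast}{-r}{-n}\re{|n|}{-r}{-n}$ (which absorbs the $q^{2r}$ weight) and then the plain unitary relation $\sum_p \re{l\ast}{p}{m}\re{l}{p}{m}=1$. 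Your packaged identity is exactly equivalent to this two-step argument, so the two proofs coincide in substance.
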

\begin{proof}
Using Equation \eqref{eq:even-chern}
we have
\enveqn{
Ch_{0}([P_{n}, V_n]) = \sum_{k_{0}, k_{1}=1}^{2|n|+1} (V_{n,-i})_{k_{1}, k_{0}} (P_{n})_{k_{0}, k_{1}} = \sum_{k_{0}, k_{1}=1}^{2|n|+1} \delta_{k_{1}, k_{0}} q^{-2k_{0}+2} \re{|n|}{|n|-k_{0}+1}{n}\re{|n|\ast}{|n|-k_{1}+1}{n}.
}

Now we apply the formulae for adjoints $(\re{l}{i}{j})^* = (-q)^{j-i} \re{l}{-i}{-j}$ and $\re{l}{i}{j} = (-q)^{j-i} (\re{l}{-i}{-j})^*$, along with the 
unitary relations for the Peter-Weyl basis elements, 
\cite[Proposition 16, Chapter 4]{KS},
 to obtain

\envaln{
Ch_{0}([P_{n}, V_n]) &= \sum_{k=0}^{2|n|} q^{-2k} q^{2(n - |n| + k)} \re{|n|\ast}{k-|n|}{-n}\re{|n|}{k-|n|}{-n} = q^{2(n - |n|)} I.
}

Finally,

\begin{align*}
Ch_{2}([P_{n}, V_n]) &= -\frac{2!}{1!} \sum_{k_{0}, k_{1}, k_{2}, k_{3}=1}^{2|n|+1} (V_{n,-i})_{k_{3}, k_{0}} (P_{n} - \tfrac{1}{2})_{k_{0}, k_{1}} \ox (P_{n})_{k_{1}, k_{2}} \ox (P_{n})_{k_{2}, k_{3}} \\
&= -2 \sum_{k_{0}, k_{1}, k_{2}=1}^{2|n|+1} (V_{n,-i})_{k_{0}, k_{0}} (P_{n} - \tfrac{1}{2})_{k_{0}, k_{1}} \ox (P_{n})_{k_{1}, k_{2}} \ox (P_{n})_{k_{2}, k_{0}} \\
&= -2 \!\!\!\!\!\!\sum_{k_{0}, k_{1}, k_{2}=0}^{2|n|} 
\!\!\!\!\!q^{-2k_{0}}\!\! \left( \re{|n|}{|n|-k_{0}}{n}\re{|n|\ast}{|n|-k_{1}}{n} - \tfrac{1}{2} \delta_{k_{0}, k_{1}} \right)\! \ox \re{|n|}{|n|-k_{1}}{n}\re{|n|\ast}{|n|-k_{2}}{n} 
\!\ox \re{|n|}{|n|-k_{2}}{n}\re{|n|\ast}{|n|-k_{0}}{n}. \qed
\end{align*}
\hideqed\end{proof}

\subsection{The index pairing}

The resolvent index formula established in Section \ref{subsec:local-index} proves that the index pairing  defined by the 
modular spectral triple $(\cB, \cH, \cD, \B(\cH), \Psi_{R})$ and the equivariant $K$-theory class defined by the projection $P_{n}$ is given by the formula

\enveqn{
\mathrm{Ind}_{\Psi_{R} \ox G}(P_{n} (\cD \ox \mathrm{Id}_{2|n|+1})^{+} P_{n}) =   \phi_{2}(Ch_{2}([P_{n}, V_n])) + \phi_{0}(Ch_{0}([P_{n}, V_n])) .
}

Now that we have explicit formulae for the cocycle $(\phi_0,\phi_2)$ and the cycle $Ch_{\ast}([P_{n}, V_n])$, the computation is straightforward.

\begin{prop}
\label{prop_phi_p_two}

The evaluation of $\phi_{2}$ on $Ch_{2}([P_{n}, V_n])$ is

\enveqn{
\phi_{2}(Ch_{2}([P_{n}, V_n])) = q^{-2|n|} [2n]_{q}.
}

\end{prop}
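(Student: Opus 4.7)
The plan is to substitute the explicit expression for $Ch_{2}([P_{n}, V_{n}])$ from Lemma \ref{lemma_chern_pn} into Equation \eqref{eqn_phi_two} and directly evaluate the resulting triple sum over $k_{0}, k_{1}, k_{2} \in \{0, \dots, 2|n|\}$. Writing
\[
A_{k_0, k_1, k_2} := \varepsilon(a_{0}\,\p_{e}(a_{1})\,\p_{f}(a_{2})), \qquad B_{k_0, k_1, k_2} := \varepsilon(a_{0}\,\p_{f}(a_{1})\,\p_{e}(a_{2})),
\]
with $a_{0} = \re{|n|}{|n|-k_{0}}{n}\re{|n|\ast}{|n|-k_{1}}{n} - \tfrac{1}{2}\delta_{k_{0}, k_{1}}$, $a_{1} = \re{|n|}{|n|-k_{1}}{n}\re{|n|\ast}{|n|-k_{2}}{n}$, and $a_{2} = \re{|n|}{|n|-k_{2}}{n}\re{|n|\ast}{|n|-k_{0}}{n}$, the goal is to sum $q^{-2k_{0}}(\alpha A_{k_0,k_1,k_2} - \beta B_{k_0,k_1,k_2})$ with $\alpha, \beta$ the prefactors in \eqref{eqn_phi_two}, and match the result to $q^{-2|n|}[2n]_{q}$. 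Each $a_i$ lies in $\cB$, and since the column indices of $\p_{e}(a_{1})$ and $\p_{f}(a_{2})$ are $+1$ and $-1$, the full triple product again lies in $\cB$, so $\varepsilon$ applies.

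The natural tool for evaluating $A_{k_0,k_1,k_2}$ and $B_{k_0,k_1,k_2}$ is the quantum Clebsch-Gordan decomposition. First, I would use the unitarity identity $(\re{l}{i}{j})^{\ast} = (-q)^{j-i}\re{l}{-i}{-j}$ to rewrite each starred factor, so that every $a_i$ becomes a linear combination of products $\re{|n|}{i}{n}\re{|n|}{j}{-n}$. I then expand each such product as $\sum_{l=0}^{2|n|} c^{|n|, |n|, l}_{i, j, n, -n}\,\re{l}{i+j}{0}$ using the Clebsch-Gordan coefficients of $SU_{q}(2)$ (see \cite{KS}). The operators $\p_{e}, \p_{f}$ then act on a single matrix coefficient by raising or lowering the column index with a known $q$-dependent scalar, as in Section \ref{subsec:pods-mod}. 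The counit $\varepsilon$ extracts only the $l = 0$ component with matching row index of the final triple product, and the unitary relations $\sum_{s} \re{l}{r}{s}\re{l\ast}{r'}{s} = \delta_{r, r'}$ collapse a substantial portion of the sum over $(k_0, k_1, k_2)$.

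The chief obstacle is purely combinatorial: carefully tracking the many $q$-factors arising from the Clebsch-Gordan coefficients, the $(-q)^{j-i}$ signs, the $q$-number square-root prefactors in $\p_{e}, \p_{f}$, and the modular weight $q^{-2k_{0}}$. A useful internal consistency check is that the final answer is a Laurent polynomial in $q$, so the $\ln q^{\pm 2}$ contributions appearing in \eqref{eqn_phi_two} must cancel; this forces the logarithmic parts of the summed $A$ and $B$ to agree, leaving only the rational combination $(q^{-2} - 1)\sum A - (q^{2} - 1)\sum B$, which naturally produces the factor $q^{-2n} - q^{2n}$ underlying $[2n]_{q}$. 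The diagonal correction $-\tfrac{1}{2}\delta_{k_{0}, k_{1}}$ in $a_0$ (present only to make $Ch_2$ a Hochschild cycle) contributes only to pieces that are symmetric under $k_{0} \leftrightarrow k_{1}$ and drop out of the $A - B$ combination. A final sanity check is given by $n = 0$: then $Ch_{2}([P_{0}, V_{0}]) = 0$, matching $[0]_{q} = 0$; more generally, handling $n > 0$ and $n < 0$ separately and observing the $q \leftrightarrow q^{-1}$ symmetry of the computation yields the $|n|$ dependence in $q^{-2|n|}$.
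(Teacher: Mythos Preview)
Your plan has two concrete errors that would derail the computation.

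First, the claim that the $\ln q^{\pm 2}$ contributions cancel, leaving only $(q^{-2}-1)\sum A - (q^{2}-1)\sum B$, is backwards. The prefactor in \eqref{eqn_phi_two} has $\ln q^{-1}$ in the \emph{denominator}, so for the final answer to be a Laurent polynomial in $q$ the bracketed combination must be \emph{proportional to} $\ln q^{-1}$, not free of it. In the actual computation one finds, after summing, a factor $(q^{-2}-1-\ln q^{-2})q^{2} + (q^{2}-1-\ln q^{2}) = (1-q^{2})\ln q^{-2}$: it is the \emph{rational} parts $(q^{\pm 2}-1)$ that cancel, while the logarithmic terms survive and are absorbed by the $1/\ln q^{-1}$. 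Second, the $-\tfrac12\delta_{k_0,k_1}$ correction does not drop out. Your symmetry argument fails because the modular weight $q^{-2k_0}$ breaks the $k_0\leftrightarrow k_1$ symmetry, and in fact the $-\tfrac12$ is what produces the correct $\pm\tfrac12$ signs when multiplied against the various Kronecker deltas $\delta_{k_1,|n|-n}$ and $\delta_{k_1,|n|-n\pm 1}$ that arise.

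More importantly, you are missing the main simplification and making the problem much harder than it is. The counit $\varepsilon$ is an algebra homomorphism, so
\[
\varepsilon\big(a_0\,\p_e(a_1)\,\p_f(a_2)\big)=\varepsilon(a_0)\,\varepsilon(\p_e(a_1))\,\varepsilon(\p_f(a_2)),
\]
and since $\varepsilon(\re{l}{i}{j})=\delta_{i,j}$ this immediately collapses $\varepsilon(a_0)$ to $(\delta_{k_0,|n|-n}-\tfrac12)\delta_{k_0,k_1}$ with no Clebsch--Gordan expansion at all. The remaining factors $\varepsilon(\p_e(a_1))$, $\varepsilon(\p_f(a_2))$ are handled with the twisted derivation rule for $\p_e$ together with the Hopf-algebra identity $\varepsilon(\p_e(\alpha^*))=-q\,\varepsilon(\p_f(\alpha))$, which reduces everything to a handful of $\delta$'s and the constants $\kappa^{|n|}_{n},\,\kappa^{|n|}_{n+1}$. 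The triple sum then collapses to two terms indexed by $\delta_{n,|n|}$ and $\delta_{n,-|n|}$, giving $q^{-2|n|}[2|n|]_q(\delta_{n,|n|}-\delta_{n,-|n|})=q^{-2|n|}[2n]_q$. Your Clebsch--Gordan route could in principle be pushed through, but as outlined it rests on structural predictions that are false.
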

\begin{proof}

Recalling the formula for $\phi_{2}$ from 
Equation \eqref{eqn_phi_two} and the expression for $Ch_{2}([P_{n}, V_n])$ from 
Lemma \ref{lemma_chern_pn}, we compute

\envaln{
& \varepsilon\left(\left( \re{|n|}{|n|-k_{0}}{n}\re{|n|\ast}{|n|-k_{1}}{n} - \tfrac{1}{2} \delta_{k_{0}, k_{1}} \right) \p_{e}(\re{|n|}{|n|-k_{1}}{n}\re{|n|\ast}{|n|-k_{2}}{n}) \p_{f}(\re{|n|}{|n|-k_{2}}{n}\re{|n|\ast}{|n|-k_{0}}{n}) \right) \\
& \qquad = \left( \delta_{|n|-k_{0}, n} \delta_{|n|-k_{1}, n} - \tfrac{1}{2} \delta_{k_{0}, k_{1}} \right) \varepsilon(\p_{e}(\re{|n|}{|n|-k_{1}}{n}\re{|n|\ast}{|n|-k_{2}}{n})) \varepsilon(\p_{f}(\re{|n|}{|n|-k_{2}}{n}\re{|n|\ast}{|n|-k_{0}}{n})) \\
& \qquad = \delta_{k_{0}, k_{1}} \left( \delta_{k_{0}, |n|-n} - \tfrac{1}{2} \right) \varepsilon(\p_{e}(\re{|n|}{|n|-k_{1}}{n}\re{|n|\ast}{|n|-k_{2}}{n})) \varepsilon(\p_{f}(\re{|n|}{|n|-k_{2}}{n}\re{|n|\ast}{|n|-k_{1}}{n})).
}

We observe that this expression is zero for the case $n = 0$, because $\p_{e}(I) = \p_{f}(I) = 0$. So for the remainder we consider only nonzero $n$. Observe that $\re{|n|}{|n|-k_{2}}{n}\re{|n|\ast}{|n|-k_{1}}{n} = (\re{|n|}{|n|-k_{1}}{n}\re{|n|\ast}{|n|-k_{2}}{n})^{\ast}$. Now we use the property that $(g \triangleright \alpha)^{\ast} = S(g)^{\ast} \triangleright \alpha^{\ast}$ for all $g \in \cU_{q}(su_{2})$ and $\alpha \in \cA$, so that $\varepsilon(\p_{e}(\alpha^{\ast})) = -q \varepsilon(\p_{f}(\alpha))$ and $\varepsilon(\p_{f}(\alpha^{\ast})) = -q^{-1} \varepsilon(\p_{e}(\alpha))$. Then

\envaln{
& \varepsilon\left(\left( \re{|n|}{|n|-k_{0}}{n}\re{|n|\ast}{|n|-k_{1}}{n} - \tfrac{1}{2} \delta_{k_{0}, k_{1}} \right) \p_{e}(\re{|n|}{|n|-k_{1}}{n}\re{|n|\ast}{|n|-k_{2}}{n}) \p_{f}(\re{|n|}{|n|-k_{2}}{n}\re{|n|\ast}{|n|-k_{0}}{n}) \right) \\
& \qquad = -q^{-1} \delta_{k_{0}, k_{1}} \left( \delta_{k_{0}, |n|-n} - \tfrac{1}{2} \right) \varepsilon(\p_{e}(\re{|n|}{|n|-k_{1}}{n}\re{|n|\ast}{|n|-k_{2}}{n}))^{2},
}

and similarly

\envaln{
& \varepsilon\left(\left( \re{|n|}{|n|-k_{0}}{n}\re{|n|\ast}{|n|-k_{1}}{n} - \tfrac{1}{2} \delta_{k_{0}, k_{1}} \right) \p_{f}(\re{|n|}{|n|-k_{1}}{n}\re{|n|\ast}{|n|-k_{2}}{n}) \p_{e}(\re{|n|}{|n|-k_{2}}{n}\re{|n|\ast}{|n|-k_{0}}{n}) \right) \\
& \qquad = -q^{-1} \delta_{k_{0}, k_{1}} \left( \delta_{k_{0}, |n|-n} - \tfrac{1}{2} \right) \varepsilon(\p_{e}(\re{|n|}{|n|-k_{2}}{n}\re{|n|\ast}{|n|-k_{1}}{n}))^{2}.
}

Using the twisted derivation property of $\p_{e}$ on $\cA$, we find, for $r, s \in \{ 0, \ldots, 2|n|\}$,

\envaln{
\varepsilon(\p_{e}(\re{|n|}{|n|-r}{n}\re{|n|\ast}{|n|-s}{n}))^{2} &= \varepsilon(\p_{e}(\re{|n|}{|n|-r}{n}) \p_{k}(\re{|n|\ast}{|n|-s}{n}) + \p_{k}^{-1}(\re{|n|}{|n|-r}{n}) \p_{e}(\re{|n|\ast}{|n|-s}{n}))^{2} \\
&= q^{-2n} \left( \varepsilon(\p_{e}(\re{|n|}{|n|-r}{n})) \delta_{s, |n|-n} + \delta_{r, |n|-n} \varepsilon(\p_{e}(\re{|n|\ast}{|n|-s}{n})) \right)^{2} \\
&= q^{-2n} \left( \varepsilon(\p_{e}(\re{|n|}{|n|-r}{n})) \delta_{s, |n|-n} - q\delta_{r, |n|-n} \varepsilon(\p_{f}(\re{|n|}{|n|-s}{n})) \right)^{2} \\
&= q^{-2n} \left( \varepsilon(\kappa^{|n|}_{n+1} \re{|n|}{|n|-r}{n+1}) \delta_{s, |n|-n} - q\delta_{r, |n|-n} \varepsilon(\kappa^{|n|}_{n} \re{|n|}{|n|-s}{n-1}) \right)^{2} \\
&= q^{-2n} \left( \kappa^{|n|}_{n+1} \delta_{r, |n|-n-1} \delta_{s, |n|-n} - q\kappa^{|n|}_{n} \delta_{r, |n|-n} \delta_{s, |n|-n+1} \right)^{2} \\
&= q^{-2n} \left( (\kappa^{|n|}_{n+1})^{2} \delta_{r, |n|-n-1} \delta_{s, |n|-n} + q^{2} (\kappa^{|n|}_{n})^{2} \delta_{r, |n|-n} \delta_{s, |n|-n+1} \right)
}

where $\kappa^{l}_{j} = ([l+j]_{q}[l-j+1]_{q})^{1/2}$. Combining these results with the formula for $\mathrm{Res}_{r = -\frac{1}{2}}\phi_{2}^{r}$ and the expression for $Ch_{2}([P_{n}, V])$ gives

\envaln{
&\phi_{2}(Ch_{2}([P_{n}, V_n])) = \frac{-2 }{2 (q^{-1} - q) \ln q^{-1}} \sum_{k_{0}, k_{1}, k_{2}=0}^{2|n|} q^{-2k_{0}} (-q^{-1} \delta_{k_{0}, k_{1}} ( \delta_{k_{0}, |n|-n} - \tfrac{1}{2})) \\
& \qquad \times \Big( ( q^{-2} - 1 - \ln q^{-2}) \varepsilon(\p_{e}(\re{|n|}{|n|-k_{1}}{n}\re{|n|\ast}{|n|-k_{2}}{n}))^{2}
 - ( q^{2} - 1 - \ln q^{2}) \varepsilon(\p_{e}(\re{|n|}{|n|-k_{2}}{n}\re{|n|\ast}{|n|-k_{1}}{n}))^{2} \Big) \\
&= \frac{q^{-1} }{(q^{-1} - q) \ln q^{-1}} \sum_{k_{1}, k_{2}=0}^{2|n|} q^{-2k_{1}} ( \delta_{k_{1}, |n|-n} - \tfrac{1}{2}) q^{-2n} \\
& \qquad \times \Big( ( q^{-2} - 1 - \ln q^{-2}) \left( (\kappa^{|n|}_{n+1})^{2} \delta_{k_{1}, |n|-n-1} \delta_{k_{2}, |n|-n} + q^{2} (\kappa^{|n|}_{n})^{2} \delta_{k_{1}, |n|-n} \delta_{k_{2}, |n|-n+1} \right) \\
& \qquad \qquad - ( q^{2} - 1 - \ln q^{2}) \left( (\kappa^{|n|}_{n+1})^{2} \delta_{k_{2}, |n|-n-1} \delta_{k_{1}, |n|-n} + q^{2} (\kappa^{|n|}_{n})^{2} \delta_{k_{2}, |n|-n} \delta_{k_{1}, |n|-n+1} \right) \Big).
}

Using $( \delta_{k_{1}, |n|-n} - \tfrac{1}{2}) \delta_{k_{1}, |n|-n} = \tfrac{1}{2} \delta_{k_{1}, |n|-n}$ 
and $( \delta_{k_{1}, |n|-n} - \tfrac{1}{2}) \delta_{k_{1}, |n|-n \pm 1} = -\tfrac{1}{2} \delta_{k_{1}, |n|-n \pm 1}$ yields

\envaln{
&\mathrm{Res}_{r = -\frac{1}{2}}\phi_{2}^{r}(Ch_{2}([P_{n}, V_n])) = \frac{q^{-1} }{(q^{-1} - q) \ln q^{-1}} \sum_{k_{1}, k_{2}=0}^{2|n|} \tfrac{1}{2} q^{-2k_{1}-2n} \\
& \qquad \times \Big( ( q^{-2} - 1 - \ln q^{-2}) \left( -(\kappa^{|n|}_{n+1})^{2} \delta_{k_{1}, |n|-n-1} \delta_{k_{2}, |n|-n} + q^{2} (\kappa^{|n|}_{n})^{2} \delta_{k_{1}, |n|-n} \delta_{k_{2}, |n|-n+1} \right) \\
& \qquad \qquad - ( q^{2} - 1 - \ln q^{2}) \left( (\kappa^{|n|}_{n+1})^{2} \delta_{k_{1}, |n|-n} \delta_{k_{2}, |n|-n-1} - q^{2} (\kappa^{|n|}_{n})^{2} \delta_{k_{1}, |n|-n+1} \delta_{k_{2}, |n|-n} \right) \Big).
}

We can reduce the different summations over $k_{1}$ and $k_{2}$ down to two distinct sums, either
\enveqn{
\sum_{k = 0}^{2|n|} \delta_{k, |n|-n - 1} = \delta_{n, -|n|},  \qquad \qquad \text{or} \qquad \qquad
\sum_{k = 0}^{2|n|} \delta_{k, |n|-n + 1} = \delta_{n, |n|}.
}
Hence
\envaln{
&\phi_{2}(Ch_{2}([P_{n}, V])) = \frac{q^{-1} }{2(q^{-1} - q) \ln q^{-1}} \\
& \qquad \times \Big( ( q^{-2} - 1 - \ln q^{-2}) \left( -(\kappa^{|n|}_{n+1})^{2} \delta_{n, -|n|} q^{-2(|n|-n-1)-2n} + q^{2} (\kappa^{|n|}_{n})^{2} \delta_{n, |n|} q^{-2(|n|-n)-2n} \right) \\
& \qquad \qquad - ( q^{2} - 1 - \ln q^{2}) \left( (\kappa^{|n|}_{n+1})^{2} \delta_{n, -|n|} q^{-2(|n|-n)-2n} - q^{2} (\kappa^{|n|}_{n})^{2} \delta_{n, |n|} q^{-2(|n|-n+1)-2n} \right) \Big) \\
&= \frac{q^{-1} }{2(q^{-1} - q) \ln q^{-1}} \\
& \qquad \times \Big( ( q^{-2} - 1 - \ln q^{-2}) \left( -(\kappa^{|n|}_{1-|n|})^{2} \delta_{n, -|n|} q^{-2|n|+2} + q^{2} (\kappa^{|n|}_{|n|})^{2} \delta_{n, |n|} q^{-2|n|} \right) \\
& \qquad \qquad - ( q^{2} - 1 - \ln q^{2}) \left( (\kappa^{|n|}_{1-|n|})^{2} \delta_{n, -|n|} q^{-2|n|} - q^{2} (\kappa^{|n|}_{|n|})^{2} \delta_{n, |n|} q^{-2|n|-2} \right) \Big).
}

Observe that $(\kappa^{|n|}_{1-|n|})^{2} = (\kappa^{|n|}_{|n|})^{2} = [2|n|]_{q}$ as $[1]_{q} = 1$, and so

\envaln{
&\phi_{2}(Ch_{2}([P_{n}, V_n])) = \frac{q^{-1} }{2(q^{-1} - q) \ln q^{-1}} [2|n|]_{q} q^{-2|n|} \\
& \qquad \times \Big( ( q^{-2} - 1 - \ln q^{-2}) q^{2} (\delta_{n, |n|} - \delta_{n, -|n|}) - ( q^{2} - 1 - \ln q^{2}) (\delta_{n, -|n|} - \delta_{n, |n|}) \Big) \\
&= \frac{q^{-2|n|-1} [2|n|]_{q} }{2(q^{-1} - q) \ln q^{-1}} (\delta_{n, |n|} - \delta_{n, -|n|}) \Big( ( q^{-2} - 1 - \ln q^{-2}) q^{2} + ( q^{2} - 1 - \ln q^{2}) \Big) \\
&= \frac{q^{-2|n|-1} [2|n|]_{q} }{2(q^{-1} - q) \ln q^{-1}} (\delta_{n, |n|} - \delta_{n, -|n|}) \Big( - q^{2}\ln q^{-2} - \ln q^{2} \Big) \\
&= \frac{q^{-2|n|-1} [2|n|]_{q} }{2(q^{-1} - q) \ln q^{-1}} (\delta_{n, |n|} - \delta_{n, -|n|}) (1 - q^{2}) \ln q^{-2}  \\
&= q^{-2|n|} [2|n|]_{q} (\delta_{n, |n|} - \delta_{n, -|n|}).
}

Considering $n \neq 0$, then $(\delta_{n, |n|} - \delta_{n, -|n|}) = \mathrm{sgn}(n)$ and 
$\mathrm{sgn}(n)[2|n|]_{q} = [2n]_{q}$. As $[0]_{q} = 0$, then for all $n \in \frac{1}{2} \bZ$ we have

\enveqn{
\phi_{2}(Ch_{2}([P_{n}, V_n])) = q^{-2|n|} [2n]_{q}. \qedhere
}
\end{proof}

We can now write down the index pairing and compute the classical limit as $q \rightarrow 1$.

\begin{thm}
For $N \in \bZ$, the index pairing of the modular spectral triple $(\cB, \cH, \cD, \B(\cH), \Psi_{R})$ with the equivariant projections $P_{N/2}$ is
\enveqn{
\mathrm{Ind}(P_{N/2} (\cD\ox{\rm Id}_{|N| + 1})^{+} P_{N/2}) = q^{-|N|} [N]_{q}.
}
The classical limit of the index as $q \rightarrow 1$ is
\enveqn{
\lim_{q \rightarrow 1} \mathrm{Ind}(P_{N/2} (\cD\ox{\rm Id}_{|N| + 1})^{+} P_{N/2}) = N.
}
\end{thm}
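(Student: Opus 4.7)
The plan is to assemble the pieces that are already in place: the twisted local index formula from Theorem \ref{thm_lif_even}, the explicit Chern character computation in Lemma \ref{lemma_chern_pn}, and the residue calculation in Proposition \ref{prop_phi_p_two}. Since $(\cB,\cH,\cD,\gamma,\B(\cH),\Psi_R)$ is a weakly $QC^\infty$ modular spectral triple of spectral dimension $p=2$ by Lemma \ref{lemma_podles_wop_zero}, the hypotheses of Theorem \ref{thm_lif_even} are satisfied with $M=[(p+1)/2]=1$, so the sum on the right runs only over $m=0$ and $m=2$.

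Setting $n=N/2$ and applying Theorem \ref{thm_lif_even} to the equivariant projection $[P_n,V_n]\in K_0^{\bT}(\cB)$ constructed in Section \ref{subsec:projns}, the index equals
\[
\mathrm{Ind}(P_n(\cD\ox\mathrm{Id}_{2|n|+1})^+P_n) = \phi_0(Ch_0([P_n,V_n])) + \phi_2(Ch_2([P_n,V_n])),
\]
where $\phi_0$ and $\phi_2$ are the residues at $r=-1/2$ of the resolvent cocycle components computed in Section \ref{subsec:head-aches} (the factor $\frac{1}{\sqrt{2\pi i}}$ from the local index formula is absorbed into the definition of $\phi_m$). For the degree-zero term, Lemma \ref{lemma_chern_pn} gives $Ch_0([P_n,V_n]) = q^{2(n-|n|)}I$, which is a scalar multiple of the identity. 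Since $\phi_0$ is linear and $\phi_0(I) = 0$ (as noted in Section \ref{subsec:head-aches} from the observation that $\Tr(\Delta_R^{-1}\gamma |\cD|^{-2r-1}) = 0$ for $\Re(r)$ large), the entire degree-zero contribution vanishes.

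The degree-two contribution is then supplied directly by Proposition \ref{prop_phi_p_two}, which yields $\phi_2(Ch_2([P_n,V_n])) = q^{-2|n|}[2n]_q$. Substituting $n = N/2$ gives $\mathrm{Ind}(P_{N/2}(\cD\ox\mathrm{Id}_{|N|+1})^+P_{N/2}) = q^{-|N|}[N]_q$, which is the first assertion.

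For the classical limit, I use the definition $[N]_q = (q^{-N}-q^N)/(q^{-1}-q)$. Applying l'H\^opital's rule (or expanding the numerator as a finite geometric sum) yields $\lim_{q\to 1}[N]_q = N$, and clearly $\lim_{q\to 1}q^{-|N|} = 1$, so the product converges to $N$. There is no real obstacle in this argument: all the analytic heavy lifting (existence and continuity of the resolvent cocycle in the weakly $QC^\infty$ setting, the Neshveyev--Tuset pseudodifferential calculus, the explicit residue computations on $\cO(SU_q(2))$) has already been carried out in the preceding sections, so this final statement is a bookkeeping consequence.
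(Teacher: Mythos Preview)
Your proposal is correct and follows essentially the same approach as the paper: both arguments observe that the degree-zero term vanishes because $Ch_0([P_{N/2},V_{N/2}])$ is a scalar multiple of $I$ and $\phi_0(I)=0$, then invoke Proposition \ref{prop_phi_p_two} for the degree-two contribution, and finally use the standard limit $[N]_q\to N$ as $q\to 1$. The paper's proof is slightly terser (it cites \cite{KS} for the $q$-number limit rather than sketching it), but the logical structure is identical.
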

\begin{proof}
First, the degree zero contribution is  $\phi_{0}(Ch_{0}([P_{N/2}, V_{N/2}])) = 0$.
This follows from $\phi_{0}(I) = 0$, and from Lemma \ref{lemma_chern_pn}, which gives
$Ch_{0}([P_{N/2}, V_{N/2}]) = q^{(N - |N|)} I$. 
Thus the index pairing is computed just with the degree 2 part, which comes from Proposition \ref{prop_phi_p_two}. To compute the classical limit of the 
index we recall that $\lim_{q \rightarrow 1} [N]_{q} = N$ (see for example \cite{KS}).
\end{proof}

\parskip=0.0in

\end{document}